\DeclareMathOperator*{\argmin}{arg\,min}
\DeclareMathOperator*{\argmax}{arg\,max}
\DeclareMathOperator{\diag}{diag}
\begin{document}
\title{The Safety-Privacy Tradeoff in Linear Bandits} 


\author{%
  \IEEEauthorblockN{Arghavan Zibaie and Spencer Hutchinson and Ramtin Pedarsani and Mahnoosh Alizadeh}
  \IEEEauthorblockA{
                    University of California Santa Barbara\\
                    \{zibaie,shutchinson,ramtin,alizadeh\}@ucsb.edu}


}

\maketitle


\begin{abstract}
We consider a collection of linear stochastic bandit problems, each modeling the random response of different agents to proposed interventions, coupled together by a global safety constraint. We assume a central coordinator must choose actions to play on each bandit with the objective of regret minimization, while also ensuring that the expected response of all agents satisfies the global safety constraints at each round, in spite of uncertainty about the bandits' parameters. The agents consider their observed responses to be private and in order to protect their sensitive information, the data sharing with the central coordinator is performed under local differential privacy (LDP). However, providing higher level of privacy to different agents would have consequences in terms of safety and regret. We formalize these tradeoffs by building on the notion of the sharpness of the safety set - a measure of how the geometric properties of the safe set affects the growth of regret - and propose a unilaterally unimprovable vector of privacy levels for different agents given a maximum regret budget.

\end{abstract}

\theoremseparator{.}
\newtheorem{proposition}{Proposition}
\newtheorem{theorem}{Theorem}
\newtheorem{Corollary}{Corollary}
\newtheorem{lemma}{Lemma}
\newtheorem{Fact}{Fact}
\newtheorem{remark}{Remark}
\newtheorem{assumption}{Assumption}
\newtheorem{definition}{Definition}

\newcommand{\eqdef}{\vcentcolon=}
\newcommand{\beq}{\begin{equation}}
\newcommand{\eeq}{\end{equation}}
\newcommand{\ie}{i.e., }

\section{INTRODUCTION}

The stochastic linear bandit problem constitutes a sequential decision-making problem wherein, at each round, we observe a response to chosen actions which is a perturbed linear function of the action parameterized by an unknown parameter vector. When applying this tool to safety-critical applications such as health care \cite{ijcai2017p389}, power systems and transportation \cite{10457043,chaudhary2022safe}, the decision-making tasks must operate within certain safety constraints that depend on the unknown bandit parameters, and violations of these constraints can result in adverse events. For example, when sequentially learning how to price electricity to manage the demand of users whose price response is unknown, the price setting entity must ensure that the resulting demands do not violate electric distribution system constraints from day one, in spite of its uncertainty about user responses. As a result, variants of the linear stochastic bandit problem with constraints have been studied in the literature, e.g., \cite{amani2019linear,pmlr-v211-hutchinson23a,pacchiano2021stochastic}. 

In many such safety-critical applications, however, a central challenge lies in the fact that users might consider their responses to the central coordinator's interventions to be private information. This can further complicate the task of learning optimal interventions while keeping the system safe at all rounds of the learning process. To formalize this challenge mathematically, we consider a collection of linear stochastic bandit problems whose  responses are tied together through a global safety constraint, coupling what actions can be safely played on each bandit. A central coordinator must choose these actions by observing data on the agents' responses to its past proposed actions. The agents consider their observed response to be sensitive information that they do not want to freely share with the coordinator and as such, a privacy preservation mechanism is needed to allow this data exchange. One form of commonly used privacy guarantee in control systems is Differential Privacy (DP), first introduced in \cite{dwork2006differential,dwork2014algorithmic}. DP guarantees that the probability of any output sequence remains unaffected by the inclusion or exclusion of any single database entry. Local Differential Privacy (LDP), a variant focusing on individual data source privacy before aggregation, was proposed in  \cite{ye2020local},\cite{wang2021local}, allowing each agent to guarantee its privacy locally before sharing their data. Subsequent studies have considered the implications of DP (e.g., \cite{shariff2018differentially, mishra2015nearly, tossou2016algorithms, hanna2024differentially}) and LDP (e.g., \cite{han2021generalized, tao2022optimal ,zheng2020locally,chen2020locally}) within bandit frameworks, highlighting the tradeoff between privacy and performance.

\textit{Contribution}: While previous works deal with the challenge of ensuring \textit{either} safety \textit{or} privacy separately, in a multi agent safety-critical system, the privacy requirements of each agent might have different implications on regret. This is because the difficulty of learning the private parameters of various agents would vary based on how their parameters affect the global system's safety constraints. This paper is focused on formalizing this \emph{privacy-regret tradeoff} in safe learning by building on the notion of \emph{sharpness} introduced in \cite{pmlr-v211-hutchinson23a}, which measures how the geometric properties of the safe action set affect regret, without considering privacy. To study the effect of privacy in this context, we first propose a safe and private linear bandit algorithm which generalizes the Safe-LUCB method of \cite{amani2019linear} to consider agents that apply LDP protocols to their observed responses before sharing it with a central coordinator. The main effect of LDP is that the coordinator has lower confidence about each agent's  parameter, which leads to additional regret. We provide a general regret bound for any polytopic safe set, and  then determine the exact sharpness for the special class of the safe set being a simplex. Then for a specified \textit{regret budget}, we suggest a set of unimprovable privacy levels for different agents.

\subsection{Related Works}
We briefly discuss some related previous works. 
\begin{itemize}
        \item \emph{Bandit problems under safety critical constraints:} In many bandit problems, the decision maker should choose actions that satisfy some constraints.  For example, in \cite{amani2019linear} the decision maker ensures  stage-wise safety, introducing the Safe-LUCB algorithm. \cite{Khezeli_Bitar_2020} guarantees that the observed reward is not below a safe threshold. In \cite{pmlr-v211-hutchinson23a} the notion of sharpness was introduced on a class of safety constraint sets for bandit problems, which studied the cost of safety given the geometry of the constraint set.
        \item \emph{Bandit algorithms with DP and LDP guarantee:} The notion of differential privacy has received attention in various bandit problems. For example, \cite{shariff2018differentially} introduces a notion called joint differential privacy on contextual linear bandits using tree-based algorithms to guarantee the privacy. In \cite{dubey2020differentially}, the DP guarantee was studied for both centralized and decentralized federated contextual linear bandits. In \cite{10423210}, the problem of distributed linear bandit with partial feedback under differential privacy guarantee is studied. Local Differential Privacy (LDP) in bandit problems was studied in \cite{zheng2020locally}, proposing a black box framework which can be adopted by various types of context-free bandit algorithms. 
        In \cite{han2021generalized}, a stochastic gradient descent based algorithm for contextual linear bandit was introduced ensuring LDP protection for both the context and the observed response of the user.
\end{itemize}
\subsection{Notations}
We define $ [T] = \{1,2,\cdots,T\}$. The transpose of a column vector $\mathbf{x} \in \mathbb{R}^d$ is shown by $\mathbf{x}^T = [x_1,x_2,\cdots,x_d]$ where $x_i$ is the i'th component of $\mathbf{x}$ and its Euclidean norm is $||\mathbf{x}||_2$ and for a positive definite $d \times d$ matrix $\mathbf{A}$, $||\mathbf{x}||_{\mathbf{A}} = \sqrt{\mathbf{x}^T\mathbf{A}\mathbf{x}}$. 
The spectral norm of matrix $\mathbf{M}$ is denoted by $||\mathbf{M}||_2$. The covariance matrix of vector $\mathbf{X}$ showed by $ \Sigma(\mathbf{X}) = \mathbb{E}[\mathbf{X}\mathbf{X}^T]$ and its minimum eigenvalue is $\check{\lambda}(\Sigma(\mathbf{X})) \ge 0$. A closed ball is defined with respect to norm $||.||$ as $\Bar{\mathcal{B}}_{||.||}(r) := \{ \mathbf{x}\in \mathbb{R}^M : ||\mathbf{x}|| \leq r \}$ while the open ball is shown by $\mathcal{B}_{||.||}(r)$, where r is the radius of these balls. Also $\mathbf{x}_{1:t}$ is stands for all the elements of the set $\{ \mathbf{x}_1 , \mathbf{x}_2 ,\cdots, \mathbf{x}_t\}$.  

\section{PROBLEM SETUP}
We consider $M$ stochastic linear bandit problems coupled by a global safety constraint that needs to be satisfied at all rounds $t \in [T]$. Each bandit problem can be viewed as an autonomous entity (a.k.a., agent), receiving an action (input) and observing a noisy response (output) at each round. A central coordinator is the decision-maker that simultaneously interacts with all of these bandits, selecting an action for each at every round. Specifically, at each round $t$, a central coordinator chooses an action $\mathbf{x}_{t,m}$ for each bandit $m \in [M]$, respectively from a given compact action set $\mathcal{D}_{m} \subset \mathbb{R}^d$. 
Upon playing action $\mathbf{x}_{t,m}$, agent $m$ observes the noisy response:\begin{equation} \label{observed} y_{t,m} =  \theta_{*,m}^{T}\mathbf{x}_{t,m} + \eta_{t,m},\end{equation} which is the sum of a linear function of the action $\mathbf{x}_{t,m}$ parameterized by an unknown  vector  $\theta_{*,m}$, plus noise $\eta_{t,m}$.

The central modeling assumption of this work is that the bandits  share a global safety constraint. As such, the coordinator’s actions for each one are not chosen independently but must collectively remain in a \textit{globally safe} region.
To mathematically write the global safety constraint that couples the bandit problems $m \in [M]$, we first define some  notation. We define the block-diagonal matrix $\Theta_*$ by placing the row vectors 
$\theta_{*,1}^T, \theta_{*,2}^T, \ldots, \theta_{*,M}^T$ along the diagonal 
blocks:
\[
\Theta_*
= 
\begin{bmatrix}
\theta_{*,1}^T & \mathbf{0}      & \cdots & \mathbf{0}      \\
\mathbf{0}   & \theta_{*,2}^T    & \cdots & \mathbf{0}      \\
\vdots       & \vdots          & \ddots & \vdots          \\
\mathbf{0}   & \mathbf{0}      & \cdots & \theta_{*,M}^T
\end{bmatrix} \in \mathbb{R}^{M \times (Md)},
\]
which we will denote succinctly by 
$\Theta_* = \theta_{*,1}^T \oplus \theta_{*,2}^T \oplus \cdots \oplus \theta_{*,M}^T.$ Furthermore, denote the aggregate decision set as $\mathcal{D}:= \{ \mathbf{X}_t : \mathbf{x}_{t,m} \in \mathcal{D}_{m} \, \forall m \in [M]\}$, where  $\mathbf{X}_t = [\mathbf{x}_{t,1}^T , \mathbf{x}_{t,2}^T , \cdots , \mathbf{x}_{t,M}^T ]^T$
is the concatenated vector of chosen actions at time $t$. 

The following global
safety constraint has to be satisfied at each round $t$:
\begin{equation}\label{safety}
    \Theta_* \mathbf{X}_t \in \mathcal{Y},
\end{equation}
where $\mathcal{Y} \subset \mathbb{R}^M $ is a compact set. Note that given the fact that $\Theta_*$ is unknown, ensuring this requires careful analysis.

\textbf{Central Coordinator's Objective}: The   coordinator wishes to maximize $\sum_{t=1}^T f(\Theta_{*}\mathbf{X}_t)$ for a given function $f: \mathbb{R}^M \rightarrow \mathbb{R}$ of the expected response vector $\Theta_{*}\mathbf{X}_t$, while ensuring that the safety constraint \eqref{safety} is satisfied. However, given the fact that $\Theta_{*}$ is unknown, this goal is instead defined as minimizing the so-called cumulative pseudo-regret $R_T$ over the   $T$ rounds by choosing actions $\mathbf{X}_{t}$, given by: 
\begin{equation}\label{regret}
   R_T =  \sum_{t=1}^T (f(\Theta_{*}\mathbf{X}_*) - f(\Theta_{*}\mathbf{X}_t)).
\end{equation}
Here,  $\mathbf{X}_* = \underset{\Theta_* \mathbf{X} \in \mathcal{Y}}{\argmax}\,f(\Theta_{*}\mathbf{X})$ is the optimal safe action. For brevity, we will refer to cumulative pseudo-regret   as regret. 
 
\textbf{User Privacy}:  The above task is further complicated by the privacy requirements of the agents. Specifically, each agent $m \in [M]$ considers the observed responses $y_{t,m}$ in \eqref{observed} as private information that they do not wish to directly share with the central coordinator (or   they do not trust that the central coordinator will not reveal them to other agents). As such, they are not willing to reveal their observed response $y_{t,m}$ to the central coordinator without a privacy-preservation guarantee. Note that this privacy requirement only applies to the epoch-wise response  $y_{t,m}$ and should not be confused with keeping the unknown bandit parameter $\theta_{*,m}$ private.  

The provided privacy guarantee in this work is based on the notion of Local Differential Privacy (LDP), defined next as a general mechanism on arbitrary data points.

\begin{definition}\label{LDP_def}
    \textbf{(Local Differential Privacy)} A randomized mechanism $\mathcal{A}: \mathcal{Y}\rightarrow\mathcal{U}$, is said to be $(\epsilon,\delta)$-LDP , if for any two input $ y , y' \in \mathcal{Y}$ and for any (measurable) subset $\mathbf{U} \subset \mathcal{U}$, the following inequality holds: 
\begin{equation}
    \mathbf{P}[\mathcal{A}(y) \in \mathbf{U}] \leq e^\epsilon \,\mathbf{P}[\mathcal{A}(y') \in \mathbf{U}] + \delta.
\end{equation}
\end{definition}

LDP is a privacy model that ensures the protection of individual data points by applying a randomized mechanism to each data point before any analysis   \cite{dwork2014algorithmic}\cite{ye2020local}. This means each agent (data point provider) independently applies a privacy-preserving mechanism to their data locally before sharing it. The mechanism is designed such that it makes it hard for adversaries (including the central coordinator) to infer the original value of any specific data point, even if they have unlimited computational resources.
Here (\(\epsilon\),\(\delta\)) serve as controls for the privacy guarantee:

\begin{itemize}
  \item \(\epsilon\)  quantifies the privacy level, with a smaller \(\epsilon\) indicating stronger privacy but potentially less accuracy.
  \item \(\delta\)  allows for a small probability of the mechanism's failure to meet the \(\epsilon\)-privacy guarantee. A purely \(\epsilon\)-LDP mechanism, where \(\delta = 0\), is stronger than one with \(\delta > 0\).
\end{itemize}

One of the commonly used mechanisms which provides an LDP guarantee is the Gaussian mechanism \cite{dwork2014algorithmic}. 

\begin{lemma}\label{LDP_M}For any $y:\mathcal{D} \rightarrow \mathcal{Y} \subset \mathbb{R}^d\,$,  let \newline
$\sigma_{\epsilon,\delta} = \frac{1}{\epsilon}\,\sup_{\mathbf{x,x'}\in\mathcal{D}}||y(\mathbf{x}) - y(\mathbf{x}')||_2\sqrt{2ln(\frac{1.25}{\delta})}$.
The Gaussian mechanism, which adds iid noise drawn from distribution $\mathcal{N}(0,\sigma_{\epsilon,\delta}^2I_d)$ to each $y$ ensures $(\epsilon,\delta)$-LDP protection on $y$.
\end{lemma}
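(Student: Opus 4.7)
The plan is to follow the classical argument of Dwork and Roth for the Gaussian mechanism. First I would introduce the $L_2$ sensitivity $\Delta_2 := \sup_{\mathbf{x},\mathbf{x}'\in\mathcal{D}}\|y(\mathbf{x})-y(\mathbf{x}')\|_2$, so that by hypothesis $\sigma_{\epsilon,\delta} = \Delta_2\sqrt{2\ln(1.25/\delta)}/\epsilon$. Fix any two inputs $\mathbf{x},\mathbf{x}'\in\mathcal{D}$, set $\mu = y(\mathbf{x})$ and $\mu' = y(\mathbf{x}')$, and write $p$ and $p'$ for the densities of $\mathcal{A}(\mathbf{x}) = \mu + Z$ and $\mathcal{A}(\mathbf{x}') = \mu' + Z$, where $Z\sim \mathcal{N}(0,\sigma_{\epsilon,\delta}^2 I_d)$. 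Since both are translations of the same isotropic Gaussian, it suffices to show that the privacy-loss random variable $L := \ln(p(\mathcal{A}(\mathbf{x}))/p'(\mathcal{A}(\mathbf{x})))$ exceeds $\epsilon$ with probability at most $\delta$.

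The key reduction is that $L$ is itself Gaussian. A direct computation with the explicit density of a multivariate normal yields
\begin{equation*}
L = \frac{1}{\sigma_{\epsilon,\delta}^2}\langle Z,\,\mu'-\mu\rangle + \frac{\|\mu-\mu'\|_2^2}{2\sigma_{\epsilon,\delta}^2},
\end{equation*}
so that $L \sim \mathcal{N}\!\bigl(\|\mu-\mu'\|_2^2/(2\sigma_{\epsilon,\delta}^2),\,\|\mu-\mu'\|_2^2/\sigma_{\epsilon,\delta}^2\bigr)$. By the definition of $\Delta_2$, the worst case is $\|\mu-\mu'\|_2 = \Delta_2$, which pins down the mean and variance of $L$ in terms of $\sigma_{\epsilon,\delta}$ and $\epsilon$ alone.

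Next I would apply a Gaussian tail bound to $L$ to obtain $\Pr[L > \epsilon]\le \delta$. The main obstacle is precisely squeezing out the sharp constant $1.25$ inside the logarithm: after standardizing $L$ to a $\mathcal{N}(0,1)$ variable $N$, one must show $\Pr[N > t] \le \delta$ for $t = c - \epsilon/(2c)$ with $c = \sqrt{2\ln(1.25/\delta)}$. A crude Mills-ratio bound is insufficient, so I would use $\int_t^\infty e^{-u^2/2}du \le (1/t)e^{-t^2/2}$ together with the expansion $(c-\epsilon/(2c))^2 = c^2 - \epsilon + O(\epsilon^2/c^2)$; for $\epsilon \in (0,1]$, the excess factor $e^{\epsilon/2}/(\sqrt{2\pi}\,t)$ is absorbed by the ratio $1/1.25$, which is what forces that particular constant in the definition of $\sigma_{\epsilon,\delta}$.

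Finally I would conclude the $(\epsilon,\delta)$-LDP guarantee by the standard measure-decomposition argument. For any measurable $U\subset\mathcal{U}$,
\begin{align*}
\Pr[\mathcal{A}(\mathbf{x})\in U]
&= \Pr[\mathcal{A}(\mathbf{x})\in U,\,L\le\epsilon] + \Pr[\mathcal{A}(\mathbf{x})\in U,\,L>\epsilon] \\
&\le e^\epsilon\,\Pr[\mathcal{A}(\mathbf{x}')\in U] + \delta,
\end{align*}
where the first term uses the pointwise bound $p(z)\le e^\epsilon p'(z)$ on the event $\{L\le\epsilon\}$ and the second uses the tail estimate from the previous step. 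Since $\mathbf{x},\mathbf{x}'$ were arbitrary, this is exactly the $(\epsilon,\delta)$-LDP condition in Definition~\ref{LDP_def}.
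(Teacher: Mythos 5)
Your proof is correct, and it is essentially the paper's own approach: the paper never proves this lemma but imports it by citation from \cite{dwork2014algorithmic}, whose Appendix~A argument is exactly what you reconstruct --- the Gaussian privacy-loss random variable, the reduction of its tail to a standard normal bound at $t = c - \epsilon/(2c)$ with $c=\sqrt{2\ln(1.25/\delta)}$, and the final measure decomposition. The one point worth flagging is that this classical analysis is valid only for $\epsilon \in (0,1]$ (as you note), a hypothesis stated in Dwork--Roth's theorem but silently omitted from the paper's lemma; for sufficiently large $\epsilon$ the prescribed noise scale $\sigma_{\epsilon,\delta} \propto 1/\epsilon$ is genuinely too small to give $(\epsilon,\delta)$-LDP, so your restriction is a necessary hypothesis rather than an artifact of the proof technique.
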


  The question we study is whether we can carefully tailor the privacy levels afforded to   agents while  maintaining a given regret bound. This allows us to \textit{quantify  the  trade-off between individual privacy needs and important collective operational objectives such as safety}. To allow this study, we denote the privacy parameter for bandit \(m\)  as \((\epsilon_m,\delta_m)\). Consequently, in each round, rather than directly sharing the observed response \(y_{t,m}\), each agent independently samples a noise \(h_{t,m} \sim \mathcal{N}(0,\sigma_{m}^2)\) and adds it to the observed response \(y_{t,m}\) to create a protected version $u_{t,m} = y_{t,m} + h_{t,m}$.

\subsection{Assumptions}
Let $\mathcal{F}_{t-1} = \sigma(\mathbf{x}_{1:t},\eta_{1:t-1 , 1:M},h_{1:t-1 , 1:M})$ be the $\sigma$-algebra at round t. The following standard assumptions will be used.

\begin{assumption}
\label{A1}
(Sub-Gaussian Noise) For  $\forall m \in [M]$ and $\forall t \in [T]$,  $\eta_{t,m}$ is conditionally zero-mean R-sub-Gaussian with a fixed constant $R \geq 0 $, i.e., $\mathbb{E}[\eta_{t,m}|\mathcal{F}_{t-1}] = 0$ and $\mathbb{E}[e^{\lambda\eta_{t,m}}|\mathcal{F}_{t-1}] \leq exp(\lambda^2\mathit{R}^2/2),  \,\,\, \forall \lambda \in \mathbb{R}$.
\end{assumption}

\begin{assumption}
\label{A3}
(Lipschitz) For all $y,y'\,\, in\,\,\mathcal{Y} $, the function f is $L-lipschitz $ on $ \mathcal{Y} $ such that $|f(y)-f(y')|\leq L||y-y'||_2$.
\end{assumption}

\begin{assumption}
\label{A2}
(Boundedness) There exist positive constants $S$ and $K$ such that  $\forall m \in [M]$ and $\forall t \in [T]$,  $||\theta_{*,m}||_2\leq S$ and $||\mathbf{x}_m||_2\leq K$ for all $\mathbf{x}_m \in \mathcal{D}_{m}$. Also $y_{t,m} \in [-1,1]$. 
\end{assumption}

For brevity of presentation, we also assume in the following that the decision set $\mathcal{D}$ is not restrictive.

\begin{assumption}
\label{A5}
The safe action set $\mathcal{D}_s$ is a subset of $\mathcal{D}$:
\begin{equation}\label{ds}
    \mathcal{D}_s := \{\mathbf{X} : \Theta_*\mathbf{X} \in \mathcal{Y}\} \subseteq \mathcal{D}
\end{equation}
\end{assumption}

It follows from Assumption \ref{A2} that $\Theta_*$ is initially known to be in the set,
\begin{equation*}
    \mathcal{Q} := \{\Theta \in \mathbb{R}^{M \times Md} : ||\theta_m||_2 \leq S \, , \forall m \in [M]\},
\end{equation*}
where we consider an aggregate parameter of the form \begin{equation}\label{ag_par}
    \Theta = \theta_{1}^T \oplus \theta_{2}^T \oplus \cdots \oplus \theta_{M}^T.
\end{equation}
Therefore, it is initially known that 
\begin{equation}\label{1}
    \mathcal{D}_0 :=\{ \mathbf{X} \in \mathcal{D}: \Theta\mathbf{X} \in \mathcal{Y}, \forall \Theta \in \mathcal{Q}\}
\end{equation}
is contained in $\mathcal{D}_s$.
We assume $\mathcal{D}_0$ has a nonempty interior.

\begin{assumption}
\label{A4}

The decision set $\mathcal{D}_0$ is a compact set with nonempty interior such that there exists $\mathbf{v} \in \mathbb{R}^{Md}$ and an open ball $\mathcal{B}_{2}(r)$ for which $\mathbf{v} + \mathcal{B}_{2}(r)$ is a subset of $\mathcal{D}_0$.
\end{assumption}

\section{SAFE-PRIVATE LIN-UCB}

Our algorithm for private and safe bandits is built on the Upper Confidence Bound (UCB) framework. The privacy level of each agent $m$ is an input to the algorithm, and  we will optimize it later to achieve a given regret bound while respecting the safety constraints.

We provide a high level description of the algorithm first, which proceeds in two phases. During the initial pure exploration phase, for a total number of $T'$ rounds, the central coordinator  selects actions   uniformly at random from a closed ball within the set of \textit{known safe actions} $\mathcal{D}_0$ in \eqref{1}. 
Then, it receives private versions of the observed rewards, which are subjected to Gaussian noise, and retains a record of all observed action-reward pairs. Subsequently, the Exploration-Exploitation phase is initiated. For each round $t \geq T'$, the central coordinator uses the data it has accumulated  to create a regularized least square estimate of all $\theta_m$'s, denoted by $\hat{\theta}_{t,m}$, along with a high probability confidence set $C_{t}$ to which {$\Theta_*$} belongs. These confidence sets are used to form an inner-approximation of the set of safe actions $\mathcal D_s$. The UCB rule is   employed to select an action from these inner-approximations.

\begin{algorithm}[h]
\caption{Safe-Private Lin-UCB}\label{algo}
\begin{algorithmic}[1]
\State \textbf{Inputs}: $T$, $T'$, $\mathcal{Y}$, $\mathcal{D}$, $\nu$, $S$, $K$, $R$, $\delta'$, $\delta$, $\sigma$,  $\epsilon_m$and $\epsilon$ 
\State \textbf{Pure Exploration}
    \For{\texttt{$t = 1,2,\cdots,T' $}}
    \State \textit{Central Coordinator (CC) Side:} 
        \State Randomly choose $\mathbf{X}_t \in \mathcal{D}_0 $(defined in \ref{1}) such that \eqref{random1} holds and send $\mathbf{x}_{t,m} $ to agent $m \in [M]$
        \State \textit{Agents Side:}
        \For{\texttt{$m = 1,2,\cdots,M $}}
            \State Take action $\mathbf{x}_{t,m} $ and send $ u_{t,m} = y_{t,m} + h_{t,m}$ ($h_{t,m} \sim \mathcal{N}(0,\alpha_m^2\sigma^2)$)  back to CC with $\alpha_m = \frac{\epsilon}{\epsilon_m}$. 
        \EndFor
    \EndFor

\State \textbf{Exploration - Exploitation} 
    \For{\texttt{$t = T'+1,T'+2,\cdots,T $}}
    \State \textit{Central Coordinator Side:}
        \For{\texttt{$m = 1,2,\cdots,M $}}
            \State Set $\hat{\theta}_{t,m}$(defined in\eqref{THh}).
        \EndFor
        \State Set $\mathcal{C}_{t}$ (defined in \ref{C-set-def}) and $\mathcal{D}_{t} $ (defined in \ref{safe-set-def})
            \State Choose $(\mathbf{X}_{t}, \Tilde{\Theta}_t)= \underset{ (\mathbf{X},\Theta) \in \mathcal{D}_{t} \times \mathcal{C}_{t}}{\argmax}\,f(\Theta\mathbf{X})$ and send $\mathbf{x}_{t,m}$ to agent m for the next round.
        \State \textit{Agents Side:}
            \For{\texttt{$m = 1,2,\cdots,M $}}
                \State Take action $\mathbf{x}_{t,m} $ and send $ u_{t,m} = y_{t,m} + h_{t,m}$ ($h_{t,m} \sim \mathcal{N}(0,\alpha_m^2\sigma^2)$)  back to CC with $\alpha_m = \frac{\epsilon}{\epsilon_m}$. 
            \EndFor
    \EndFor
\end{algorithmic}
\end{algorithm}

\subsection{LDP Guarantee}
Based on Assumption \ref{A2}, $y_{t,m} \in [-1,1]$. Using Lemma \ref{LDP_M}, let $\sigma = \frac{2}{\epsilon}\sqrt{2\ln(\frac{1.25}{\delta})}$ for some $\epsilon>0$ and $\delta>0$. Then let $\alpha_m \in \mathbb{R}$ be the parameter corresponding to various level of privacy for different bandits such that $\sigma_{m} = \alpha_m\sigma$ and
\begin{equation}\label{privacy_level}
    \alpha_m = \frac{\epsilon}{\epsilon_m}.
\end{equation}
For brevity, we will assume that ${\delta}$ is the same for all bandits. After playing action $\mathbf{x}_{t,m}$, each bandit adds the noise $ 
    h_{t,m} \overset{\mathrm{iid}}{\sim} \mathcal{N}(0,\alpha_m^2\sigma^2)$
to its observed response $y_{t,m}$ and sends the perturbed response $u_{t,m}$ to the coordinator. Thus, there are $M$ separate  mechanisms, each respectively protecting separate $y_{t,m}$ at time  $t$. One can then apply Lemma \ref{LDP_M} independently to each agent to obtain the following LDP result.

\begin{Fact}\label{LDP-Algo}
     At any single time $t \in [T]$, Algorithm \ref{algo} is $(\epsilon_m,\delta)$-LDP for the agents' responses $y_{t,m}, ~ m\in [M]$.
\end{Fact}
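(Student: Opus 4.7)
The plan is to show that the claim reduces to a direct application of Lemma \ref{LDP_M} once the noise standard deviation actually used by agent $m$ is compared against the sufficient magnitude prescribed by the Gaussian mechanism. The proof proceeds in three short steps.

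First, I would fix a round $t$ and a single agent $m$ and identify the sensitivity of the raw response $y_{t,m}$. By Assumption \ref{A2}, $y_{t,m} \in [-1,1]$, so for any two possible values $y,y' \in [-1,1]$ one has $\|y-y'\|_2 \le 2$. Hence, following the notation of Lemma \ref{LDP_M} and targeting privacy level $(\epsilon_m,\delta)$, it suffices to add Gaussian noise with standard deviation at least
\[
\sigma_{\epsilon_m,\delta} \;=\; \frac{2}{\epsilon_m}\sqrt{2\ln\!\Bigl(\frac{1.25}{\delta}\Bigr)}.
\]

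Second, I would match this requirement to the noise that Algorithm \ref{algo} actually injects. The algorithm samples $h_{t,m} \sim \mathcal{N}(0,\alpha_m^2 \sigma^2)$ with $\sigma = \tfrac{2}{\epsilon}\sqrt{2\ln(1.25/\delta)}$ and $\alpha_m = \epsilon/\epsilon_m$. Substituting gives
\[
\alpha_m\,\sigma \;=\; \frac{\epsilon}{\epsilon_m}\cdot\frac{2}{\epsilon}\sqrt{2\ln\!\Bigl(\frac{1.25}{\delta}\Bigr)} \;=\; \sigma_{\epsilon_m,\delta},
\]
so the noise standard deviation coincides exactly with the threshold of Lemma \ref{LDP_M}. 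Applying that lemma to the mechanism $\mathcal{A}_m(y_{t,m}) = y_{t,m} + h_{t,m} = u_{t,m}$ yields $(\epsilon_m,\delta)$-LDP for the message released by agent $m$ at round $t$.

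Finally, I would note that the per-round release for each agent is obtained by an independent Gaussian draw that depends only on $y_{t,m}$, so each agent's mechanism enjoys its own guarantee without interference from the others; the statement is about a single round $t$ and a per-agent guarantee, so no composition across time or across agents is required. I do not anticipate a real obstacle here; the only thing to be careful about is the sensitivity bookkeeping, namely that $y_{t,m} \in [-1,1]$ yields the factor of $2$ and that the definition of $\alpha_m$ was designed precisely so the Gaussian-mechanism inequality is tight for each individual privacy budget $\epsilon_m$.
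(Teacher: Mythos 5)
Your proposal is correct and follows essentially the same route as the paper: the sensitivity bound $\sup_{y,y'}\lvert y-y'\rvert\le 2$ from $y_{t,m}\in[-1,1]$, the calibration $\alpha_m\sigma=\frac{\epsilon}{\epsilon_m}\cdot\frac{2}{\epsilon}\sqrt{2\ln(1.25/\delta)}=\frac{2}{\epsilon_m}\sqrt{2\ln(1.25/\delta)}$, and a per-agent application of Lemma \ref{LDP_M} are exactly the paper's argument. The only ingredient the paper adds is the post-processing property (Lemma \ref{post-processing}), which ensures that the coordinator's downstream computations on $u_{t,m}$ cannot degrade the per-agent guarantee---a one-line step your argument implicitly relies on when asserting that the whole algorithm, and not merely the release of $u_{t,m}$, is $(\epsilon_m,\delta)$-LDP.
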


   \subsection{Pure Exploration}\label{pure_exp}
The pure exploration phase (explained in detail in Appendix \ref{Random_Act})  is run for a total of $T'$ rounds. As discussed in \cite{amani2019linear}, the proper choice of $T'$ helps to control the growth of regret by establishing a positive lower bound on $\check{\lambda}$, the minimum eigenvalue of the covariance matrix $\mathbb{E}[\mathbf{x}_{t,m}\mathbf{x}_{t,m}^T]$. Specifically, for $t > T'$, for all agents $m \in [M]$: 
\begin{equation}\label{random1}
    \check{\lambda}(\Sigma(\mathbf{x}_{t,m})) > \frac{r^2}{4Md} > 0 .
\end{equation}



 
\subsection{Exploration - Exploitation }\label{exp-exp}
Since this algorithm is a UCB based algorithm, we implement the OFU (optimism in the face of uncertainty) principle. 
For rounds $ t = T'+1 ,T'+2,\cdots, T$, the cental coordinator first obtains a regularized least-squares estimate of $\theta_{*,m}$,  named $\hat{\theta}_m$, with the regularizer $\nu > 0$ as follow: 
\begin{equation}\label{THh}
    \hat{\theta}_{t,m} =\mathbf{G}_{t,m}^{-1}\mathbf{g}_{t,m},
\end{equation}
where $\mathbf{G}_{t,m}\!=\!\sum\limits_{i=1}^{t-1} (\mathbf{x}_{i,m}\mathbf{x}_{i,m}^T)\!+\!\nu I,$ and $\mathbf{g}_{t,m}\!=\!\sum\limits_{i=1}^{t-1}\!(u_{i,m}\mathbf{x}_{i,m})$.


The central coordinator then uses $\hat{\theta}_{t,m}$ to build a confidence set $\mathcal{C}_{t}$   based on the following theorem.

\begin{theorem}\label{C-set}
 (According to Theorem 2 in \cite{abbasi2011improved}) Let Assumptions 1 and 2 hold. Then for any $\delta' \in (0,1/2)$, and for $\Theta$  defined in \eqref{ag_par}, it holds that for all $t \ge 0$, $\Theta_*$ lies in the set
 \begin{equation}\label{C-set-def}
        \mathcal{C}_{t} = \{ \Theta \in \mathbb{R}^{M \times Md} : ||\theta_m - \hat{\theta}_{t,m}||_{\mathbf{G}_{t,m}} \leq \beta_{t,m} \forall m \in [M] \}, \end{equation}
with probability of $1-\delta'$, where 
 \begin{equation}\label{beta}
     \beta_{t,m} := (\sqrt{R^2 +\alpha_m^2\sigma^2})\sqrt{d\log{( \frac{1+ tK^2/\nu}{\delta'/M})}} + S\sqrt{\nu}.
\end{equation}
\end{theorem}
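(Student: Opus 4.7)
The plan is to reduce the claim to a direct application of Theorem 2 of Abbasi-Yadkori et al.\ (2011) applied separately to each bandit $m$, with a union bound to aggregate the failure probabilities, after verifying that the effective noise in our LDP setting still satisfies the required sub-Gaussianity property.

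First, I would rewrite the privatized observation as
\[
u_{t,m} = \theta_{*,m}^{T}\mathbf{x}_{t,m} + \xi_{t,m}, \qquad \xi_{t,m} := \eta_{t,m} + h_{t,m},
\]
and argue that for each fixed $m$, the sequence $\{\xi_{t,m}\}_{t\ge 1}$ is conditionally $\sqrt{R^2+\alpha_m^2\sigma^2}$-sub-Gaussian with respect to $\mathcal{F}_{t-1}$. By Assumption \ref{A1}, $\eta_{t,m}$ is conditionally zero-mean $R$-sub-Gaussian, and by construction $h_{t,m}\sim\mathcal{N}(0,\alpha_m^2\sigma^2)$ is independent of $\mathcal{F}_{t-1}$ and of $\eta_{t,m}$, hence $\alpha_m\sigma$-sub-Gaussian. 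Combining the moment generating functions of two independent conditionally sub-Gaussian variables gives
\[
\mathbb{E}\!\left[e^{\lambda\xi_{t,m}}\mid\mathcal{F}_{t-1}\right] \le \exp\!\left(\lambda^2 (R^2+\alpha_m^2\sigma^2)/2\right),
\]
which is exactly the conditional sub-Gaussian property with parameter $\sqrt{R^2+\alpha_m^2\sigma^2}$.

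Next, I would note that $\mathbf{x}_{t,m}$ is $\mathcal{F}_{t-1}$-measurable by the algorithm's construction (actions depend only on past data and, in the pure exploration phase, on external randomness that can be absorbed into the filtration). Consequently, the hypotheses of Theorem 2 in \cite{abbasi2011improved} are satisfied for each agent $m$ separately, with effective sub-Gaussian parameter $\sqrt{R^2+\alpha_m^2\sigma^2}$, regularizer $\nu$, and norm bound $\|\theta_{*,m}\|_2\le S$ (Assumption \ref{A2}). Applying that theorem with confidence parameter $\delta'/M$ yields that, with probability at least $1-\delta'/M$, for all $t\ge 0$,
\[
\|\theta_{*,m}-\hat{\theta}_{t,m}\|_{\mathbf{G}_{t,m}} \le \sqrt{R^2+\alpha_m^2\sigma^2}\sqrt{d\log\!\left(\tfrac{1+tK^2/\nu}{\delta'/M}\right)} + S\sqrt{\nu} = \beta_{t,m}.
\]

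Finally, a union bound over $m\in[M]$ shows that simultaneously for all $m$ the above inequality holds with probability at least $1-\delta'$, so $\Theta_*\in\mathcal{C}_t$ for all $t\ge 0$ with probability $1-\delta'$. The only non-routine step is verifying the sub-Gaussianity of the combined noise $\eta_{t,m}+h_{t,m}$ with the claimed parameter and confirming that the adaptivity/filtration assumptions still hold with the injected privacy noise; the rest is a direct quotation of the self-normalized concentration result.
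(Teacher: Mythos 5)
Your proposal is correct and follows essentially the same route as the paper's own proof: the paper likewise defines the combined noise $w_{t,m}=\eta_{t,m}+h_{t,m}$, proves it is conditionally zero-mean $\sqrt{R^2+\alpha_m^2\sigma^2}$-sub-Gaussian via the product of moment generating functions, and then applies Theorem 2 of \cite{abbasi2011improved} to each agent with per-agent confidence $\delta'/M$ before aggregating over $m\in[M]$. Your explicit union-bound step is, if anything, slightly cleaner than the paper's appeal to the per-agent bounds holding ``independently.''
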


Note that the Gaussian noise added to the observed response $y_{t,m}$ results in a larger lower bound on the confidence intervals $\beta_{t,m}$ compared to non-private bandits.
 
To ensure satisfaction of the safety constraints, the central coordinator then forms a   conservative safe decision set $\mathcal{D}_t$:
\begin{equation}\label{safe-set-def}
    \mathcal{D}_t :=\{ \mathbf{X} \in \mathcal{D} : \Theta\mathbf{X} \in \mathcal{Y}, \forall \Theta \in \mathcal{C}_{t}\}.
\end{equation}
Note that $\mathcal{D}_t$ is a conservative inner-approximation of the true set of safe actions $\mathcal D_s$ since the actions in $\mathcal{D}_t$ are safe for all $\Theta \in \mathcal{C}_{t}$. Given that $\Theta_* \in \mathcal{C}_{t}$ with high probability over the run of the algorithm,  actions in $\mathcal{D}_t$ are safe with same probability. 

Lastly, the central coordinator applies OFU to choose the action vector  $\mathbf{X}_{t}$ by solving $(\mathbf{X}_{t} , \Tilde{\Theta}_{t} )= \underset{ (\mathbf{X},\Theta) \in \mathcal{D}_{t} \times \mathcal{C}_{t}}{\argmax}\,f(\Theta\mathbf{X})$ and sends $\mathbf{x}_{t,m}$ back to each agent $m$ for the next round.

\section{Regret Analysis}

Our goal is to bound the regret $R_T$ in \eqref{regret}. To do so, we focus on polytopic safe decision  $
     \mathcal{Y} = \{ \mathbf{x} \in \mathbb{R}^{M}: \mathbf{A}\mathbf{x} \leq \mathbf{b}\}$,
where the matrix $\mathbf{A} = [\mathbf{a}_1,\mathbf{a}_2,\cdots ,\mathbf{a}_P]^T$ and the vector $ \mathbf{b} = [b_1,b_2,\cdots,b_P]^T$ form a set of $P$ non-redundant constraints that must be satisfied by all vectors $\mathbf{x}$ that are safe.


For analysis purposes we first transform the safe set $\mathcal{Y}$ with the positive definite, diagonal transformation matrix 
\begin{equation}\label{transform}
    \mathbf{B}  = \diag\left(\begin{bmatrix}
  \frac{1}{\beta_{T,1}}&  \frac{1}{\beta_{T,2}}& \cdots &\frac{1}{\beta_{T,M}}
\end{bmatrix}\right),
\end{equation}
which allows us to normalize the confidence interval for all the agents.
We define this transformed (scaled) safe set as:
\begin{equation*}
\begin{split}
    \mathcal{Y}' = \mathbf{B}\mathcal{Y} & = \{\mathbf{B}\mathbf{x} \in \mathbb{R}^{M}:\mathbf{A}\mathbf{x} \leq \mathbf{b}\} =  \{\mathbf{x} \in \mathbb{R}^{M}:\mathbf{A}'\mathbf{x} \leq \mathbf{b}\}, 
\end{split}
\end{equation*}
where   $\mathbf{A}' = \mathbf{A}\mathbf{B}^{-1}$.



Now, we use the notion of \emph{sharpness} of a set from \cite{pmlr-v211-hutchinson23a} to characterize how uncertainty about the bandit parameters $\Theta_*$, will affect the geometry of the conservative safe decision set $\mathcal D_t$ in \eqref{safe-set-def} and as a result affect the regret. This notation will be used to bound the distance between the optimal expected response $\Theta_* \mathbf{X}_*$, which lies in $\mathcal Y$, and the shrunk version of the safe set (Definition \ref{shrunk_set}), within which the algorithm requires the expected responses $\Theta \mathbf{X}_t$ to lie due to uncertainty.
\begin{definition}\label{shrunk_set}
    For a compact set $\mathcal{Y}\subset\mathbb{R}^M $, norm $||.||$ and a non-negative scalar $\Delta$, a \textbf{shrunk version} of  $\mathcal{Y}$ defined as $ \mathcal{Y}_{\Delta}^{||.||} := \{x \in \mathcal{Y} : x+v \in \mathcal{Y}, \forall v \in \Bar{\mathcal{B}}_{||.||}(\Delta)\}$.
\end{definition}

\begin{definition}\label{max_shrinkage}
    For a compact set $\mathcal{Y}\subset\mathbb{R}^M $ and norm$||.||$, the \textbf{maximum shrinkage} of $\mathcal{Y}$ is $ \mathcal{H}_{\mathcal{Y}}^{||.||} := \sup\{\Delta : \mathcal{Y}_{\Delta}^{||.||} \neq \emptyset \}$.
\end{definition}

\begin{definition}\label{sharpness}
    For a compact set $\mathcal{Y} \subset \mathbb{R}^M $ and norm $||.||$, the \textbf{sharpness} of $\mathcal{Y}$ is defined as $\mbox{Sharp}_{\mathcal{Y}}^{||.||} (\Delta) := \underset{x \in \mathcal{Y}}{\sup}\underset{y \in \mathcal{Y}_{\Delta}^{||.||}}{\inf} ||y - x||_2$ ,
    for all non-negative $\Delta$ such that $\mathcal{Y}_{\Delta}^{||.||} $ is nonempty.
\end{definition}
The next theorem will state our general regret results for polytopic safe sets.A detailed proof of Theorem \ref{main_2} is provided in Appendix\ref{Appen3}.

\begin{theorem}\label{main_2}
    Let all the Assumptions \ref{A1}-\ref{A4} hold. With probability at least $1-2\delta'$ the regret of Algorithm \ref{algo} is bounded~as:  
    \begin{equation}\nonumber
        \begin{split}
            R_T & \leq 2LKST'\sqrt{M}n +  L\tilde{\beta}_T(T-T')\mbox{Sharp}_{\mathcal{Y'}}^{\infty}\left(\!\frac{2\sqrt{2}K}{\sqrt{2\nu + T'\check{\lambda}}}\!\right)\\
            & +L\max({\mathcal{H}_{\mathcal{Y'}}^{\infty}}, 2) \\ &  \times \sqrt{2d\log\left(1 + \frac{TK^2}{d\nu}\right)(T-T')\sum_{m \in [M]}\beta_{T,m}^2},
        \end{split}
    \end{equation}
for any $T'\ge \max(t_{\delta'} , t'_h)$, where $t_{\delta'} := \frac{8K^2}{\check{\lambda}}log(\frac{d}{\delta'})$ , $\,t'_h:=\frac{8K^2}{\check{\lambda}(\mathcal{H}_{\mathcal{Y}'}^\infty)^2}-\frac{2\nu}{\check{\lambda}}$ and $\tilde{\beta}_T = \underset{m \in [M]}{\max\beta_{T,m} }$.
\end{theorem}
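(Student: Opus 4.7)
The plan is to split the total regret into the pure-exploration phase ($t\le T'$) and the exploration-exploitation phase ($t>T'$), and to bound each separately under two high-probability good events. For $t\le T'$, boundedness (Assumption~\ref{A2}) gives $\|\Theta_*(\mathbf{X}_*-\mathbf{X}_t)\|_2\le 2SK\sqrt{M}$, so Lipschitzness (Assumption~\ref{A3}) immediately yields the first summand $2LKST'\sqrt{M}$ after summing over $t=1,\ldots,T'$ (the stray $n$ in the statement appears to be a typographical artifact).

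\textbf{Good events.} I would condition on (i) $\Theta_*\in\mathcal{C}_t$ for every $t$, which holds with probability $\ge 1-\delta'$ by Theorem~\ref{C-set}, and (ii) for $T'\ge t_{\delta'}$ the Gram matrix obeys $\mathbf{G}_{t,m}\succeq(\nu+T'\check{\lambda}/2)I$ for every $m$ and $t>T'$, obtained from \eqref{random1} by a matrix Chernoff argument applied to the i.i.d.\ actions drawn from $\mathbf{v}+\mathcal{B}_{2}(r)$ during pure exploration (probability $\ge 1-\delta'$). A union bound delivers the claimed $1-2\delta'$. On event~(ii), every $\mathbf{x}$ with $\|\mathbf{x}\|_2\le K$ satisfies $\|\mathbf{x}\|_{\mathbf{G}_{t,m}^{-1}}\le \sqrt{2}K/\sqrt{2\nu+T'\check{\lambda}}$, which drives the shrinkage parameter below.

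\textbf{Regret decomposition and the safety cost (main obstacle).} For $t>T'$ introduce a surrogate action $\mathbf{Z}_t\in\mathcal{D}_t$ and decompose the instantaneous regret as $[f(\Theta_*\mathbf{X}_*)-f(\Theta_*\mathbf{Z}_t)]+[f(\Theta_*\mathbf{Z}_t)-f(\tilde{\Theta}_t\mathbf{X}_t)]+[f(\tilde{\Theta}_t\mathbf{X}_t)-f(\Theta_*\mathbf{X}_t)]$. The middle bracket is nonpositive by the OFU rule since $(\mathbf{Z}_t,\Theta_*)\in\mathcal{D}_t\times\mathcal{C}_t$. The first bracket is the \emph{safety cost} and is the main technical obstacle; I would bound it via the following claim: any $\mathbf{y}\in\mathcal{Y}$ whose scaled image $\mathbf{B}\mathbf{y}$ lies in the $\Delta$-shrunk version of $\mathcal{Y}'$ with $\Delta=2\sqrt{2}K/\sqrt{2\nu+T'\check{\lambda}}$ is realized by some $\mathbf{Z}\in\mathcal{D}_t$ satisfying $\Theta_*\mathbf{Z}=\mathbf{y}$. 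To see this, the diameter of $\{\Theta\mathbf{X}:\Theta\in\mathcal{C}_t\}$ in coordinate $m$ is at most $2\beta_{T,m}\|\mathbf{x}_m\|_{\mathbf{G}_{t,m}^{-1}}$, which becomes $2\|\mathbf{x}_m\|_{\mathbf{G}_{t,m}^{-1}}\le\Delta$ after rescaling each coordinate by $1/\beta_{T,m}$ through $\mathbf{B}$. Choosing $\mathbf{Z}_t$ so that $\mathbf{B}\Theta_*\mathbf{Z}_t$ attains the sharpness infimum for $\mathbf{B}\Theta_*\mathbf{X}_*$ and using $\|\mathbf{B}^{-1}\|_2=\tilde{\beta}_T$ together with Lipschitzness then produces $L\tilde{\beta}_T\mbox{Sharp}_{\mathcal{Y}'}^{\infty}(\Delta)$ per round. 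The assumption $T'\ge t'_h$ is exactly what guarantees $\Delta\le\mathcal{H}_{\mathcal{Y}'}^{\infty}$ so that the shrunk set is nonempty and $\mathbf{Z}_t$ exists.

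\textbf{Exploration cost.} The third bracket is handled by standard LinUCB machinery: since $\tilde{\Theta}_t,\Theta_*\in\mathcal{C}_t$, the triangle inequality gives $\|\tilde{\theta}_{t,m}-\theta_{*,m}\|_{\mathbf{G}_{t,m}}\le 2\beta_{T,m}$, so Cauchy--Schwarz across agents produces $\|(\tilde{\Theta}_t-\Theta_*)\mathbf{X}_t\|_2\le 2\sqrt{\sum_{m\in[M]}\beta_{T,m}^2\|\mathbf{x}_{t,m}\|_{\mathbf{G}_{t,m}^{-1}}^2}$. A further Cauchy--Schwarz across $t$ and the elliptical potential lemma $\sum_{t=T'+1}^T\|\mathbf{x}_{t,m}\|_{\mathbf{G}_{t,m}^{-1}}^2\le 2d\log(1+TK^2/(d\nu))$ yield the third summand. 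The prefactor $\max(\mathcal{H}_{\mathcal{Y}'}^{\infty},2)$ arises by taking the minimum of the LinUCB bound with the trivial bound $\|\Theta_*\mathbf{Z}_t-\Theta_*\mathbf{X}_t\|_2\le\mathcal{H}_{\mathcal{Y}'}^{\infty}$ (the maximum shrinkage caps how far apart responses in $\mathcal{Y}$ can be), keeping $2$ as a floor so the bound degrades gracefully when $\mathcal{H}_{\mathcal{Y}'}^{\infty}$ is small.
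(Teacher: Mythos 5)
Your overall architecture matches the paper's proof: the same three-way split (pure exploration, a safety/optimality term, an exploration term), the same two good events ($\Theta_*\in\mathcal{C}_t$ from Theorem \ref{C-set} and the Gram-matrix lower bound of Lemma \ref{eigen}) with a union bound giving $1-2\delta'$, and the same sharpness argument: your surrogate $\mathbf{Z}_t$ with a nonpositive OFU middle bracket is exactly how the paper bounds its Term I, by exhibiting a feasible pair in $\mathcal{D}_t\times\mathcal{C}_t$ realizing the point of the shrunk set $(\mathcal{Y}')_{l'}^{\infty}$ closest to $\mathbf{B}\Theta_*\mathbf{X}_*$, then using $\|\mathbf{B}^{-1}\|_2=\tilde{\beta}_T$ and Lipschitzness; your identification of $T'\geq t'_h$ as the condition making the shrunk set nonempty, and of the stray $n$ as a typo, are both correct.

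The genuine gap is in your exploration-cost step. First, the elliptic potential lemma (Lemma \ref{lem:ellip_pot}) bounds $\sum_t \min\bigl(\|\mathbf{x}_{t,m}\|^2_{\mathbf{G}_{t,m}^{-1}},1\bigr)$, not $\sum_t \|\mathbf{x}_{t,m}\|^2_{\mathbf{G}_{t,m}^{-1}}$ as you wrote; without the truncation the sum can be far larger (early rounds can have $\|\mathbf{x}\|^2_{\mathbf{G}^{-1}}\approx K^2/\nu$). The entire role of the $\max(\mathcal{H}_{\mathcal{Y}'}^{\infty},2)$ prefactor is to enable this truncation: one needs the per-agent cap $r_{t,m}:=(\tilde{\theta}_{t,m}-\theta_{*,m})^T\mathbf{x}_{t,m}\leq \beta_{T,m}\mathcal{H}_{\mathcal{Y}'}^{\infty}$, so that $r_{t,m}\leq \beta_{T,m}\min\bigl(\mathcal{H}_{\mathcal{Y}'}^{\infty},2\|\mathbf{x}_{t,m}\|_{\mathbf{G}_{t,m}^{-1}}\bigr)\leq \beta_{T,m}\max(\mathcal{H}_{\mathcal{Y}'}^{\infty},2)\min\bigl(1,\|\mathbf{x}_{t,m}\|_{\mathbf{G}_{t,m}^{-1}}\bigr)$. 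Second, your proposed source of this cap is wrong: the maximum shrinkage does \emph{not} cap how far apart responses in $\mathcal{Y}$ can be (take $\mathcal{Y}=[0,1]\times[0,100]$, for which $\mathcal{H}_{\mathcal{Y}}^{\infty}=\tfrac{1}{2}$ while the diameter exceeds $100$); moreover your ``trivial bound'' is on $\|\Theta_*\mathbf{Z}_t-\Theta_*\mathbf{X}_t\|_2$, which is not the quantity $\|(\tilde{\Theta}_t-\Theta_*)\mathbf{X}_t\|_2$ being bounded in the third bracket, and an unscaled cap $\mathcal{H}_{\mathcal{Y}'}^{\infty}$ would not factor together with the $\beta_{T,m}$-weighted coordinates to yield the theorem's $\max(\mathcal{H}_{\mathcal{Y}'}^{\infty},2)\sqrt{\sum_m \beta_{T,m}^2\cdots}$ form. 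The paper instead obtains the cap from the very mechanism you already set up in your good event (ii): for $t>T'$, $2\beta_{T,m}\|\mathbf{x}_{t,m}\|_{\mathbf{G}_{t,m}^{-1}}\leq 2\sqrt{2}\beta_{T,m}K/\sqrt{2\nu+T'\check{\lambda}}\leq \beta_{T,m}\mathcal{H}_{\mathcal{Y}'}^{\infty}$, the last inequality being precisely $T'\geq t'_h$. Substituting this derivation for your geometric claim repairs the argument and recovers the paper's proof.
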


\section{The Privacy Safety Tradeoff}
To provide a better interpretation for the results in Theorem \ref{main_2} and the interplay between privacy, regret, and safety in data-driven learning, we will focus on the special case of the safe set being a simplex. Specifially, for a desired ``regret budget'', we suggest an unimprovable vector of privacy level for agents, meaning that any unilateral increase in the privacy level of one agent without decreasing another's  will not be feasible.

We define an $M$-dimensional simplex below, and for simplicity, assume that the simplex is an arbitrary diagonal transformation of the standard simplex that contains the origin.
Precisely, we consider a simplex of the form,
   \begin{equation}\label{H-simp}
        \mathit{S} = \mathit{S}(\mathbf{A},\mathbf{b}):= \{ \mathbf{x} \in \mathbb{R}^M: \mathbf{A}\mathbf{x} \leq \mathbf{b} \},
    \end{equation} 
where \(\mathbf{A}\) is an \((M+1) \times M\) matrix and \(\mathbf{b}\) is an \((M+1)\)-dimensional vector given by,
\begin{equation}\label{A_matrix}
    \mathbf{A} = \begin{bmatrix}
  \frac{1}{c_1} & \frac{1}{c_2} & \cdots & \frac{1}{c_M}\\ -1 & 0 & \cdots & 0 \\ 0 & -1 & \cdots & 0 \\ \vdots  &  & \ddots & \vdots  \\ 0 & \cdots & 0 & -1\\
    \end{bmatrix},
\end{equation}
and 
\begin{equation}\label{b}
    \mathbf{b} = \begin{bmatrix}
    \frac{1}{2} & \frac{1}{2q} & \cdots & \frac{1}{2q} \\
    \end{bmatrix}^T,
\end{equation}
with \(q = \sum_{m=1}^{M} \frac{1}{c_m}\) and \(c_m > 0\) for all \(m \in [M]\).





Let us now define the so-called privacy vector $\mathit{a}=[\alpha_1,\alpha_2,...,\alpha_M]$  as the vector of all $M$ agents' privacy level parameters (defined in \eqref{privacy_level}).
Furthermore, let $r(T, \mathit{a})$ be the regret bound in Theorem \ref{main_2} when the safe set is the simplex $\mathcal{Y}=\mathit{S}$, which is known to be $\mathcal{O}(T^{2/3}(\log (T))^{1/3})$ with an appropriate choice of algorithm parameters (as we show in Appendix\ref{bound_lim}).
Note that the constant in this regret bound $r(\mathit{a})  = \underset{T \rightarrow \infty}{\lim}\frac{r(T, \mathit{a})}{\left(T^{2}(\log T)\right)^{\frac{1}{3}}}$ is affected by the privacy levels $a$.  
Therefore, provided that we have a budget $U$ on $r(\mathit{a})$, we can consider the set of allowable privacy levels, $\mathcal{A} := \{ \mathit{a} \in \mathbb{R}^M_+ : r(\mathit{a}) \leq U \}.$
Given $\mathcal{A}$, we can then study the set of privacy levels within $\mathcal{A}$ that are unilaterally \emph{unimprovable},
\begin{equation}\label{pareto}
    \mathcal{A}_* := \{ \mathit{a}\in \mathcal{A} : (\mathit{a} + \mathbb{R}_+^M) \cap \mathcal{A} = \{ \mathit{a} \} \}.
\end{equation}
The following theorem identifies a point in $\mathcal{A}_*$. The proof of Theorem \ref{main_3} is provided in Appendix\ref{Appen4}.

\begin{theorem}\label{main_3}
    Consider the privacy vector $\mathit{a}^* \in \mathbb{R}^M$, for which the $m$th element is defined as
    \begin{equation*}
        \alpha^*_m = \sqrt{\left(\frac{R^2}{\sigma^2}  +\tilde{r}^2\right)\frac{c_m^2}{\Tilde{c}^2} - \frac{R^2}{\sigma^2}},
    \end{equation*}
    where $\tilde{c} = \max_{m \in [M]} c_m$ and
    \begin{equation*}
        \tilde{r} =\frac{1}{\sigma}\sqrt{\frac{U^3}{8 L^3 K^3 d M^{\frac{3}{2}} \left(\frac{2 S}{\hat{\lambda}} + \sqrt{4M - 3}\right)^3} - R^2}.
    \end{equation*}
    It holds that $\mathit{a}^* \in \mathcal{A}_*$.
\end{theorem}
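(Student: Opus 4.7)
The plan is to establish the two conditions that together characterize membership in $\mathcal{A}_*$: (i) $\mathit{a}^* \in \mathcal{A}$, i.e.\ $r(\mathit{a}^*) \le U$, and (ii) no strict unilateral increase of any coordinate of $\mathit{a}^*$ keeps us inside $\mathcal{A}$. By \eqref{pareto}, these two conditions imply $\mathit{a}^* \in \mathcal{A}_*$. I would reduce (ii) to strict monotonicity of $r(\cdot)$ in each coordinate $\alpha_m$; given such monotonicity, saturation $r(\mathit{a}^*) = U$ is enough on its own. Strict monotonicity in $\alpha_m$ is plausible because $\beta_{T,m}$ in \eqref{beta} is strictly increasing in $\alpha_m$ and the third term of the bound in Theorem \ref{main_2} contributes $\sqrt{\sum_{m \in [M]} \beta_{T,m}^2}$ to $r(\mathit{a})$; however, the other terms also depend on the $\alpha_m$ through $\mathbf{B}$ and hence through $\mbox{Sharp}_{\mathcal{Y}'}^{\infty}$ and $\mathcal{H}_{\mathcal{Y}'}^{\infty}$, so one must verify that this monotone third-term contribution dominates any compensating decrease coming from the rescaling of $\mathcal{Y}'$.

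The computational core is to verify $r(\mathit{a}^*) = U$. The driving algebraic identity is
\begin{equation*}
    R^2 + \sigma^2(\alpha_m^*)^2 \;=\; \bigl(R^2 + \sigma^2 \tilde r^{\,2}\bigr)\frac{c_m^2}{\tilde c^{\,2}},
\end{equation*}
which follows at once from the stated form of $\alpha_m^*$. Consequently, the leading $T$-growing piece of $\beta_{T,m}$ in \eqref{beta} is proportional to $c_m$, so the scaling matrix $\mathbf{B}$ in \eqref{transform} is, up to a global constant, $\diag(1/c_1,\dots,1/c_M)$. The first row of $\mathbf{A}' = \mathbf{A}\mathbf{B}^{-1}$ then becomes a constant vector and $\mathcal{Y}'$ reduces to a clean simplex whose maximum shrinkage and sharpness (Definitions \ref{max_shrinkage}--\ref{sharpness}) admit closed-form expressions. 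I would then (a) substitute these expressions and the leading-order forms of $\beta_{T,m}$ into the bound of Theorem \ref{main_2}; (b) choose $T'$ to balance pure exploration against the sharpness term as in Appendix \ref{bound_lim}, producing the $T^{2/3}(\log T)^{1/3}$ growth; (c) extract the limiting coefficient $r(\mathit{a}^*)$; and (d) verify it equals $U$ by inverting the closed form of $\tilde r$ in the theorem statement.

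The hardest part will be steps (a)--(b): computing the sharpness of the \emph{transformed} simplex $\mathcal{Y}'$ rather than the standard one, and cleanly balancing all regret contributions in the limit. A secondary subtlety is that the strict monotonicity argument in the first paragraph must hold in every coordinate direction at $\mathit{a}^*$, not just almost everywhere; this rules out knife-edge cases where a drop in $\mbox{Sharp}_{\mathcal{Y}'}^{\infty}$ or $\mathcal{H}_{\mathcal{Y}'}^{\infty}$ exactly cancels the growth of $\sqrt{\sum_m \beta_{T,m}^2}$. The closed-form simplex sharpness obtained in step (b) should make this verification routine once the geometry is explicit.
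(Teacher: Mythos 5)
Your plan for condition (i), $r(\mathit{a}^*)=U$, is essentially the paper's own computation: the identity $R^2+\sigma^2(\alpha_m^*)^2=(R^2+\sigma^2\tilde r^{\,2})\,c_m^2/\tilde c^{\,2}$ makes $\beta_{T,m}$ proportional to $c_m$ to leading order, the scaled simplex becomes symmetric, its sharpness factor collapses to $\sqrt{4M^2-3M}$, and inverting the closed form of $\tilde r$ gives exactly $U$ (this is Lemmas \ref{transformed_sharp}, \ref{min_sharp} and \ref{limit_lemma} followed by substitution). That half of the proposal is sound.

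The gap is in condition (ii). You reduce it to \emph{global} strict coordinate-wise monotonicity of $r(\cdot)$, to be proved by showing that the growth of $\sqrt{\sum_m\beta_{T,m}^2}$ dominates any compensating decrease in $\mbox{Sharp}_{\mathcal{Y}'}^{\infty}$ or $\mathcal{H}_{\mathcal{Y}'}^{\infty}$. Both halves of this fail. First, the term carrying $\sqrt{\sum_m\beta_{T,m}^2}$ is of order $\sqrt{T\log T}$ and therefore vanishes in the limit defining $r(\mathit{a})=\lim_{T\to\infty} r(T,\mathit{a})/(T^2\log T)^{1/3}$; by Lemma \ref{limit_lemma}, the limiting constant depends on $\mathit{a}$ only through $\tilde\alpha=\max_m\alpha_m$ and the balance factor $f(\mathit{a})$ in \eqref{fa_app}, so there is no ``monotone third-term contribution'' left to do any work. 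Second, and more fundamentally, $r$ is \emph{not} coordinate-wise increasing: take $M=2$, $R=0$, $c_1=c_2$, and move from $\mathit{a}=(2,1)$ to $(2,1.5)$; then $\tilde\alpha=2$ is unchanged while $f(\mathit{a})$ drops from $3$ to $7/3$, so $\sqrt{(M-1)+(2f(\mathit{a})-1)^2}$ drops from $\sqrt{26}$ to $\sqrt{130}/3$ and $r$ strictly \emph{decreases}. Raising the noise of an under-noised agent re-balances the scaled simplex and reduces the sharpness penalty, so a global monotonicity argument cannot succeed; the property you need is special to $\mathit{a}^*$. The paper's proof exploits precisely this structure: at $\mathit{a}^*$ the vector $h_m(\alpha_m)=c_m/\sqrt{R^2+\alpha_m^2\sigma^2}$ is constant across $m$, hence $f(\mathit{a}^*)=M$ is the \emph{global minimum} of $f$ (Lemma \ref{function}); for any $v\in\mathbb{R}_+^M\setminus\{\mathbf{0}\}$, either $h(\mathit{a}^*+v)$ is no longer constant, in which case $f(\mathit{a}^*+v)>M$ strictly, or it is still constant, in which case strict monotonicity of each $h_m$ forces \emph{every} coordinate, and hence $\tilde\alpha$, to strictly increase; since $r$ is strictly increasing in the pair $(\tilde\alpha, f)$, both cases give $r(\mathit{a}^*+v)>U$. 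Your closing remark that monotonicity ``must hold in every coordinate direction at $\mathit{a}^*$'' gestures at the right requirement, but you neither identify the balanced structure of $\mathit{a}^*$ that makes it true nor give an argument that would establish it; as written, the plan stalls at a monotonicity claim that is false away from $\mathit{a}^*$.
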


\begin{Corollary} \label{P_S_T}
    When $R = 0$, $\alpha^*_m = \tilde{r}\frac{c_m}{\Tilde{c}}$  for all $ m \in [M]$.
\end{Corollary}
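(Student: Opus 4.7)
The plan is to obtain Corollary \ref{P_S_T} as an immediate specialization of Theorem \ref{main_3}, since the corollary simply reports what the formula for $\alpha_m^*$ reduces to when the sub-Gaussian noise parameter $R$ vanishes.

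First, I would start from the expression
\begin{equation*}
\alpha_m^* = \sqrt{\left(\frac{R^2}{\sigma^2} + \tilde{r}^2\right)\frac{c_m^2}{\tilde{c}^2} - \frac{R^2}{\sigma^2}}
\end{equation*}
given in the statement of Theorem \ref{main_3}, and set $R = 0$. Both occurrences of $R^2/\sigma^2$ then vanish, leaving
\begin{equation*}
\alpha_m^* = \sqrt{\tilde{r}^2 \frac{c_m^2}{\tilde{c}^2}}.
\end{equation*}

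Next, I would justify that the square root can be taken cleanly: by hypothesis each $c_m > 0$ (from the definition of the simplex in \eqref{A_matrix}), and $\tilde{c} = \max_{m \in [M]} c_m > 0$, so $c_m/\tilde{c} > 0$. The quantity $\tilde{r}$, which when $R=0$ simplifies to
\begin{equation*}
\tilde{r} = \frac{1}{\sigma}\sqrt{\frac{U^3}{8 L^3 K^3 d M^{3/2}\left(\frac{2S}{\hat{\lambda}} + \sqrt{4M-3}\right)^3}},
\end{equation*}
is nonnegative provided the regret budget $U$ is nonnegative, which is implicit in the problem formulation. Thus $\sqrt{\tilde{r}^2 c_m^2 / \tilde{c}^2} = \tilde{r}\, c_m/\tilde{c}$, yielding the claimed identity for every $m \in [M]$.

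There is no real obstacle here; the corollary is purely algebraic and serves as an interpretation of Theorem \ref{main_3}. If anything, the only thing to flag is the sign convention ensuring the argument of the outer square root is nonnegative when $R=0$, which I have just verified. Hence the proof is a one-line substitution followed by simplification of the square root.
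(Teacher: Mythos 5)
Your proposal is correct and matches the paper's (implicit) treatment: the paper offers no separate proof of Corollary \ref{P_S_T}, since it is an immediate substitution of $R=0$ into the formula for $\alpha^*_m$ in Theorem \ref{main_3}, exactly as you do. Your extra check that $c_m/\tilde{c}>0$ and $\tilde{r}\ge 0$ so that $\sqrt{\tilde{r}^2 c_m^2/\tilde{c}^2}=\tilde{r}\,c_m/\tilde{c}$ is a harmless (and welcome) bit of rigor beyond what the paper states.
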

 
Corollary \ref{P_S_T}   shows that, in the case of no bandit noise $\eta_{t,m} = 0$, this unimprovable privacy vector in  Theorem \ref{main_3} reduces to an easily interpretable form. 
In particular,   the privacy level for each user should be chosen proportional to the ``tightness" of the constraint in that user's direction, i.e. $c_m$ for user $m$.
This suggests that  agents located in regions subject to tighter constraints may be afforded lower privacy levels.

\section{Numerical Experiments}\label{num_exp}
To support our theoretical results, we conducted numerical experiments in a 3-dimensional action space with $M = 3$ agents, over a time horizon of $T = 30,000$ steps. Due to space constraints, detailed discussion of the experiments is provided in Appendix\ref{app:Appen4}.  Figure \ref{norm} confirms that the privacy vector $\mathit{a}^*$ provided in Theorem \ref{main_3} results in lower regret.


\begin{figure}[htbp]
 \centering
 \includegraphics[width=0.3\textwidth]{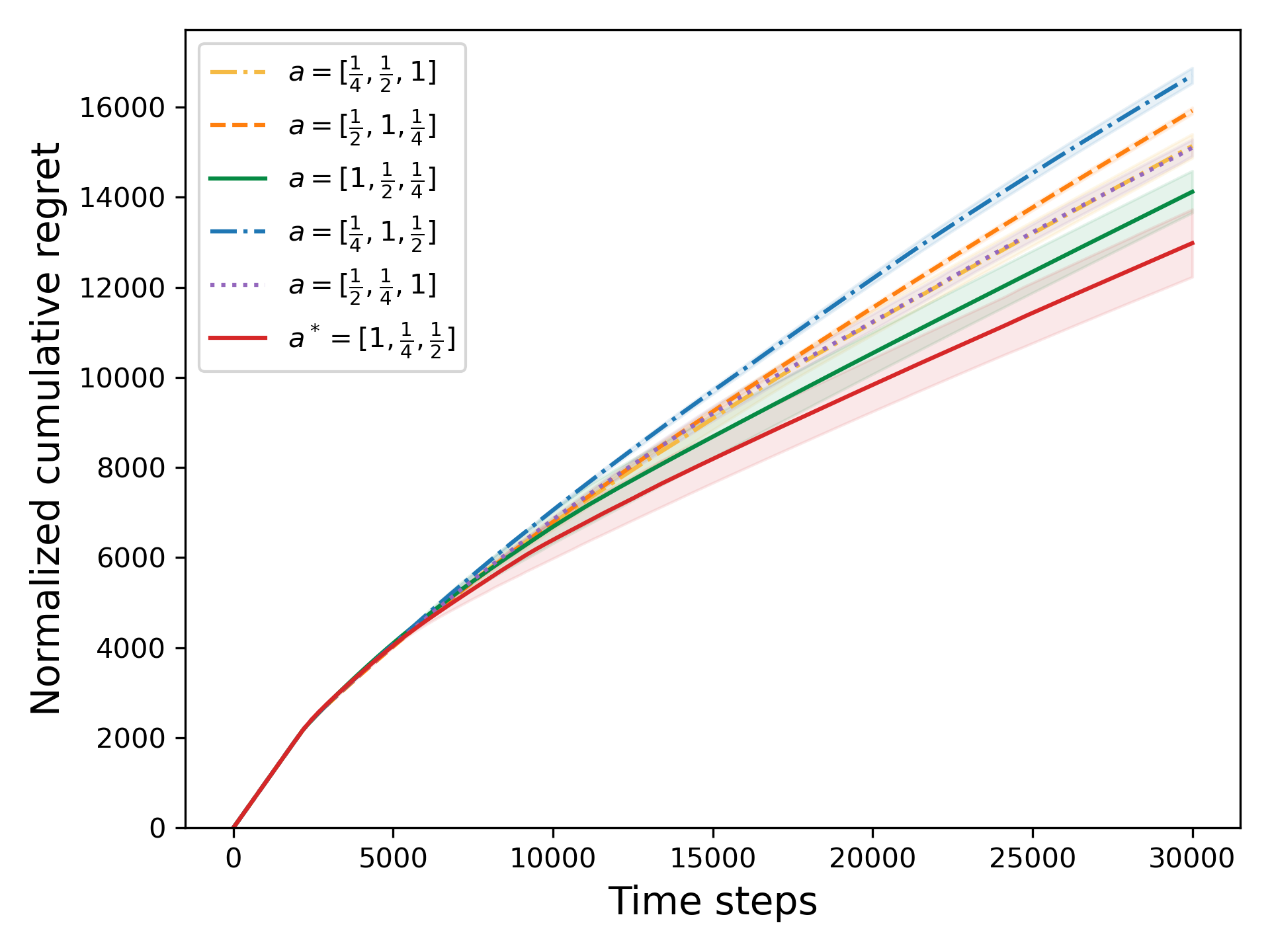}
 \caption{Average normalized regret over 3 setup for each privacy vector.} 
 \label{norm}
 \end{figure}

\section{conclusion}\label{conclusion}

We studied a system of various stochastic linear bandit problems coupled by a global safety constraint, where a non-trusted central coordinator collects agents' bandit responses protected by a local differential privacy mechanism and selects the next set of agents' actions while ensuring stage-wise safety. We suggest a regret bound that captures the impact of varying privacy levels among agents for polytopic safety sets. Then, for a fixed regret bound and a simplex constraint set, we provide a set of unilaterally unimprovable privacy levels for agents. 
It can be of interest to explore this problem across various types of constraint sets and under different privacy frameworks.

\section*{Acknowledgment}
This work was supported by NSF grant $\#2419982$.

\newpage
\bibliographystyle{IEEEtran}
\IEEEtriggeratref{13}
\bibliography{ref}










\appendices
\newpage

\section*{Appendix}
\subsection{Privacy Guarantee}\label{app:Appen1}

\begin{lemma}\label{post-processing}
    (post-processing property, proposition 2.1 \cite{dwork2014algorithmic}) If $\mathcal{A}:\mathcal{Y} \rightarrow \mathcal{U} $ is $(\epsilon_m , \delta)$-LDP and $f:\mathcal{U} \rightarrow \mathcal{G} $ is a fixed map, then $f \circ \mathcal{A} :  \mathcal{Y} \rightarrow \mathcal{G}$ is $(\epsilon_m , \delta)$-LDP
\end{lemma}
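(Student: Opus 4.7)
The plan is to show the post-processing lemma by the standard measure-theoretic reduction: any event defined on the output of $f \circ \mathcal{A}$ corresponds to a pre-image event on the output of $\mathcal{A}$, and then the LDP guarantee on $\mathcal{A}$ applied to that pre-image gives what we need. Since $f$ is deterministic and fixed, we don't have to worry about integrating over any randomness of $f$.

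First, I would fix an arbitrary pair of inputs $y, y' \in \mathcal{Y}$ and an arbitrary measurable subset $G \subseteq \mathcal{G}$. The event of interest on the composed mechanism is
\begin{equation*}
    \{(f \circ \mathcal{A})(y) \in G\} = \{\mathcal{A}(y) \in f^{-1}(G)\},
\end{equation*}
and similarly for $y'$. Here $f^{-1}(G) := \{u \in \mathcal{U} : f(u) \in G\}$. I would note that, since we are speaking of probabilities of this event, $f$ must be a measurable fixed map from $\mathcal{U}$ to $\mathcal{G}$ (this is the implicit regularity condition in the statement), so that $f^{-1}(G)$ is a measurable subset of $\mathcal{U}$ for every measurable $G \subseteq \mathcal{G}$.

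Next, I would invoke Definition \ref{LDP_def} with the measurable set $\mathbf{U} := f^{-1}(G) \subseteq \mathcal{U}$ and the inputs $y, y'$. Because $\mathcal{A}$ is $(\epsilon_m, \delta)$-LDP, we get
\begin{equation*}
    \mathbf{P}[\mathcal{A}(y) \in f^{-1}(G)] \leq e^{\epsilon_m}\,\mathbf{P}[\mathcal{A}(y') \in f^{-1}(G)] + \delta.
\end{equation*}
Rewriting both sides back in terms of $f \circ \mathcal{A}$ via the pre-image identity above then yields
\begin{equation*}
    \mathbf{P}[(f \circ \mathcal{A})(y) \in G] \leq e^{\epsilon_m}\,\mathbf{P}[(f \circ \mathcal{A})(y') \in G] + \delta,
\end{equation*}
which is exactly the $(\epsilon_m,\delta)$-LDP condition for $f \circ \mathcal{A}$. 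Since $y, y', G$ were arbitrary, the claim follows.

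There isn't really a substantial obstacle here; the only technical subtlety is the measurability of $f^{-1}(G)$, which is automatic under the standard convention that the ``fixed map'' $f$ is Borel/measurable (otherwise the probabilities on the left-hand side would not even be well-defined). No randomized-$f$ arguments, convolutions, or composition-of-mechanisms manipulations are required, because the statement concerns deterministic post-processing of a single LDP mechanism.
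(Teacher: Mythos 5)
Your proof is correct. The paper itself gives no proof of this lemma---it simply cites Proposition 2.1 of \cite{dwork2014differential}---and your pre-image argument ($\{(f \circ \mathcal{A})(y) \in G\} = \{\mathcal{A}(y) \in f^{-1}(G)\}$, then apply the LDP guarantee of $\mathcal{A}$ to the measurable set $f^{-1}(G)$) is exactly the standard proof of that cited result in the case of a deterministic map, including the correct observation that measurability of $f$ is the only regularity needed. The only point worth noting is that the general Dwork--Roth proposition also covers \emph{randomized} post-processing maps, which requires one extra step (writing the randomized map as a convex combination of deterministic ones and noting the $(\epsilon,\delta)$ guarantee is preserved under convex combinations); since the paper's statement restricts to a fixed map, your argument fully suffices for the lemma as stated, and also for how the paper uses it (the coordinator's least-squares and decision rules are deterministic functions of the perturbed responses).
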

Using the post-processing property, since agent $m$ sends out the output of mechanism $\mathcal{A}_m$, Safe-Private Lin-UCB is $(\epsilon_m , \delta)$-LDP protecting $y_{t,m}$.

\subsection{Proof of Theorem \ref{C-set}}\label{Appen2}

We first show that $\Theta_*$ is in the confidence set with high probability. 
The proof is based on the method in \cite{abbasi2011improved} used for linear UCB algorithms. 

Given that at each round $t$, each agent sends the perturbed version of its response $u_{t,m} = y_{t,m} + h_{t,m}$ and given that  $y_{t,m} = \theta_{*,m}^{T}\mathbf{x}_{t,m} + \eta_{t,m}$, it holds that $u_{t,m} = \theta_{*,m}^{T}\mathbf{x}_{t,m} + (\eta_{t,m} + h_{t,m})$. Then let us define $w_{t,m} := \eta_{t,m} + h_{t,m} $.

\begin{lemma}\label{martingle}
For all $t \in [T]  $ and $ m \in [M]$ , $w_{t,m}$ is conditionally zero-mean $R'$-sub-Gaussian for a fixed constant $R' = \sqrt{R^2 + \sigma_{m}^2}$, i.e. 
\begin{equation}
\begin{split}
     & \mathbb{E}[w_{t,m}|\mathcal{F}_{t-1}] = 0 \\ 
     & \mathbb{E}[e^{\lambda w_{t,m}}|\mathcal{F}_{t-1}] \leq exp(\lambda^2\mathit{R'}^2/2),  \,\,\, \forall \lambda \in \mathbb{R} \\
\end{split}
\end{equation}
where $\mathcal{F}_{t-1} = \sigma(\mathbf{x}_{1:t},\eta_{1:t-1 , 1:M},h_{1:t-1 , 1:M}).$
\end{lemma}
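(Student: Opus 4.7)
My plan is to verify the two claims (zero conditional mean and the sub-Gaussian MGF bound) by exploiting the independence structure of the privacy noise $h_{t,m}$ relative to the filtration $\mathcal{F}_{t-1}$ and to the bandit noise $\eta_{t,m}$. The key observation is that $h_{t,m}$ is drawn freshly at round $t$ independently of all past information and of $\eta_{t,m}$, so conditioning on $\mathcal{F}_{t-1}$ leaves its law untouched.

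First I would handle the mean: by linearity of conditional expectation, $\mathbb{E}[w_{t,m}\mid \mathcal{F}_{t-1}] = \mathbb{E}[\eta_{t,m}\mid \mathcal{F}_{t-1}] + \mathbb{E}[h_{t,m}\mid \mathcal{F}_{t-1}]$. The first term vanishes by Assumption \ref{A1}, and since $h_{t,m}\sim\mathcal{N}(0,\sigma_m^2)$ is independent of $\mathcal{F}_{t-1}$, the second term equals $\mathbb{E}[h_{t,m}]=0$.

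Next I would bound the conditional MGF. Writing $\mathbb{E}[e^{\lambda w_{t,m}}\mid\mathcal{F}_{t-1}] = \mathbb{E}[e^{\lambda\eta_{t,m}}e^{\lambda h_{t,m}}\mid\mathcal{F}_{t-1}]$, I would use the independence of $h_{t,m}$ from both $\mathcal{F}_{t-1}$ and $\eta_{t,m}$ to factor the conditional expectation as $\mathbb{E}[e^{\lambda\eta_{t,m}}\mid\mathcal{F}_{t-1}]\cdot\mathbb{E}[e^{\lambda h_{t,m}}]$. The first factor is bounded by $\exp(\lambda^2 R^2/2)$ via Assumption \ref{A1}, and the second is exactly $\exp(\lambda^2\sigma_m^2/2)$ from the Gaussian MGF. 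Multiplying yields $\exp(\lambda^2(R^2+\sigma_m^2)/2) = \exp(\lambda^2 R'^2/2)$ as desired.

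The only subtle point — and really the only place any care is required — is justifying the factorization step, namely that $h_{t,m}$ is independent of the $\sigma$-algebra generated jointly by $\mathcal{F}_{t-1}$ and $\eta_{t,m}$. This follows from the protocol: $h_{t,m}$ is drawn i.i.d.\ by agent $m$ at round $t$, after the action $\mathbf{x}_{t,m}\in\mathcal{F}_{t-1}$ has been sent, and independently of the realization of the bandit noise $\eta_{t,m}$. Once this independence is stated cleanly, the rest is a one-line calculation; no new machinery is needed beyond Assumption \ref{A1} and the Gaussian MGF.
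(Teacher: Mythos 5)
Your proof is correct and follows essentially the same route as the paper's: decompose the conditional mean by linearity, factor the conditional MGF using the independence of the fresh privacy noise $h_{t,m}$, and multiply the sub-Gaussian bound from Assumption \ref{A1} with the Gaussian MGF. If anything, your explicit justification of the factorization step (independence of $h_{t,m}$ from the $\sigma$-algebra generated by $\mathcal{F}_{t-1}$ and $\eta_{t,m}$) is slightly more careful than the paper, which performs that factorization without comment.
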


\begin{proof}
Based on Assumption \ref{A1}, for all $m \in [M]$ and $ \{\eta_{m}\}_{t=1}^{\infty}$, the $\eta_{t,m}$ is conditionally zero-mean R-sub Gaussian. Since $h_{t,m} \overset{\mathrm{iid}}{\sim} \mathcal{N}(0,\alpha_m^2\sigma^2)$ is also zero-mean Gaussian independent random noise with variance $\sigma_{m}^2 = \alpha_m^2\sigma^2$, it holds that:
      \begin{flalign*}
        \mathbb{E}[w_{t,m}|\mathcal{F}_{t-1}] &= \mathbb{E}[\eta_{t,m} + h_{t,m}|\mathcal{F}_{t-1}] \\
        & = \mathbb{E}[\eta_{t,m}|\mathcal{F}_{t-1}] + \mathbb{E}[h_{t,m}|\mathcal{F}_{t-1}] \\
        & = 0 + \mathbb{E}[h_{t,m}|\mathcal{F}_{t-1}] \\
        & = 0 + \mathbb{E}[h_{t,m}] \\
        & = 0,\\
      \end{flalign*}
      
    and 
    
       \begin{flalign*}
        \mathbb{E}[e^{\lambda w_{t,m}}|\mathcal{F}_{t-1}] &= \mathbb{E}[e^{\lambda (\eta_{t,m} + h_{t,m})}|\mathcal{F}_{t-1}] \\
        & = \mathbb{E}[e^{\lambda (\eta_{t,m})}|\mathcal{F}_{t-1}]\mathbb{E}[e^{\lambda (h_{t,m})}|\mathcal{F}_{t-1}]\\
        & \leq  exp(\lambda^2\mathit{R}^2/2)exp(\lambda^2\sigma_{m}^2/2)\\
        & = exp(\lambda^2(\mathit{R}^2 + \sigma_{m}^2)/2) \\
         & = exp(\lambda^2\mathit{R'}^2/2).\\
      \end{flalign*}
\end{proof}
Adopting the Theorem 2 of \cite{abbasi2011improved}, with a probability of $1 - \alpha' $, where $\alpha' \in (0,1/M)$, for all rounds $t > T'$
\begin{equation*}
   ||\theta_m - \hat{\theta}_{t,m}||_{\mathbf{G}_{t,m}} \leq R'\sqrt{d\log{\left( \frac{1+ tK^2/\nu}{\alpha'}\right)}}.
\end{equation*}
Since this bound holds for each unknown bandit parameter of each agent independently, we can say, for $\delta' = M\alpha'$, Theorem \ref{C-set} holds. 

\subsection{Proof of Theorem \ref{main_2}}\label{Appen3}

Given the regret of  Safe-Private Lin-UCB, $R_T$ in \eqref{regret}, denote the regret accumulated at time $t$ as $r_t = f(\Theta_{*}\mathbf{X}_*) - f(\Theta_{*}\mathbf{X}_t)$. This  can be decomposed in the following form:
\begin{equation}\label{regret_term}
\begin{split}    
    r_t  & =  \underbrace{f(\Theta_{*}\mathbf{X}_*)  -f(\Tilde{\Theta}_t\mathbf{X}_t)}_{\mbox{Term I}}+ \underbrace{ f(\Tilde{\Theta}_t\mathbf{X}_t)  -  f(\Theta_{*}\mathbf{X}_t)}_{\mbox{Term II}}. \\
\end{split}
\end{equation}

To find the upper bound on the regret  we break the proof into three main parts:
\begin{itemize}
    \item The regret of safety (Term I) during the exploration-exploitation phase;
    \item The regret of Term II during the exploration-exploitation phase;
    \item The regret of the pure exploration phase.
\end{itemize}

\subsubsection{Random Action Selection for Pure Exploration Phase}\label{Random_Act}
We start by bounding Term I. To do so, we first explant the process of action selection during the pure exploration phase. During this phase the central coordinator chooses actions for each user uniformly at random from a subset of the set of known safe actions $\mathcal{D}_0$ in \eqref{1}. The goal of this pure exploration phase is to provide a lower bound  on the minimum eigenvalue of the covariance matrix $\mathbb{E}[\mathbf{x}_{t,m}\mathbf{x}_{t,m}^T]$  for $t > T'$.
Based on Assumption \ref{A4}, $\mathcal{D}_0$ has a nonempty interior and as such, there exists $\mathbf{v} \in \mathbb{R}^{Md}$ such that the open ball $\mathbf{v} + \mathcal{B}_{2}(r)$ is a subset of $\mathcal{D}_0$. As a result, $\mathbf{v} + \Bar{\mathcal{B}}_{2}(\frac{r}{2})$ is a subset of $\mathcal{D}_0$ as well. To sample from this set, define a vector $\mathbf{u}$ uniformly sampled from a unit sphere i.i.d, with, $\mathbb{E}[\mathbf{u}\mathbf{u}^T] = \frac{1}{Md}I$. The central coordinator  chooses $\mathbf{X}_t$ as follow:
\begin{equation}\label{random} 
\mathbf{X}_t = \mathbf{v} + \frac{r}{2}\mathbf{u}.
\end{equation}
Since $\mathbf{X}_t = [\mathbf{x}_{t,1}^T , \mathbf{x}_{t,2}^T , \cdots , \mathbf{x}_{t,M}^T ]^T$ is a stacked vector of all agents' actions at round $t$, for $\mathbf{v}= [\mathbf{v}_{1}^T , \mathbf{v}_{2}^T , \cdots , \mathbf{v}_{M}^T ]^T$ and given $\mathbb{E}[\mathbf{X}_t\mathbf{X}_t^T] = \mathbf{v}\mathbf{v}^T + \frac{r^2}{4Md}I$, we can state that for each agent $m$, we have $\mathbb{E}[\mathbf{x}_{t,m}\mathbf{x}_{t,m}^T] = \mathbf{v}_m\mathbf{v}_m^T + \frac{r^2}{4Md}I$  and $\check{\lambda}(\Sigma(\mathbf{x}_{t,m})) > \frac{r^2}{4Md} > 0 $, where $\check{\lambda}$ denotes the minimum eigenvalue.
 
Now we adopt the following Lemma from \cite{amani2019linear} to bound the minimum eigenvalue of the Gram matrix 
\begin{equation*}
    \mathbf{G}_{T',m} =\sum_{i=1}^{T'-1} (\mathbf{x}_{i,m}\mathbf{x}_{i,m}^T)+\nu I,
\end{equation*} for all agents, which is needed further to define the shrunk version of the safe set. 
\begin{lemma}\label{eigen}
(Lemma 1 in \cite{amani2019linear}) With probability at least $1-\delta'$, where $\delta' \in (0,1/2)$, it holds: 
    \begin{equation}
        \check{\lambda}(\mathbf{G}_{T',m}) \ge \nu + \frac{T'\check{\lambda}}{2},
    \end{equation}
where $T' \geq \frac{8K^2}{\check{\lambda}}log(\frac{d}{\delta'}) =: t_{\delta'}$ for all $m \in [M]$.
\end{lemma}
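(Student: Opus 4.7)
The plan is to view $\mathbf{G}_{T',m} = \nu I + \sum_{i=1}^{T'-1}\mathbf{x}_{i,m}\mathbf{x}_{i,m}^T$ as a regularized sum of rank-one PSD matrices and apply a matrix Chernoff inequality to its minimum eigenvalue. Since the statement is cited as Lemma 1 of \cite{amani2019linear}, the substantive task is really just to verify that the random-action rule \eqref{random} produces samples satisfying that lemma's hypotheses with the right parameters.

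First I would compute the second moment of each per-agent action. Using \eqref{random} with $\mathbf{u}$ uniform on the unit sphere in $\mathbb{R}^{Md}$ (so that $\mathbb{E}[\mathbf{u}\mathbf{u}^T] = (1/Md)\,I$), the $m$-th block satisfies
\begin{equation*}
\mathbb{E}[\mathbf{x}_{i,m}\mathbf{x}_{i,m}^T] = \mathbf{v}_m\mathbf{v}_m^T + \frac{r^2}{4Md}\, I_d \succeq \check{\lambda}\, I_d,
\end{equation*}
with $\check{\lambda} = r^2/(4Md)$, matching the pure-exploration bound of Section \ref{pure_exp}. Each summand is PSD with spectral norm at most $\|\mathbf{x}_{i,m}\|_2^2 \leq K^2$ by Assumption \ref{A2}, and the samples are i.i.d.\ across rounds.

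Next I would invoke the matrix Chernoff bound (lower-tail form) for i.i.d.\ bounded PSD matrices: for $\gamma \in (0,1)$,
\begin{equation*}
\Pr\!\left[\check{\lambda}\!\left(\textstyle\sum_{i=1}^{T'-1}\mathbf{x}_{i,m}\mathbf{x}_{i,m}^T\right) \leq (1-\gamma)(T'-1)\check{\lambda}\right] \leq d\exp\!\left(-\frac{\gamma^2 (T'-1)\check{\lambda}}{2K^2}\right).
\end{equation*}
Choosing $\gamma = 1/2$ and adding the regularizer $\nu I$ yields $\check{\lambda}(\mathbf{G}_{T',m}) \geq \nu + T'\check{\lambda}/2$ with failure probability at most $d\exp(-T'\check{\lambda}/(8K^2))$. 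Setting this at most $\delta'$ and solving gives exactly the stated threshold $T' \geq (8K^2/\check{\lambda})\log(d/\delta') = t_{\delta'}$.

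The main subtlety I would be careful with is the union bound across agents: the claim asserts the eigenvalue bound simultaneously for all $m \in [M]$, which nominally costs a $\log M$ factor absorbed into $\delta'$ (or handled by replacing $\delta'$ by $\delta'/M$ in each per-agent application). Pinning down the leading constant $8K^2$ precisely — rather than a slightly looser constant produced by a generic matrix Chernoff — is the only other delicate point, but since we are directly invoking the result of \cite{amani2019linear}, no new concentration work is needed beyond verifying the covariance structure computed above.
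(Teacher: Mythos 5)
Your proposal is correct, and it does more than the paper itself does: the paper never proves this lemma, it simply imports Lemma 1 of \cite{amani2019linear} after verifying in Appendix \ref{Random_Act} that the uniform-ball sampling rule \eqref{random} gives $\mathbb{E}[\mathbf{x}_{t,m}\mathbf{x}_{t,m}^T] = \mathbf{v}_m\mathbf{v}_m^T + \frac{r^2}{4Md}I$ and hence $\check{\lambda}(\Sigma(\mathbf{x}_{t,m})) > \frac{r^2}{4Md} > 0$. Your covariance verification coincides exactly with that appendix computation, and your matrix-Chernoff reconstruction (lower tail for i.i.d.\ PSD summands bounded by $K^2$, with $\gamma = 1/2$) is precisely the argument underlying the cited lemma, so nothing in your route conflicts with the paper; you have simply made the black box explicit. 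Two of your own observations deserve sharpening rather than being left as asides. First, the indexing: since $\mathbf{G}_{T',m} = \nu I + \sum_{i=1}^{T'-1}\mathbf{x}_{i,m}\mathbf{x}_{i,m}^T$ contains only $T'-1$ samples, your Chernoff step yields $\check{\lambda}(\mathbf{G}_{T',m}) \geq \nu + (T'-1)\check{\lambda}/2$ with failure probability $d\exp\bigl(-(T'-1)\check{\lambda}/(8K^2)\bigr)$; the jump to $\nu + T'\check{\lambda}/2$ in your last display silently drops the $-1$. This mismatch is inherited from the paper's notation (the original reference sums over $T_0$ samples, not $T_0 - 1$), but a self-contained proof should either adjust the constant or note the convention. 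Second, your union-bound remark is not merely a subtlety but a genuine correction: the lemma as stated claims the bound simultaneously for all $m \in [M]$ with probability $1-\delta'$, which strictly requires applying the per-agent bound with confidence $\delta'/M$, i.e.\ threshold $\frac{8K^2}{\check{\lambda}}\log\bigl(\frac{dM}{\delta'}\bigr)$; the paper glosses over this exactly as you suspected, so your handling is the more careful one.
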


For Lemma \ref{eigen} and Theorem \ref{C-set} to hold jointly with probability at least $1 - \delta$ by the union bound, we need each of them to hold with probability at least $1 - \frac{\delta}{2}$. Therefore,  for $\delta' = \frac{\delta}{2}$, when $\delta' \in (0,1/2)$, they jointly hold with probability $1 - \delta$.
For the remainder of the proof, we will condition on this holding without further reference.

Now, we bound Term I directly in the following.
\begin{lemma}\label{Term1}
     Let Assumptions \ref{A1}-\ref{A4} hold. Then for $t > T'$, Term I of the regret at each round bounded as 
    \begin{equation}
        f(\Theta_{*}\mathbf{X}_*)  -f(\Tilde{\Theta}_t\mathbf{X}_t) \leq L\tilde{\beta}_T\mbox{Sharp}_{\mathcal{Y}'}^{\infty}(\frac{2\sqrt{2}K}{\sqrt{2\nu + T'\check{\lambda}}}),
    \end{equation}
    where $T' \ge \max(t_{\delta'} , t'_h)$ with some probability. 
\end{lemma}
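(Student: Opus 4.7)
The plan is to combine the OFU principle with the Lipschitz property of $f$, using the sharpness of the transformed safe set $\mathcal{Y}'$ to control the gap caused by playing in the conservative inner-approximation $\mathcal{D}_t$ rather than in $\mathcal{D}_s$. The starting point is to introduce a surrogate action $\mathbf{X}_t^s \in \mathcal{D}_t$. On the good event $\Theta_*\in\mathcal{C}_t$ (Theorem \ref{C-set}), the definition of $(\mathbf{X}_t,\tilde{\Theta}_t)$ gives $f(\tilde{\Theta}_t\mathbf{X}_t)\ge f(\Theta_*\mathbf{X}_t^s)$ for any such surrogate, so Assumption \ref{A3} yields
\[
f(\Theta_*\mathbf{X}_*)-f(\tilde{\Theta}_t\mathbf{X}_t)\leq L\,\|\Theta_*(\mathbf{X}_*-\mathbf{X}_t^s)\|_2.
\]
The task reduces to exhibiting an $\mathbf{X}_t^s\in\mathcal{D}_t$ whose expected response is close to $\Theta_*\mathbf{X}_*$.

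To identify the right level of shrinkage in the output space, I would work in the transformed coordinates. For any $\mathbf{X}\in\mathcal{D}$ and any $\Theta\in\mathcal{C}_t$, the triangle inequality on $\mathcal{C}_t$ yields $\|\theta_m-\theta_{*,m}\|_{\mathbf{G}_{t,m}}\leq 2\beta_{t,m}$. Combining this with Cauchy--Schwarz, the bound $\|\mathbf{x}_m\|_2\leq K$ from Assumption \ref{A2}, the Gram-matrix eigenvalue lower bound $\check{\lambda}(\mathbf{G}_{t,m})\geq \nu+T'\check{\lambda}/2$ from Lemma \ref{eigen} (valid for $T'\geq t_{\delta'}$), and the monotonicity $\beta_{t,m}\leq\beta_{T,m}$ produces
\[
\bigl\|\mathbf{B}(\Theta-\Theta_*)\mathbf{X}\bigr\|_\infty \leq \frac{2\sqrt{2}\,K}{\sqrt{2\nu+T'\check{\lambda}}}=:\Delta.
\]
Consequently, any $\mathbf{X}_t^s\in\mathcal{D}$ with $\mathbf{B}\Theta_*\mathbf{X}_t^s\in{\mathcal{Y}'}^{\infty}_{\Delta}$ automatically satisfies $\mathbf{B}\Theta\mathbf{X}_t^s=\mathbf{B}\Theta_*\mathbf{X}_t^s+\mathbf{B}(\Theta-\Theta_*)\mathbf{X}_t^s\in\mathcal{Y}'$ for every $\Theta\in\mathcal{C}_t$, i.e.\ $\mathbf{X}_t^s\in\mathcal{D}_t$. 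The second threshold $T'\geq t_h'$ is precisely what enforces $\Delta\leq\mathcal{H}_{\mathcal{Y}'}^{\infty}$, guaranteeing that ${\mathcal{Y}'}^{\infty}_{\Delta}\neq\emptyset$ and that sharpness is well-defined at this $\Delta$.

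Finally, I would invoke Definition \ref{sharpness}: since $\mathbf{B}\Theta_*\mathbf{X}_*\in\mathcal{Y}'$, there exists $\mathbf{y}^s\in{\mathcal{Y}'}^{\infty}_{\Delta}$ with $\|\mathbf{B}\Theta_*\mathbf{X}_*-\mathbf{y}^s\|_2\leq\mbox{Sharp}_{\mathcal{Y}'}^{\infty}(\Delta)$. I would then pick a surrogate $\mathbf{X}_t^s\in\mathcal{D}$ satisfying $\mathbf{B}\Theta_*\mathbf{X}_t^s=\mathbf{y}^s$; passing through $\mathbf{B}^{-1}$ and using $\|\mathbf{B}^{-1}\|_2=\tilde{\beta}_T$ yields $\|\Theta_*(\mathbf{X}_*-\mathbf{X}_t^s)\|_2\leq\tilde{\beta}_T\,\mbox{Sharp}_{\mathcal{Y}'}^{\infty}(\Delta)$, which, plugged into the step-one inequality, concludes the argument. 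The main obstacle is precisely the construction of this surrogate: one has to argue that the nearby output vector $\mathbf{y}^s$ is actually realizable as $\Theta_*\mathbf{X}$ for some admissible $\mathbf{X}\in\mathcal{D}$. Here the block-diagonal structure of $\Theta_*$ helps (each $\mathbf{x}_m$ can be chosen independently with $d$ degrees of freedom to influence a single output coordinate), together with Assumption \ref{A4}, so that mixing with the interior point $\mathbf{v}$ of $\mathcal{D}_0$ keeps the surrogate inside $\mathcal{D}$.
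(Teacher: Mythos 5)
Your proposal is correct and follows essentially the same route as the paper's proof: the same scaled $\infty$-norm perturbation bound of radius $\frac{2\sqrt{2}K}{\sqrt{2\nu+T'\check{\lambda}}}$ obtained from the doubled confidence radius and Lemma \ref{eigen}, the same inclusion of the shrunk set ${(\mathcal{Y}')}^{\infty}_{l'}$ in the feasible set (with $T'\ge t'_h$ ensuring nonemptiness), and the same OFU--Lipschitz--$\|\mathbf{B}^{-1}\|_2=\tilde{\beta}_T$--sharpness chain; the paper merely phrases your surrogate-action step as the set inclusion ${(\mathcal{Y}')}^{\infty}_{l'}\subseteq\Tilde{\mathcal{Y}}_t'$. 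The one small slip is in the realizability of $\mathbf{y}^s$: it follows from the surjectivity of $\mathbf{X}\mapsto\Theta_*\mathbf{X}$ (block-diagonal structure, $\theta_{*,m}\neq 0$) together with Assumption \ref{A5} ($\mathcal{D}_s\subseteq\mathcal{D}$), rather than Assumption \ref{A4} and mixing with the interior point $\mathbf{v}$ --- a point the paper itself leaves implicit in its final inclusion.
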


\begin{proof}
To prove Lemma \ref{Term1} we define the expanded confidence set  
\begin{equation}\label{Expand_con_set}
    \Tilde{\mathcal{C}}_{t} = \{ \Theta \in \mathbb{R}^{M \times Md} : ||\theta_m - \theta_{*,m}||_{\mathbf{G}_{t,m}} \leq 2\beta_{t,m}, \forall m  \},
\end{equation}
for all agents, where $ \mathcal{C}_{t} \subseteq \Tilde{\mathcal{C}}_{t}$.
This follows the fact that 
\begin{equation}
    \begin{split}
        ||\theta_m - \theta_{*,m}||_{\mathbf{G}_{t,m}}  = & ||\theta_m - \hat{\theta}_{t,m} + \hat{\theta}_{t,m} -\theta_{*,m}||_{\mathbf{G}_{t,m}} \\
         \leq & ||\theta_m - \hat{\theta}_{t,m}||_{\mathbf{G}_{t,m}} + \\ & ||\hat{\theta}_{t,m} -\theta_{*,m}||_{\mathbf{G}_{t,m}} \\
         \leq & 2\beta_{t,m}.
    \end{split}
\end{equation}

Then, in order to prove the statement of the lemma, we will define a set that is contained in the (transformed) set of feasible responses from the central coordinator,
\begin{equation}
    \Tilde{\mathcal{Y}}_t' := \{\mathbf{B} \Theta\mathbf{X}: \mathbf{X} \in \mathcal{D}_t \,, \Theta \in \mathcal{C}_t\},
\end{equation}
and will therefore allow us to bound the distance between the optimal response $\Theta_{*}\mathbf{X}_*$ and the optimistic response $\Tilde{\Theta}_t\mathbf{X}_t$.
In particular, we will show that, with an appropriate choice of shrinkage $\ell'$, the shrunk version (defined in Definition \ref{shrunk_set}) of the feasible set $\mathcal{Y}'$ will be contained in $\Tilde{\mathcal{Y}}_t'$.
To appropriately choose $\ell'$, we first study the range of values that $\frac{1}{\beta_{t,m}} \theta_m^T x_m$ can take for $\Theta \in \mathcal{C}_t$ and $t > T'$, as follows:
\begin{equation}\label{Ball_l_p}
    \begin{split}
        & \frac{1}{\beta_{t,m}}\theta_{m}^T\mathbf{x}_m\\
        & \in [\frac{\theta_{*}^T\mathbf{x}_m}{\beta_{t,m}}- 2||\mathbf{x}_m||_{\mathbf{G}_{t,m}^{-1}}, \frac{\theta_{*}^T\mathbf{x}_m}{\beta_{t,m}} + 2||\mathbf{x}_m||_{\mathbf{G}_{t,m}^{-1}}]\\
        &\subseteq[\frac{\theta_{*}^T\mathbf{x}_m}{\beta_{t,m}}- \frac{2K}{\sqrt{\check{\lambda}(\mathbf{G}_{t,m})}}, \frac{\theta_{*}^T\mathbf{x}_m}{\beta_{t,m}}+ \frac{2K}{\sqrt{\check{\lambda}(\mathbf{G}_{t,m})}}] \\
        & \subseteq [\frac{\theta_{*}^T\mathbf{x}_m}{\beta_{t,m}}- \frac{2\sqrt{2}K}{\sqrt{2\nu + T'\check{\lambda}}}, \frac{\theta_{*}^T\mathbf{x}_m}{\beta_{t,m}}+ \frac{2\sqrt{2}K}{\sqrt{2\nu + T'\check{\lambda}}}] \\
        & = [\frac{\theta_{*}^T\mathbf{x}_m}{\beta_{t,m}} - \mathit{l'}, \frac{\theta_{*}^T\mathbf{x}_m}{\beta_{t,m}}+ \mathit{l'}],
    \end{split}
\end{equation}
when $\mathit{l'} = \frac{2\sqrt{2}K}{\sqrt{2\nu + T'\check{\lambda}}}$. 
Then, we can use this to find a set that is contained in $\mathcal{D}_t$ when $t > T'$,
\begin{equation}
\label{eqn:dt}
    \begin{split}
        \mathcal{D}_t &= \{ \mathbf{X} \in \mathcal{D} : \Theta\mathbf{X} \in \mathcal{Y}, \forall \Theta \in \mathcal{C}_{t}\}\\
        & = \{ \mathbf{X} \in \mathcal{D} : \mathbf{B} \Theta\mathbf{X} \in \mathcal{Y}', \forall \Theta \in \mathcal{C}_{t}\}\\
        & \supseteq \{ \mathbf{X} \in \mathcal{D} : \mathbf{B} \Theta_* \mathbf{X} + \mathbf{v} \in \mathcal{Y}', \forall \mathbf{v} \in \Bar{\mathcal{B}}_{\infty}(\mathit{l'}) \},\\
    \end{split}
\end{equation}
where the first equality is from the definition of the $\mathcal{Y}'$ and the inclusion is due to \eqref{Ball_l_p}.
Then, for $t > T'$, we have that,
\begin{equation}
\begin{split}
    \Tilde{\mathcal{Y}}_t' &= \{\mathbf{B} \Theta\mathbf{X}: \mathbf{X} \in \mathcal{D}_t \,, \Theta \in \mathcal{C}_t\}\\
    & \supseteq \{\mathbf{B} \Theta_* \mathbf{X}: \mathbf{X} \in \mathcal{D}_t\}\\
    & \supseteq \mathbf{B} \Theta_* \mathcal{D} \cap \{y: y + \mathbf{v} \in \mathcal{Y}', \forall \mathbf{v} \in \Bar{\mathcal{B}}_{\infty}(\mathit{l'}) \}\\
    & \supseteq {(\mathcal{Y}')}_{\mathit{l}'}^{\infty},
\end{split}
\end{equation}
where the first inclusion is due to the fact that $\Theta_* \in \mathcal{C}_t$, the second inclusion is due to \eqref{eqn:dt}, and the third inclusion follows from the definition of the shrunk set. 
Then, to ensure ${(\mathcal{Y}')}_{\mathit{l}'}^{\infty}$ is nonempty, we need to ensure that the time horizon $T'$ of the safe exploration phase is long enough such that $\mathit{l}' = \frac{2\sqrt{2}K}{\sqrt{2\nu + T'\check{\lambda}}} \leq \mathcal{H}_{\mathcal{Y}'}^{\infty}$.
In particular, we need that
\begin{equation}\label{th}
    T' \geq \frac{8K^2}{\check{\lambda}(\mathcal{H}_{\mathcal{Y}'}^{\infty})^2} -\frac{2\nu}{\check{\lambda}} =: t'_h.
\end{equation}
Then, combining the result of \eqref{th} with Lemma \ref{eigen}, we need $T' \geq \max(t'_h,t_{\delta'})$.

With this established, we are ready to bound Term I directly.
To do so, we define $y_* =\Theta_{*}\mathbf{X}_* $ and $\Tilde{y}_t:=\Tilde{\Theta}_t\mathbf{X}_t $ where $(\mathbf{X}_{t}, \Tilde{\Theta}_t)= \underset{ (\mathbf{X},\Theta) \in \mathcal{D}_{t} \times \mathcal{C}_{t}}{\argmax}\,f(\Theta\mathbf{X})$.
Additionally, we define $y_*' = \mathbf{B} y_*$ and $\Bar{y} \in \underset{y \in {\mathcal{Y'}}_{\mathit{l'}}^{\infty}}{\argmin} ||y_*' - y||_2$. Note that $f(\Tilde{y}_t) \geq f(\mathbf{B}^{-1} \Bar{y})$.
Then, under the conditions of the lemma,
\begin{equation}\label{General_Term1}
    \begin{split}
        \textbf{Term I} & := f(\Theta_{*}\mathbf{X}_*)  -f(\Tilde{\Theta}_t\mathbf{X}_t) \\
        & = f(y_*)-f(\Tilde{y}_t) \\ 
        & \leq f(y_*) - f(\mathbf{B}^{-1} \Bar{y}) \\
        & \leq  |f(y_*)-f(\mathbf{B}^{-1} \Bar{y})| \\
        & \leq L||y_* - \mathbf{B}^{-1} \Bar{y}||  \\
        & = L||\mathbf{B}^{-1}(y'_* - \Bar{y}')|| \\
        & \leq L||\mathbf{B}^{-1}|| ||y'_* - \Bar{y}'|| \\
        & = L \tilde{\beta}_T ||y'_* - \Bar{y}'|| \\
        & \leq L\tilde{\beta}_T \mbox{Sharp}_{{\mathcal{Y'}}}^{\infty} (\mathit{l}') \\ 
        & = L\tilde{\beta}_T\mbox{Sharp}_{\mathcal{Y}'}^{\infty}\left(\frac{2\sqrt{2}K}{\sqrt{2\nu + T'\check{\lambda}}} \right),
    \end{split}
\end{equation}
where we use the fact that the largest eigenvalue of $\mathbf{B}^{-1}$ is $\tilde{\beta}_T$.
\end{proof}

The following lemma gives an upper bound on the Term II.

\begin{lemma}\label{Term2}
    Let Assumptions \ref{A1}-\ref{A4} hold. Then it holds that
    \begin{equation}
    \begin{split}
        & \sum_{t = T'+1}^{T}\text{Term II} \\ & \leq  L\max({\mathcal{H}_{\mathcal{Y}'}^{\infty}}, 2) \sqrt{2d\log(1 + \frac{TK^2}{d\nu})(T-T')\Bigl(\!\!\sum_{m \in [M]}\!\!\!\beta_{T,m}^2\Bigl)}, 
    \end{split}
         \end{equation}
    when $T' \ge \max(t_{\delta'}, t'_h)$ with some probability.
\end{lemma}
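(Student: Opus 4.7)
The plan is to bound Term II in the familiar linear-UCB style---a confidence-set bound on $\tilde\Theta_t - \Theta_*$ clipped by a uniform range bound, combined with the elliptical potential lemma of Abbasi-Yadkori et al. \cite{abbasi2011improved}---while being careful to carry the per-agent $\beta_{T,m}$'s through so that they appear as $\sum_m \beta_{T,m}^2$ rather than $M\tilde\beta_T^2$ in the final bound.

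First, by the Lipschitz assumption (Assumption \ref{A3}),
\[
f(\tilde\Theta_t \mathbf{X}_t) - f(\Theta_* \mathbf{X}_t) \le L\,\|(\tilde\Theta_t - \Theta_*)\mathbf{X}_t\|_2 = L\sqrt{\sum_{m\in[M]} \!\!\left((\tilde\theta_{t,m} - \theta_{*,m})^T \mathbf{x}_{t,m}\right)^2}.
\]
For each agent $m$, I would bound the scalar $|(\tilde\theta_{t,m} - \theta_{*,m})^T \mathbf{x}_{t,m}|$ in two complementary ways. The first is the standard confidence-set estimate: on the high-probability event that $\tilde\Theta_t,\Theta_* \in \mathcal C_t$, both parameters lie in the expanded set $\tilde{\mathcal C}_t$ of \eqref{Expand_con_set}, and Cauchy--Schwarz in the $\mathbf G_{t,m}$-norm gives $|(\tilde\theta_{t,m} - \theta_{*,m})^T \mathbf{x}_{t,m}| \le 2\beta_{t,m}\|\mathbf{x}_{t,m}\|_{\mathbf G_{t,m}^{-1}} \le 2\beta_{T,m}\|\mathbf{x}_{t,m}\|_{\mathbf G_{t,m}^{-1}}$, using monotonicity of $\beta_{t,m}$ in $t$. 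The second is a range bound: by construction of $\mathcal D_t$ in \eqref{safe-set-def}, both $\mathbf B\tilde\Theta_t \mathbf{X}_t$ and $\mathbf B\Theta_* \mathbf{X}_t$ belong to $\mathcal Y'$, so the $m$-th coordinate of their difference satisfies $\tfrac{1}{\beta_{T,m}}|(\tilde\theta_{t,m} - \theta_{*,m})^T \mathbf{x}_{t,m}| \le \max(\mathcal H_{\mathcal Y'}^\infty, 2)$, where the $2$ comes from Assumption \ref{A2} (which makes each coordinate of points in $\mathcal Y$ lie in $[-1,1]$) and the $\mathcal H_{\mathcal Y'}^\infty$ handles the scaled geometry of $\mathcal Y'$.

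Combining the two bounds via $\min(2\beta_{T,m}\|\mathbf{x}_{t,m}\|_{\mathbf G_{t,m}^{-1}},\,\beta_{T,m}\max(\mathcal H_{\mathcal Y'}^\infty,2)) \le \beta_{T,m}\max(\mathcal H_{\mathcal Y'}^\infty,2)\min\!\left(1,\|\mathbf{x}_{t,m}\|_{\mathbf G_{t,m}^{-1}}\right)$, then squaring, summing over $m$, and substituting into the Lipschitz bound yields the per-round estimate
\[
\text{Term II} \le L\,\max(\mathcal H_{\mathcal Y'}^\infty,2)\sqrt{\sum_{m\in[M]}\beta_{T,m}^2\min\!\left(1,\|\mathbf{x}_{t,m}\|_{\mathbf G_{t,m}^{-1}}^2\right)}.
\]
Summing over $t = T'+1,\dots,T$, applying Cauchy--Schwarz in time, and then applying the elliptical potential lemma per agent, namely $\sum_{t=T'+1}^T \min(1, \|\mathbf{x}_{t,m}\|_{\mathbf G_{t,m}^{-1}}^2) \le 2d\log(1 + TK^2/(d\nu))$, delivers exactly the stated bound. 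The probabilistic qualifier is inherited from the event used in the proof of Lemma \ref{Term1} (the joint event from Theorem \ref{C-set} and Lemma \ref{eigen}, already conditioned on), so no further union bound is needed.

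The main obstacle will be the second (range) bound on the per-agent deviation: establishing that the $m$-th coordinate range of $\mathcal Y'$ is at most $\max(\mathcal H_{\mathcal Y'}^\infty, 2)$ requires combining Assumption \ref{A2} with the diagonal scaling $\mathbf B$ of \eqref{transform} and is the opposite direction of the usual inradius--diameter inequality, so one must be careful to use both the ``2'' clause (from the response range) and the $\mathcal H_{\mathcal Y'}^\infty$ clause (from the scaled geometry) to cover the regimes in which each is tight. The remainder of the argument is a routine linear-UCB template, but preserving the per-agent $\beta_{T,m}$'s outside the elliptical potential by clipping \emph{before} the Cauchy--Schwarz in time is what keeps the bound privacy-level aware.
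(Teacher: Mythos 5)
Your overall template---Lipschitz decomposition into per-agent scalars $r_{t,m}$, the confidence-set bound $r_{t,m}\le 2\beta_{T,m}\|\mathbf{x}_{t,m}\|_{\mathbf{G}_{t,m}^{-1}}$, the min-combination algebra, Cauchy--Schwarz in time, and the per-agent elliptic potential lemma---matches the paper's proof step for step. But the one step you flag as "the main obstacle" is in fact a genuine gap, and your proposed justification for it does not work. You claim the clip $r_{t,m}\le \beta_{T,m}\max(\mathcal{H}_{\mathcal{Y}'}^{\infty},2)$ follows because both $\mathbf{B}\Tilde{\Theta}_t\mathbf{X}_t$ and $\mathbf{B}\Theta_*\mathbf{X}_t$ lie in $\mathcal{Y}'$, so the coordinate difference is at most the coordinate diameter of $\mathcal{Y}'$, which you assert is at most $\max(\mathcal{H}_{\mathcal{Y}'}^{\infty},2)$. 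This assertion is false in general: the maximum shrinkage is an inradius-type quantity and does not dominate the diameter. For the cube $\mathcal{Y}'=[-a,a]^M$ with $a>2$ one has $\mathcal{H}_{\mathcal{Y}'}^{\infty}=a$ but coordinate diameter $2a>\max(a,2)$. The fallback "2" clause also fails, because Assumption \ref{A2} bounds the \emph{observed responses} $y_{t,m}$, not the set $\mathcal{Y}$; the optimistic response $\Tilde{\Theta}_t\mathbf{X}_t$ is only known to lie in $\mathcal{Y}$, which need not be contained in $[-1,1]^M$, and even then the scaling by $\mathbf{B}$ would require $\beta_{T,m}\ge 1$, which the assumptions do not guarantee. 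Since $r_{t,m}\le\min(a,b)$ requires \emph{both} bounds to be valid, the clipped estimate---and everything downstream of it---is unsupported.

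The paper obtains this clip by an entirely different mechanism, which is why the hypothesis $T'\ge t'_h$ appears in the lemma statement and never appears in your argument (a tell-tale sign of the missing idea). By Lemma \ref{eigen} (pure exploration, requiring $T'\ge t_{\delta'}$), $\check{\lambda}(\mathbf{G}_{t,m})\ge \nu+\tfrac{T'\check{\lambda}}{2}$, hence
\begin{equation*}
    2\beta_{T,m}\|\mathbf{x}_{t,m}\|_{\mathbf{G}_{t,m}^{-1}} \;\le\; \beta_{T,m}\,\frac{2\sqrt{2}K}{\sqrt{2\nu+T'\check{\lambda}}} \;\le\; \beta_{T,m}\,\mathcal{H}_{\mathcal{Y}'}^{\infty},
\end{equation*}
where the last inequality is exactly the condition $T'\ge t'_h$. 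So the clip at $\beta_{T,m}\mathcal{H}_{\mathcal{Y}'}^{\infty}$ is a consequence of the exploration phase conditioning the Gram matrices, not of any geometric containment of the two responses in $\mathcal{Y}'$; and the "2" in $\max(\mathcal{H}_{\mathcal{Y}'}^{\infty},2)$ is purely algebraic, arising from $\min(\mathcal{H}_{\mathcal{Y}'}^{\infty},\,2\|\mathbf{x}_{t,m}\|_{\mathbf{G}_{t,m}^{-1}})\le \max(\mathcal{H}_{\mathcal{Y}'}^{\infty},2)\min(1,\|\mathbf{x}_{t,m}\|_{\mathbf{G}_{t,m}^{-1}})$, not from any response-range assumption. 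Replacing your range argument with this eigenvalue argument repairs the proof, and the remainder of your write-up then goes through as stated.
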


\begin{proof}
Throughout the proof, we take $t > T'$ without further reference.
First, note that Term II can be bounded as follows:
\begin{equation}
    \begin{split}
        \textbf{Term II} & := f(\Tilde{\Theta}_t\mathbf{X}_t)- f(\Theta_{*}\mathbf{X}_t) \\
        & \leq L||\Tilde{\Theta}_t\mathbf{X}_t - \Theta_{*}\mathbf{X}_t||_2  \\
        & = L\sqrt{\sum_{m \in [M]} (\mathbf{x}_{t,m}^T\Tilde{\theta}_{t,m} - \mathbf{x}_{t,m}^T \theta_{*,m})^2} \\
        & \leq L\sqrt{\sum_{m \in [M]} r_{t,m}^2}, \\
    \end{split}
\end{equation}
where $r_{t,m} :=  \mathbf{x}_{t,m}^T\Tilde{\theta}_{t,m} - \mathbf{x}_{t,m}^T \theta_{*,m}$.
Then, using the definition of the confidence set in Theorem \ref{C-set}, we have that
\begin{equation}\label{r2_1}
\begin{split}
     r_{t,m} & :=  \mathbf{x}_{t,m}^T\Tilde{\theta}_{t,m} - \mathbf{x}_{t,m}^T \theta_{*,m} \\ 
     & \leq ||\mathbf{x}_{t,m}||_{\mathbf{G}_{t,m}^{-1}}||\Tilde{\theta}_{t,m}-\theta_{*,m}||_{\mathbf{G}_{t,m}} \\
     & \leq ||\mathbf{x}_{t,m}||_{\mathbf{G}_{t,m}^{-1}}||\Tilde{\theta}_{t,m} -\hat{\theta}_{t , m} +\hat{\theta}_{t , m} - \theta_{*,m}||_{\mathbf{G}_{t,m}} \\
     & \leq 2\beta_{t,m}||\mathbf{x}_{t,m}||_{\mathbf{G}_{t,m}^{-1}}\\
     & \leq 2\beta_{T,m}||\mathbf{x}_{t,m}||_{\mathbf{G}_{t,m}^{-1}},
\end{split}
\end{equation}
where the last inequality uses the fact that $\beta_{t,m}$ is monotone in $t$.
In order to bound $r_{t,m}$, we first note that, using the specification of the pure exploration phase, it holds that:
\begin{equation}\label{r2_2}
\begin{split}
      r_{t,m} & \leq 2\beta_{T,m}||\mathbf{x}_{t,m}||_{\mathbf{G}_{t,m}^{-1}}\\
     & \leq \frac{2\sqrt{2} \beta_{T,m} K}{\sqrt{2\nu + T'\check{\lambda}}}\\
    & \leq \frac{2\sqrt{2}\hat{\beta_T}K}{\sqrt{2\nu + T'\check{\lambda}}}\\
     & \leq \beta_{T,m} \mathcal{H}_{\mathcal{Y}}^{\infty}.
\end{split}
\end{equation}
It follows that:  
\begin{equation}
\begin{split}
       r_{t,m} & \leq \min(\beta_{T,m}\mathcal{H}_{\mathcal{Y'}}^{\infty} ,2\beta_{T,m}||\mathbf{x}_{t,m}||_{\mathbf{G}_{t,m}^{-1}} ) \\
       & \leq \beta_{T,m}\min(\mathcal{H}_{\mathcal{Y'}}^{\infty}, 2||\mathbf{x}_{t,m}||_{\mathbf{G}_{t,m}^{-1}})\\
       & \leq \beta_{T,m}\max(\mathcal{H}_{\mathcal{Y'}}^{\infty}, 2) \min(1,||\mathbf{x}_{t,m}||_{\mathbf{G}_{t,m}^{-1}})\\
       & \leq \beta_{T,m}\max(\mathcal{H}_{\mathcal{Y'}}^{\infty}, 2) \min(1,||\mathbf{x}_{t,m}||_{\mathbf{G}_{t,m}^{-1}}).\\
\end{split}
\end{equation}

To continue, we use the so-called elliptic potential lemma.
We give the version of this lemma from \cite{abbasi2011improved} below.
\begin{lemma}[Lemma 11 of \cite{abbasi2011improved}] 
\label{lem:ellip_pot}
Consider a sequence $\{\mathbf{x}_{t=1}^{\infty}\}$, in $ \mathbb{R}^d$ such that $||\mathbf{x}_{t}||_2 \leq K$ for all $t \in [T]$.
Also, let $\mathbf{G}_{t} = \sum_{i=1}^{t-1} \mathbf{x}_{i}\mathbf{x}_{i}^T + \nu I$, where $\nu$ is a positive scalar.
Then, it follows that,
    \begin{equation}
    \begin{split}
         \sum_{t=1}^T &\min(||\mathbf{x}_{t}||_{\mathbf{G}_{t}^{-1}}^2 , 1)\\
          &\leq 2\left(d\log(\frac{trace(\nu I)+TK^2}{d}) - \log (\det(\nu I)\right).\\
    \end{split}
    \end{equation}
\end{lemma}

It follows from Lemma \ref{lem:ellip_pot} that:
\begin{equation}
\begin{split}
     r'_{t,m} & := \sum_{t=T'}^T \min(||\mathbf{x}_{t,m}||^2_{\mathbf{G}_{t,m}^{-1}} , 1)\\
     & \leq \sum_{t=1}^T \min(||\mathbf{x}_{t,m}||^2_{\mathbf{G}_{t,m}^{-1}} , 1)\\
     &\leq 2\left(d\log(\frac{trace(\nu I)+TK^2}{d})- \log (\det(\nu I)\right)\\
      &\leq 2d\log(1 + \frac{TK^2}{d\nu}).\\
\end{split}
\end{equation}

Then, by Cauchy-Schwarz, we have that:
\begin{equation}\label{term2_bound}
    \begin{split}
        & \sum_{t=T'}^T\mathbf{Term\ II}\\
         & \leq L \sum_{t=T'}^T\sqrt{\sum_{m \in [M]} r_{t,m}^2} \\
         & \leq L \sqrt{(T-T')\sum_{m \in [M]}\sum_{t=T'}^T r_{t,m}^2} \\
         & \leq L\max(\mathcal{H}_{\mathcal{Y'}}^{\infty}, 2)\sqrt{(T-T')r'_{t,m}\sum_{m \in [M]}\beta_{T,m}^2} \\
         & \leq L\max(\mathcal{H}_{\mathcal{Y'}}^{\infty}, 2)\\  & \times 
             \left(\sqrt{2d\log\left(1 + \frac{TK^2}{d\nu}\right)(T-T')\Bigl(\!\!\sum_{m \in [M]}\!\!\!\beta_{T,m}^2\Bigl)}\right).\\
    \end{split}
\end{equation}

\end{proof}

Now we complete the proof of Theorem \ref{main_2}.
First note that, the instantaneous regret in the pure exploration phase can be bounded as
\begin{equation}\label{pure_bound}
    \begin{split}
        r_{t < T'} & := f(\Theta_{*}\mathbf{X}_*) - f(\Theta_{*}\mathbf{X}_t) \\
         & \leq L||\Theta_{*}\mathbf{X}_* - \Theta_{*}\mathbf{X}_t||_2\\
         & \leq L\sqrt{M} \underset{m\in[M]}{max}(\mathbf{x}_{*,m}^T\theta_{*,m} - \mathbf{x}_{t,m}^T \theta_{*,m})\\
         & \leq 2LKS\sqrt{M}.
    \end{split}
\end{equation}

Then, using Lemma \ref{Term1}, Lemma \ref{Term2} and \ref{pure_bound}, Theorem \ref{main_2} holds:
\begin{equation}
    \begin{split}
        R_T  = & \sum_{t=1}^{T'}r_{t < T'}+\sum_{t=T'}^T\textbf{Term I}  +\sum_{t=T'}^T\textbf{Term II} \\
         \leq &  \mbox{ } 2LKST'\sqrt{M} \\ 
         & +  L\hat{\beta_T}(T-T')\mbox{Sharp}_{\mathcal{Y'}}^{\infty}(\frac{2\sqrt{2}K}{\sqrt{2\nu + T'\check{\lambda}}})\\
        & + L\max({\mathcal{H}_{\mathcal{Y'}}^{\infty}}, 2)\\
        & \quad \times \sqrt{2d\log\left(1 + \frac{TK^2}{d\nu}\right)(T-T')\Bigl(\!\!\sum_{m \in [M]}\!\!\!\beta_{T,m}^2\Bigl)}.
    \end{split}
\end{equation}

\subsection{Proof of Theorem \ref{main_3}}\label{Appen4}

In order to prove Theorem \ref{main_3}, we will first characterize the sharpness of $\mathit{S}'$, the transformed of the simplex $\mathit{S}$ (defined in \eqref{H-simp}), in Lemmas \ref{transformed_sharp} and \ref{min_sharp}, which are proved in Appendix\ref{trans_proof} and Appendix\ref{proof_min}, and then will complete the proof in Appendix\ref{compl_proof}. First let $\rho_m = \frac{c_m}{\beta_{T,m}}$ for all $m \in [M]$.

\begin{lemma}
\label{transformed_sharp}
    For simplex $\mathit{S}'$ and for an arbitrary $0 \leq \Delta \leq \mathcal{H}_{\mathit{S}}^{\infty}$, it holds:
\begin{equation}\label{eq_sharp}
    \mbox{Sharp}_{\mathit{S}'}^{\infty} ({\Delta}) = \,\Delta \sqrt{(M-1)+ ( 2q'\tilde{\rho} - 1)^2},
\end{equation}    
where $\tilde{\rho} = \underset{m \in [M]}{\max}\rho_m$. 
\end{lemma}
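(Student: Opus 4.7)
The plan is to reduce the sharpness computation to a supremum over the finitely many vertices of $S'$ and then explicitly project each vertex onto the shrunk set $(S')_\Delta^\infty$. Since the distance function $x \mapsto \inf_{y \in C}\|y-x\|_2$ is convex in $x$ for any convex set $C$, its supremum over the polytope $S'$ is attained at a vertex, so the task reduces to a finite comparison.

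The first step is to write out the facet description. Using $\rho_m = c_m/\beta_{T,m}$, the transformed simplex $S' = \mathbf{B} S$ has $M+1$ facets: one ``slanted'' inequality $\sum_m y_m/\rho_m \le 1/2$, and $M$ lower bounds $y_m \ge -1/(2q\beta_{T,m})$. Because every constraint is of the form $a^T y \le b$, the shrunk set $(S')_\Delta^\infty$ is obtained by replacing each right-hand side $b$ with $b - \Delta\|a\|_1$; explicitly, this gives $\sum_m y_m/\rho_m \le 1/2 - \Delta q'$ (where $q' = \sum_m 1/\rho_m$) together with $M$ shifted lower bounds $y_m \ge -1/(2q\beta_{T,m}) + \Delta$. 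In particular, $(S')_\Delta^\infty$ is again a simplex whose vertices $\hat v_0', \hat v_1', \ldots, \hat v_M'$ correspond in a natural way to the vertices $v_0', v_1', \ldots, v_M'$ of $S'$ ($v_0'$ activates all lower bounds, while $v_k'$ for $k\ge 1$ deactivates the $k$th lower bound in favor of the slanted facet).

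Next, I will compute the projection of each $v_k'$ onto $(S')_\Delta^\infty$ by KKT. A direct computation shows that for $k \ne 0$ with $q'\rho_k \ge 1/2$, the projection is exactly $\hat v_k'$: the $m \ne k$ coordinates differ by $\Delta$ each and the $k$th coordinate differs by $\Delta(1-2q'\rho_k)$, giving
\[
\|v_k' - \hat v_k'\|_2 \;=\; \Delta\sqrt{(M-1)+(2q'\rho_k-1)^2}.
\]
For $k \ne 0$ with $q'\rho_k < 1/2$, the slanted constraint is slack at the projection and the distance is only $\Delta\sqrt{M-1}$; for $v_0'$, all lower bounds remain active and the projection $v_0'+\Delta\mathbf{1}$ is at distance $\Delta\sqrt{M}$. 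Since $\tilde\rho \ge \rho_m$ for every $m$ gives $q'\tilde\rho = \tilde\rho \sum_m 1/\rho_m \ge M \ge 1$, we get $(2q'\tilde\rho-1)^2 \ge 1$, so the vertex $v_{k^*}'$ with $\rho_{k^*} = \tilde\rho$ dominates both $v_0'$ and every other $v_k'$, yielding the stated formula.

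The main obstacle will be the KKT verification that $\hat v_k'$ is genuinely the projection (rather than a relative-interior point of some face): one must check that the multipliers attached to the $M-1$ active lower bounds and to the slanted facet are all non-negative, and this is exactly where the dichotomy $q'\rho_k \gtrless 1/2$ enters. A minor bookkeeping point is ensuring the hypothesis $\Delta \le \mathcal{H}_{S'}^\infty = 1/(2q')$ keeps $(S')_\Delta^\infty$ non-empty and places every claimed projection vertex inside the shrunk simplex.
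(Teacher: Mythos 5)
Your proposal is correct and follows essentially the same route as the paper's proof: reduce the sharpness to a maximum over vertices via convexity of the distance function, describe the shrunk simplex by shifting each right-hand side $b_j$ to $b_j - \Delta\|\mathbf{a}'_j\|_1$, and verify through normal-cone/KKT multipliers that each vertex of $\mathit{S}'$ projects exactly onto the corresponding vertex of $(\mathit{S}')_{\Delta}^{\infty}$, then compare distances using $q'\tilde{\rho}\geq M$. The only cosmetic difference is your dichotomy $q'\rho_k \gtrless 1/2$: since $q'\rho_k = \sum_{m'}\rho_k/\rho_{m'} \geq 1$ for every $k$, the case $q'\rho_k < 1/2$ is vacuous, which is precisely why the paper's multiplier $\gamma = 2q'\rho_k - 1$ is always positive and no case split is needed.
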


\subsubsection{Proof of Lemma \ref{transformed_sharp}}
\label{trans_proof}

First, we will give an exact form of the shrunk version of the transformed simplex $\mathit{S}' = \mathit{S}(\mathbf{A}',\mathbf{b})$, where 
\begin{equation}\label{Ap_matrix}
  \mathbf{A}' = \begin{bmatrix}
  \frac{\beta_{T,1}}{c_1} & \frac{\beta_{T,2}}{c_2} & \cdots & \frac{\beta_{T,M}}{c_M}\\ -\beta_{T,1} & 0 & \cdots & 0 \\ 0 & -\beta_{T,2} & \cdots & \vdots \\ \vdots  &  & \ddots & 0  \\ 0 & \cdots & 0 & -\beta_{T,M}
\end{bmatrix},
\end{equation}
given  Equation \eqref{A_matrix} and the fact that $\mathbf{A}' = \mathbf{A}\mathbf{B}^{-1}$.

Throughout this section, we will use the notation  $q = \sum_{m=1}^M \frac{1}{c_m}$ and $ q' = \sum_{m=1}^{M} \frac{1}{\rho_m}$.
 
\begin{lemma}\label{max_shrink}
For $0 < \Delta \leq \mathcal{H}_{\mathit{S}'}^{\infty}$ the shrunk version of simplex $\mathit{S}'$ is
\begin{equation}
    \begin{split}
    (\mathit{S}')_{\Delta}^{\infty} & = \bigg\{\mathbf{x}\in\mathbb{R}^M : x_m \geq \Delta - \frac{1}{2q \beta_{T,m}} \ \forall m \in [M],\\
    & \qquad \qquad \qquad \sum_{m=1}^{M}\frac{\beta_{T,m}}{c_m} x_m \leq \frac{1}{2} - \Delta q' \bigg\}.
    \end{split}
\end{equation}
\end{lemma}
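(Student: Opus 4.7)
The plan is to derive the claimed description by a direct application of the definition of the shrunk set to the polyhedral form of $\mathit{S}'$. By Definition \ref{shrunk_set}, $x \in (\mathit{S}')_\Delta^\infty$ iff $x + v$ satisfies every defining inequality of $\mathit{S}'$ for all $v$ with $\|v\|_\infty \leq \Delta$. For any linear constraint $\mathbf{a}^T y \leq b$, robustness to $v \in \bar{\mathcal{B}}_\infty(\Delta)$ is equivalent to $\mathbf{a}^T x + \sup_{\|v\|_\infty \leq \Delta} \mathbf{a}^T v \leq b$, which by the standard $\ell_1/\ell_\infty$ duality gives $\mathbf{a}^T x \leq b - \Delta \|\mathbf{a}\|_1$.

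The execution then reduces to writing out the $M+1$ rows of $\mathbf{A}'$ in \eqref{Ap_matrix}. The top row has $\ell_1$ norm $\sum_{m=1}^M \beta_{T,m}/c_m = \sum_{m=1}^M 1/\rho_m = q'$, producing the shrunk hyperplane constraint $\sum_m (\beta_{T,m}/c_m) x_m \leq \tfrac{1}{2} - \Delta q'$. Each axis-aligned row $-\beta_{T,m} e_m^T$ has $\ell_1$ norm $\beta_{T,m}$, producing the shrunk bound $-\beta_{T,m} x_m \leq \tfrac{1}{2q} - \Delta \beta_{T,m}$, which rearranges to $x_m \geq \Delta - 1/(2q\beta_{T,m})$. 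Assembling these gives exactly the set in the statement.

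Finally, I would note that membership in the original $\mathit{S}'$ need not be imposed separately, because the $v=0$ case of the robust condition already implies $x \in \mathit{S}'$, and moreover for $\Delta \geq 0$ each shrunk inequality is at least as strong as its unshrunk counterpart. Nonemptiness of this polyhedron under the hypothesis $0 < \Delta \leq \mathcal{H}_{\mathit{S}'}^\infty$ follows immediately from Definition \ref{max_shrinkage}.

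There is no real technical obstacle here; the proof is essentially just unpacking the definition against the rows of $\mathbf{A}'$. The only point requiring a bit of care is the conversion between the two parameterizations $c_m, \beta_{T,m}, q$ and $\rho_m, q'$, specifically confirming that $\sum_m \beta_{T,m}/c_m$ is indeed $q'$ rather than $q$ (since $\rho_m = c_m/\beta_{T,m}$); this is the only place where an off-by-$\beta_{T,m}$ slip could occur.
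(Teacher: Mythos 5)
Your proposal is correct and follows essentially the same route as the paper: both proofs shrink each of the $M+1$ halfspaces individually by replacing $\mathbf{a}^T x \leq b$ with $\mathbf{a}^T x \leq b - \Delta\|\mathbf{a}\|_1$ (the support function of the $\ell_\infty$ ball), compute the $\ell_1$ norms of the rows of $\mathbf{A}'$, and intersect. Your computations of the row norms ($q'$ for the top row, $\beta_{T,m}$ for the axis rows) and the resulting inequalities match the paper's exactly.
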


\begin{proof}
Since the simplex is defined as the intersection of $M + 1$ halfspaces, we study the sharpness of the simplex by studying the sharpness of each halfspace individually.
In particular, we write the shrunk version of each halfspace as $(\mathit{S}')_{\Delta,j}^{\infty}$ such that the shrunk version of the simplex can be defined as $(\mathit{S}')_{\Delta}^{\infty} = \bigcap_{j \in \{ 0,...,M\}} (\mathit{S}')_{\Delta,j}^{\infty}$.
With the $\mathbf{a}_j'$ denoting the $j$th row of $\mathbf{A}'$ (such that $\mathbf{A}' = [(\mathbf{a}_1')^T, \ (\mathbf{a}_2')^T, \ ...\,,\ (\mathbf{a}_{M+1}')^T ]^T$, it holds that:
\begin{equation}
\begin{split}
     (\mathit{S}')_{\Delta,j}^{\infty} & := \{ \mathbf{x}\in\mathbb{R}^M : {\mathbf{a}'_{j}}^T(\mathbf{x+v}) \leq b_j , v\in \Bar{\mathcal{B}}_{\infty}(\Delta) \} \\
     & = \{ \mathbf{x}\in\mathbb{R}^M : {\mathbf{a}'_{j}}^T\mathbf{x}+ \max_{v\in \Bar{\mathcal{B}}_{\infty}(\Delta)} \mathbf{v}^T \mathbf{a}'_{j} \leq b_j \} \\
    & = \{\mathbf{x}\in\mathbb{R}^M : {\mathbf{a}'_{j}}^T\mathbf{x} \leq b_j - 
    \Delta||\mathbf{a}'_{j}||_1 \}.
\end{split}
\end{equation}
Note that the shrunk version of each halfspace is still a halfspace.
We will then express each of these halfspaces in closed form.

First, we consider the case where $j \neq 0$.
In particular, for $j \neq 0$, it holds that ${\mathbf{a}'}_j^T\mathbf{x} = -\beta_{T,j}x_j$, $||\mathbf{a}'_{j}||_1 = \beta_{T,j}$, and $b_j = 1/2q$.
Therefore, for $j \neq 0$,
\begin{equation}\label{d_const_1}
\begin{split}
(\mathit{S}')_{\Delta,j}^{\infty} & = \left\{\mathbf{x}\in\mathbb{R}^M : -\beta_{T,m}x_m \leq \frac{1}{2 q} - \Delta \beta_{T,m} \right\}\\
& = \left\{\mathbf{x}\in\mathbb{R}^M : x_m \geq \Delta - \frac{1}{2q \beta_{T,m}} \right\}.
\end{split} 
\end{equation}
Then, we consider the case where $j = 0$.
Since ${\mathbf{a}'}_0^T\mathbf{x} = \sum_{m=1}^{M}\frac{\beta_{T,m}}{c_m} x_m$, and $||\mathbf{a}'_{0}||_1 = \sum_{m=1}^{M}\frac{\beta_{T,m}}{c_m} = q'$, and $b_0 = 1/2$, it follows that
\begin{equation}\label{d_const_2}
(\mathit{S}')_{\Delta,0}^{\infty} = \left\{\mathbf{x}\in\mathbb{R}^M : \sum_{m=1}^{M}\frac{\beta_{T,m}}{c_m} x_m \leq \frac{1}{2} - \Delta q' \right\}.
\end{equation}
Finally, combining \eqref{d_const_1} and \eqref{d_const_2}, it holds that
\begin{equation}
\begin{split}
    (\mathit{S}')_{\Delta}^{\infty} & = \bigcap_{j \in \{ 0,...,M\}} (\mathit{S}')_{\Delta,j}^{\infty}\\
    & = \bigg\{\mathbf{x}\in\mathbb{R}^M : x_m \geq \Delta - \frac{1}{2q \beta_{T,m}} \ \forall m \in [M],\\
    & \qquad \qquad \qquad \sum_{m=1}^{M}\frac{\beta_{T,m}}{c_m} x_m \leq \frac{1}{2} - \Delta q' \bigg\},
\end{split}
\end{equation}
which is completing the proof. 
\end{proof}

Next, we provide an exact expression for the maximum shrinkage of the simplex.

\begin{lemma}
      The maximum shrinkage of simplex $ \mathit{S}'$ is $\mathcal{H}_{\mathit{S}'}^{\infty} = \frac{1}{2q'}$.
\end{lemma}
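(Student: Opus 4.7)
The plan is to leverage the explicit description of the shrunk set $(\mathit{S}')_\Delta^\infty$ from Lemma \ref{max_shrink} and determine the largest $\Delta$ for which this set is nonempty. By Definition \ref{max_shrinkage}, $\mathcal{H}_{\mathit{S}'}^{\infty}$ is precisely this supremum, so the question reduces to a one-parameter feasibility problem.

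The key observation is that the first $M$ constraints from Lemma \ref{max_shrink} are decoupled lower bounds $x_m \geq \Delta - \frac{1}{2q\beta_{T,m}}$, while the remaining constraint is an upper bound on the linear functional $\sum_{m=1}^M \frac{\beta_{T,m}}{c_m} x_m \leq \frac{1}{2} - \Delta q'$. Since all coefficients $\beta_{T,m}/c_m$ are strictly positive, the left-hand side is minimized over the box constraints by choosing each $x_m$ at its lower bound. Substituting these values, and using the identities $q = \sum_m 1/c_m$ and $q' = \sum_m \beta_{T,m}/c_m$, the minimum simplifies to $\Delta q' - \tfrac{1}{2}$. Feasibility is therefore equivalent to
\begin{equation*}
\Delta q' - \tfrac{1}{2} \leq \tfrac{1}{2} - \Delta q',
\end{equation*}
which rearranges to $\Delta \leq \frac{1}{2q'}$, giving $\mathcal{H}_{\mathit{S}'}^{\infty} \leq \frac{1}{2q'}$.

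To obtain the matching lower bound, I would exhibit a specific point witnessing that $(\mathit{S}')_{\frac{1}{2q'}}^\infty$ is nonempty. Taking $x_m = \frac{1}{2q'} - \frac{1}{2q\beta_{T,m}}$ meets each coordinate lower bound with equality and, by the same calculation above, makes the linear constraint hold with equality as well. Hence $\Delta = \frac{1}{2q'}$ is admissible, and the supremum in Definition \ref{max_shrinkage} is attained, proving $\mathcal{H}_{\mathit{S}'}^{\infty} = \frac{1}{2q'}$.

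I do not expect any substantial obstacle here: the proof is essentially a linear programming feasibility check with a single nontrivial constraint once the box constraints are activated. The only care needed is in bookkeeping the two sums $q$ and $q'$ and verifying that the telescoping $\frac{1}{2q}\sum_m \frac{1}{c_m} = \frac{1}{2}$ produces the clean cancellation that makes the critical $\Delta$ depend only on $q'$ (and not on $q$ or the individual $\beta_{T,m}$).
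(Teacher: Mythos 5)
Your proposal is correct and follows essentially the same route as the paper's proof: both use the explicit description of $(\mathit{S}')_{\Delta}^{\infty}$ from Lemma \ref{max_shrink}, derive $\Delta \leq \frac{1}{2q'}$ by substituting the coordinate lower bounds into the coupling constraint (exploiting the cancellation $\frac{1}{2q}\sum_m \frac{1}{c_m} = \frac{1}{2}$), and verify attainment with the identical witness point $x_m = \frac{1}{2q'} - \frac{1}{2q\beta_{T,m}}$. Your LP-feasibility framing (the positive-coefficient functional is minimized at the corner of the box) is a slightly cleaner packaging of the same computation the paper performs with an arbitrary feasible point.
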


\begin{proof}
Since $\mathcal{H}_{\mathit{S}}^{\infty} = \sup\{\Delta : (\mathit{S}')_{\Delta}^{\infty} \neq \emptyset \}$, we will prove the lemma by showing that $\{\Delta : (\mathit{S}')_{\Delta}^{\infty} \neq \emptyset \}$ is bounded by $\frac{1}{2q'}$ and then showing that $\frac{1}{2q'}$ bound is in fact inside the set.

It follows from Lemma \ref{max_shrink} that for any $\Delta \in \{\Delta : (\mathit{S}')_{\Delta}^{\infty} \neq \emptyset \}$, there exists $x \in (\mathit{S}')_{\Delta}^{\infty}$ such that
\begin{equation*}
    x_m \geq \Delta - \frac{1}{2q \beta_{T,m}} \ \forall m \in [M] \ \text{, and} \ \sum_{m=1}^{M}\frac{\beta_{T,m}}{c_m} x_m \leq \frac{1}{2} - \Delta q'.
\end{equation*}
Therefore, we can combine these two inequalities to obtain the following:
\begin{equation}\label{delta_bound}
    \begin{split}
       \Delta & \leq \frac{\frac{1}{2} - \sum_{m=1}^{M}\frac{\beta_{T,m}}{c_m}x_m}{q'}\\ 
        & \leq \frac{\frac{1}{2} - \sum_{m=1}^{M}\frac{\beta_{T,m}}{c_m}(\Delta - \frac{1}{2q \beta_{T,m}})}{q'}\\ 
       & = \frac{\frac{1}{2} + \sum_{m=1}^{M}\frac{(\frac{1}{2q} - \Delta\beta_{T,m})}{c_m}}{q'} \\
       &= \frac{\frac{1}{2} + \frac{1}{2q}\sum_{m=1}^{M}\frac{1}{c_m} -\Delta \sum_{m=1}^{M}\frac{\beta_{T,m}}{c_m}}{q'} \\
       & = \frac{1 - \Delta q'} {q'}, 
    \end{split}
\end{equation}
where the last equality uses the fact that $\sum_{m=1}^M \frac{1}{c_m} = q$ and $\sum_{m=1}^{M} \frac{\beta_{T,m}}{c_m} = q'$.
Rearranging \eqref{delta_bound}, we get
\begin{equation*}
    \Delta \leq \frac{1 - \Delta q'} {q'} \quad \Longleftrightarrow \quad \Delta \leq \frac{1}{2 q'},
\end{equation*}
and therefore, the set $\{\Delta : (\mathit{S}')_{\Delta}^{\infty} \neq \emptyset \}$ is bounded by $\frac{1}{2 q'}$.

Now, we show that $\frac{1}{2 q'}$ is in $\{\Delta : (\mathit{S}')_{\Delta}^{\infty} \neq \emptyset \}$.
To do so, we show that the point $w \in \mathbb{R}^m$, defined for all $m \in [M]$, as
\begin{equation*}
    w_m = \frac{1}{2 q'} - \frac{1}{2 q\beta_{T,m}},
\end{equation*}
is in $(\mathit{S}')_{\Delta}^{\infty}$ when $\Delta = \frac{1}{2 q'}$.
From Lemma \ref{max_shrink}, this is equivalent to both
\begin{enumerate}
    \item $w_m \geq \frac{1}{2 q'} - \frac{1}{2q \beta_{T,m}} \quad \forall m \in [M]$,
    \item $\sum_{m=1}^{M}\frac{\beta_{T,m}}{c_m} x_m \leq \frac{1}{2} - \frac{1}{2 q'} q'$.
\end{enumerate}
First, 1) holds by definition as
\begin{equation}
    w_m = \frac{1}{2 q'} - \frac{1}{2 q\beta_{T,m}} \geq \frac{1}{2 q'} - \frac{1}{2q \beta_{T,m}},
\end{equation}
for all $m \in [M]$.
Then, we show that 2) holds,
\begin{equation}
\begin{split}
    & \sum_{m=1}^{M}\frac{\beta_{T,m}}{c_m} w_m \leq \frac{1}{2} - \frac{1}{2 q'} q'\\
    \Longleftrightarrow \quad & \sum_{m=1}^{M}\frac{\beta_{T,m}}{c_m} \left( \frac{1}{2 q'} - \frac{1}{2 q\beta_{T,m}} \right) \leq \frac{1}{2} - \frac{1}{2 q'} q'\\
    \Longleftrightarrow \quad & \sum_{m=1}^{M}\frac{\beta_{T,m}}{c_m} \frac{1}{2 q'} - \sum_{m=1}^{M}\frac{1}{c_m} \frac{1}{2 q} \leq \frac{1}{2} - \frac{1}{2 q'} q'\\
    \Longleftrightarrow \quad & q' \frac{1}{2 q'} - q \frac{1}{2 q} \leq \frac{1}{2} - \frac{1}{2 q'} q'\\
    \Longleftrightarrow \quad & 0 \leq 0.\\
\end{split}
\end{equation}
Therefore, if $\Delta = \frac{1}{2 q'}$, the set $(\mathit{S}')_{\Delta}^{\infty}$ is nonempty.
It follows that $\frac{1}{2 q'}$ is in $\{\Delta : (\mathit{S}')_{\Delta}^{\infty} \neq \emptyset \}$.

Finally, since $\{\Delta : (\mathit{S}')_{\Delta}^{\infty} \neq \emptyset \}$ is bounded by $\frac{1}{2 q'}$ and also contains $\frac{1}{2 q'}$, it holds that 
\begin{equation*}
    \mathcal{H}_{\mathit{S}}^{\infty} = \sup\{\Delta : (\mathit{S}')_{\Delta}^{\infty} \neq \emptyset \} = \frac{1}{2 q'}.
\end{equation*}
This completes the proof.
\end{proof}

Then, we prove Lemma \ref{transformed_sharp} in the following.


\begin{proof}
    The sharpness of the simplex can be written as
    \begin{equation}
        \mbox{Sharp}_{\mathit{S}'}^{\infty} ({\Delta}) = \underset{\mathbf{x} \in \mathit{S}'}{\max} \,\, \underbrace{\underset{\mathbf{y} \in (\mathit{S}')^{\infty}_{\Delta}}{\min}\,||\mathbf{x}-\mathbf{y}||_2}_{  h(x)}.
    \end{equation}
    Since $||\mathbf{x}-\mathbf{y}||_2$ is convex in $(x,y)$, it follows that $h(x)$ is convex.
    Then, since the maximum of a convex function over a polytope is attained at a vertex, it holds that
    \begin{equation}\label{max_dis}
        \mbox{Sharp}_{\mathit{S}'}^{\infty} ({\Delta}) = \max_{v \in V'} h(v),
    \end{equation}
    where $V'$ is the set of vertices of $S'$.
    Therefore, we will study the sharpness by studying $h(v)$ for all $v \in V'$.
    To do so, we first note that since $S'$ is a simplex, there are $M + 1$ vertices and each vertex, denoted by $v_m$ for $m \in \{0,1, ...,M\}$, can be written as $v_m = ((\mathbf{A}')^{v_m})^{-1} \mathbf{b}^{v_m}$, where $(\mathbf{A}')^{v_m}$ denotes $\mathbf{A}'$ when the row $m$ is removed and $\mathbf{b}^{v_m}$ denotes $\mathbf{b}$ (defined in \eqref{b}) with element $m$ removed.
    Also, since $(\mathit{S}')^{\infty}_{\Delta}$ is a simplex, we denote each vertex as $u_m = ((\mathbf{A}')^{u_m})^{-1} \tilde{\mathbf{b}}^{u_m}$, for $m \in \{0,1, \cdots,M\}$, where $(\mathbf{A}')^{u_m} = (\mathbf{A}')^{v_m}$ and $\tilde{\mathbf{b}}^{u_m}$ denotes the vector defined by $\tilde{b}_j = b_j - 
    \Delta||\mathbf{a}'_{j}||_1$ for all $j \in \{0,...,m-1,m+1,...,M\}$, where the $m$th element has been removed.
    It is worthwhile to note that these definitions imply that, for every $m \in \{0,1, ...,M\}$,
    \begin{equation}
        \label{mequate}
        \begin{split}
            (v_m - u_m) & = ((\mathbf{A}')^{v_m})^{-1} \mathbf{b}^{v_m} - ((\mathbf{A}')^{u_m})^{-1} \tilde{\mathbf{b}}^{u_m}\\
            & = ((\mathbf{A}')^{v_m})^{-1}( \mathbf{b}^{v_m} - \tilde{\mathbf{b}}^{u_m})\\
            & = \Delta ((\mathbf{A}')^{v_m})^{-1} \alpha^{v_m},
        \end{split}
    \end{equation}
    where $\alpha = [||\mathbf{a}'_{0}||_1\ \cdots \ ||\mathbf{a}'_{M}||_1]$ and $\alpha^{v_m}$ denotes the vector $\alpha$ with the element $m$ removed. 

    In order to study $h(v)$, we characterize the points in $(\mathit{S}')^{\infty}_{\Delta}$ that attain the minimum in the definition of $h(v)$ (i.e., the $y \in (\mathit{S}')^{\infty}_{\Delta}$ such that $h(v) = ||v-y||_2$).
    Specifically, we will show that for the vertex $v_m$, the point in  $(\mathit{S}')^{\infty}_{\Delta}$ that attains the minimum is $u_m$.
    Precisely, we need to show that $v_m - u_m$ is in the normal cone of $u_m$ for all $m \in \{ 0,...,M\}$.
    In particular, using standard results from convex analysis (i.e. Theorem 6.46 in \cite{rockafellar2009variational}), we need that there exists $z \in \mathbb{R}^M_+$ such that
    \begin{equation}
    \label{norm_cone}
        (v_m - u_m) = ((\mathbf{A}')^{v_m})^T z.
    \end{equation}
    Using \eqref{mequate}, we find such a $z$ by showing that
    \begin{equation}\label{z}
         z = \Delta (((\mathbf{A}')^{v_m})^T)^{-1}((\mathbf{A}')^{v_m})^{-1} \alpha^{v_m} \in \mathbb{R}_+.
    \end{equation}




In particular, we show that this holds for vertex $v_m$ for $m = 0$ and $m \neq 0$ in the following.
\begin{itemize}
    \item  For $m = 0$, 
\begin{equation*}
    \alpha^{v_0} = [\beta_{T,1},\beta_{T,2},\cdots,\beta_{T,M}]^T,
\end{equation*}
and for the diagonal matrix $(\mathbf{A}')^{v_0}$,
\begin{equation}
\label{matt1}
\begin{split}
     (((\mathbf{A}')^{v_0})^T)^{-1} & =  ((\mathbf{A}')^{v_0})^{-1} \\ & = \begin{bmatrix}
   -\frac{1}{\beta_{T,1}} & 0 & \cdots & 0 \\ 0 & -\frac{1}{\beta_{T,2}} &  & \vdots \\ \vdots  &  & \ddots & 0  \\ 0 & \cdots & 0 & -\frac{1}{\beta_{T,M}}
\end{bmatrix}.
\end{split}
\end{equation}
 Then it follows from \eqref{z} that
 \begin{equation*}
z = 
\begin{bmatrix}
   \frac{\Delta}{\beta_{T,1}} \\  \frac{\Delta}{\beta_{T,2}} \\ \vdots \\\frac{\Delta}{\beta_{T,M}}   
\end{bmatrix} \in \mathbb{R}^M_+.
\end{equation*}

    \item For $ m\neq 0 $, 

\begin{equation*}
    \alpha^{v_m} = [q', \beta_{T,1},\cdots,\beta_{T,m-1},\beta_{T,m+1},\cdots,\beta_{T,M}]^T,
\end{equation*}
and

 \begin{equation}\label{inverse}
 \begin{split}
     & ((\mathbf{A}')^{\mathit{v}_m})^{-1} = \\ &
\begin{bmatrix}
  0 & \frac{-1}{\beta_{T,1}} & 0 & & \cdots &  & 0 \\
  \vdots & & \ddots & & & & \vdots \\
  0 & \cdots & 0 & \frac{-1}{\beta_{T,m-1}} & 0 & \cdots & 0 \\
  \rho_m & \frac{\rho_m}{c_1} & \cdots &\frac{\rho_m}{c_{m-1}} & \frac{\rho_m}{c_{m+1}} & \cdots & \frac{\rho_m}{c_M} \\
  0 & & \cdots &  & \frac{-1}{\beta_{T,m+1}} & \cdots & 0 \\
  \vdots & & & & & \ddots & \vdots \\
  0 &  &  & \cdots  &  &  & \frac{-1}{\beta_{T,M}}
\end{bmatrix}.
 \end{split}
 \end{equation}
 
 It follows that
 \begin{equation*}
      ((\mathbf{A}')^{\mathit{v}_m})^{-1}\alpha^{v_m}  = \left[-1, \cdots , -1 , 2q'\rho_m-1, \cdots ,-1 \right]^T
 \end{equation*}
and therefore,
\begin{equation*}
z = 
\begin{bmatrix}
   \rho_m\gamma \\  \frac{1}{\beta_{T,1}}+\frac{\rho_m}{c_1}\gamma \\ \vdots \\ \frac{1}{\beta_{T,m-1}}+\frac{\rho_m}{c_{m-1}}\gamma \\ \frac{1}{\beta_{T,m+1}}+\frac{\rho_m}{c_{m+1}}\gamma \\ \vdots \\ \frac{1}{\beta_{T,M}}+\frac{\rho_m}{c_M}\gamma  
\end{bmatrix} \in \mathbb{R}^M_+,
\end{equation*}
where $\gamma = 2q'\rho_m-1 > 0 $.


\end{itemize}

Thus far, we have shown that the minimum distance from each vertex to the shrunk version is attained at the corresponding vertex of the shrunk version and therefore
\begin{equation}\label{max_dis2}
    \begin{split}
        \mbox{Sharp}_{\mathit{S}'}^{\infty} ({\Delta}) & = \max_{m \in \{0,1,...,M\}} \| u_m - v_m \|_2,\\
        & = \max_{m \in \{0,1,...,M\}} \| \Delta ((\mathbf{A}')^{v_m})^{-1} \alpha^{v_m} \|_2\\
        & \begin{aligned}= \max\Big\{ & \Delta \sqrt{M},\\
        & \max_{m \in [M]} \Delta \sqrt{(M-1)+ ( 2q'\rho_m - 1)^2} \Big\}\end{aligned}\\
        & = \Delta \sqrt{(M-1)+ ( 2q'\tilde{\rho} - 1)^2},
    \end{split}
\end{equation}
where the third equality uses the form of $((\mathbf{A}')^{v_m})^{-1}$ given in \eqref{matt1} and \eqref{inverse}, and the last equality uses the fact that $q'\tilde{\rho} > 1$ for all $m$, where $\tilde{\rho} = \max_{m \in [M]} \rho_m$.
\end{proof}

Now that we state the exact sharpness of $\mathit{S}'$, the following lemma suggests that among various simplices, the sharpness of a symmetric simplex for a given positive $\Delta$ is the smallest, making them potentially more favorable for minimizing the regret.

\begin{lemma}\label{min_sharp}
   For a simplex $\mathit{S}'$ and for an arbitrary $0 \leq \Delta \leq \mathcal{H}_{\mathit{S}'}^{\infty}$ with vector $\boldsymbol{\rho} = \left[ \rho_1 , \rho_2 , \cdots , \rho_M \right]$, it holds:  
    \begin{equation}\label{min_sharp_eq}
       \underset{\boldsymbol{\rho}}{\min} \, \mbox{Sharp}_{S'}^{\infty} (\Delta) = \Delta \sqrt{(M-1) + (2M - 1)^2},
    \end{equation}
and 
\begin{equation}
    \underset{\boldsymbol{\rho}}{\argmin} \, \mbox{Sharp}_{\mathit{S}'}^{\infty}({\Delta}) = \{ \boldsymbol{\rho} = p\mathds{1} : \forall p \in \mathbb{R}^{+}\}.
\end{equation}
\end{lemma}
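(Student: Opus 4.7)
The plan is to leverage the closed-form expression for $\mathrm{Sharp}_{\mathit{S}'}^{\infty}(\Delta)$ established in Lemma \ref{transformed_sharp} and turn the minimization over $\boldsymbol{\rho}$ into a simple scalar optimization. Since Lemma \ref{transformed_sharp} gives
\[
\mathrm{Sharp}_{\mathit{S}'}^{\infty}(\Delta) = \Delta\sqrt{(M-1) + (2q'\tilde{\rho} - 1)^2},
\]
with $q' = \sum_{m=1}^M 1/\rho_m$ and $\tilde{\rho} = \max_{m} \rho_m$, the whole dependence on $\boldsymbol{\rho}$ is through the scalar product $q'\tilde{\rho}$. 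My first step is to argue that minimizing the sharpness over $\boldsymbol{\rho} \in \mathbb{R}_+^M$ is equivalent to minimizing $q'\tilde{\rho}$, because $(2x-1)^2$ is monotonically increasing in $x$ on $x \ge 1/2$, and we will see that $q'\tilde{\rho} \ge M \ge 1$ in all cases of interest.

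Next I would prove the key bound
\[
q'\tilde{\rho} \;=\; \sum_{m=1}^{M} \frac{\tilde{\rho}}{\rho_m} \;\ge\; M,
\]
which follows immediately from $\tilde{\rho} \ge \rho_m$ for every $m \in [M]$, with equality if and only if $\rho_m = \tilde{\rho}$ for every $m$, i.e.\ $\boldsymbol{\rho} = p\mathds{1}$ for some $p \in \mathbb{R}^+$. Plugging this minimizer back into the sharpness expression gives the claimed value $\Delta\sqrt{(M-1) + (2M-1)^2}$, and the characterization of the argmin follows from the equality condition.

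Finally, I would verify that the candidate $\boldsymbol{\rho} = p\mathds{1}$ is indeed admissible, namely that the constraint $\Delta \le \mathcal{H}_{\mathit{S}'}^{\infty}$ remains compatible with this choice (since $\mathcal{H}_{\mathit{S}'}^{\infty} = 1/(2q')$ depends on $\boldsymbol{\rho}$ through $q'$, the range of allowable $\Delta$ changes with $p$, but the argmin characterization is scale-invariant in $p$, so any $p \in \mathbb{R}^+$ large enough keeps $\Delta$ within the feasible range). I do not expect any serious obstacle here: the proof is essentially a one-line inequality once Lemma \ref{transformed_sharp} is in hand. The only subtlety to be careful about is the monotonicity claim used to reduce the problem to minimizing $q'\tilde{\rho}$, which I would justify explicitly by noting that $(2q'\tilde{\rho}-1)^2 \ge (2M-1)^2$ whenever $q'\tilde{\rho} \ge M$, with equality attained precisely when $q'\tilde{\rho} = M$.
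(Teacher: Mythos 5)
Your proof is correct and takes essentially the same approach as the paper: your key inequality $q'\tilde{\rho}=\sum_{m=1}^{M}\tilde{\rho}/\rho_m \geq M$, with equality if and only if $\boldsymbol{\rho}=p\mathds{1}$, is exactly the paper's Lemma \ref{function} (applied to $\boldsymbol{\rho}$), and the monotonicity reduction via $(2x-1)^2$ matches the paper's chain of inequalities. Your explicit remark that the admissible range $\Delta \leq \mathcal{H}_{\mathit{S}'}^{\infty}$ itself depends on $\boldsymbol{\rho}$ is a point the paper leaves implicit, but it does not change the argument.
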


\subsubsection{Proof of Lemma \ref{min_sharp}}
\label{proof_min}

To prove Lemma \ref{min_sharp}, let us use the following lemma. 

\begin{lemma}\label{function}
    Consider the function $g:\mathbb{R}^M_{++} \rightarrow \mathbb{R}$ defined as
    \begin{equation*}
        g(\mathbf{b}) = \left(\max_{m \in [M]} b_m \right)\sum_{m=1}^{M}\frac{1}{b_m}.
    \end{equation*}
    Then, for any $\mathbf{b} \in \mathbb{R}^M_{++}$, it holds that $g(\mathbf{b}) \geq M$.
    Furthermore,  $g(\mathbf{b}) = M$ if and only if $\mathbf{b} = B\mathds{1}$ for some $B \in \mathbb{R}_{++}$.
\end{lemma}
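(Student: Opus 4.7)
The plan is to prove the lower bound by a simple termwise comparison: letting $B = \max_{m \in [M]} b_m$, we observe that $b_m \leq B$ implies $1/b_m \geq 1/B$ for every $m$, so
\[
\sum_{m=1}^{M} \frac{1}{b_m} \;\geq\; \sum_{m=1}^{M} \frac{1}{B} \;=\; \frac{M}{B}.
\]
Multiplying both sides by $B > 0$ immediately yields $g(\mathbf{b}) \geq M$, establishing the bound.

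For the equality characterization, I would note that the chain of inequalities above is tight if and only if each individual termwise inequality $1/b_m \geq 1/B$ is an equality, i.e., if and only if $b_m = B$ for every $m \in [M]$. This is precisely the condition $\mathbf{b} = B\mathds{1}$ with $B \in \mathbb{R}_{++}$ (positivity is inherited from $\mathbf{b} \in \mathbb{R}^M_{++}$). Conversely, substituting $\mathbf{b} = B\mathds{1}$ into the definition of $g$ gives $g(B\mathds{1}) = B \cdot (M/B) = M$, which verifies the ``if'' direction.

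There is no real obstacle here: the lemma is a one-line consequence of the monotonicity of $x \mapsto 1/x$ on $\mathbb{R}_{++}$. The only subtlety worth flagging is that the $\max$ need not be attained at a unique index, but the argument is insensitive to this, since we only use $b_m \leq B$ for all $m$, and the equality condition forces $b_m = B$ uniformly regardless of where the maximum is attained.
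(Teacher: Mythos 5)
Your proof is correct and follows essentially the same approach as the paper: both establish the lower bound by the termwise comparison $1/b_m \geq 1/\max_{m'} b_{m'}$, and both characterize equality by observing that strictness in any single term (equivalently, $b_{m'} < \max_m b_m$ for some $m'$) forces $g(\mathbf{b}) > M$; the paper merely phrases the ``only if'' direction as a contrapositive while you phrase it as tightness of the termwise inequalities.
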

\begin{proof}
    First, we show that $g(\mathbf{b}) \geq M$.
    Since $b_m > 0 $  for all $m \in [M]$, it holds that:
    \begin{equation}
    \begin{split}
        g(\mathbf{b}) & = \left(\max_{m \in [M]} b_m \right)\sum_{m=1}^{M}\frac{1}{b_m}\\
        & \geq \left(\max_{m \in [M]} b_m \right)\sum_{m=1}^{M}\frac{1}{\max_{m \in [M]} b_m}\\
        & = M.
    \end{split}
    \end{equation}
    Then, we show that
    \begin{equation*}
        g(\mathbf{b}) = M \quad \Longleftrightarrow \quad \exists B \in \mathbb{R}_{++} \text{ s.t. } \mathbf{b} = B\mathds{1}.
    \end{equation*}
\begin{itemize}
    \item ($\Longleftarrow$) If there exists $B \in \mathbb{R}_{++}$ such that  $\mathbf{b} = B\mathds{1}$, then
\begin{equation*}
    g(\mathbf{b}) = B\sum_{m=1}^{M}\frac{1}{B} = \sum_{m=1}^{M}\frac{B}{B}= M.
\end{equation*}
\item ($\Longrightarrow$) We prove the contrapositive. 
If there does not exist $B \in \mathbb{R}_{++}$ such that $\mathbf{b} = B\mathds{1}$, then $b_{m'} < \max_{m \in [M]} b_m$ for at least one $m' \in [M]$.
It follows that,
\begin{equation}
\begin{split}
    g(\mathbf{b}) & = \left(\max_{m \in [M]} b_m \right)\sum_{m=1}^{M}\frac{1}{b_m}\\
    & > \left(\max_{m \in [M]} b_m \right)\sum_{m=1}^{M}\frac{1}{\max_{m \in [M]} b_m}\\
    & = M,
\end{split}
\end{equation}
and therefore, $g(\mathbf{b}) \neq M$.
\end{itemize}
\end{proof}

\subsubsection{Completing the proof of Lemma \ref{min_sharp}}

\begin{proof}
     Given sharpness of the transformed safe set $\mathit{S}'$ from Lemma \ref{transformed_sharp} (Eq. \eqref{eq_sharp}), we can rewrite it as follows: 
     \begin{equation}
     \begin{split}
         \mbox{Sharp}_{\mathit{S}'}^{\infty} ({\Delta}) & = \Delta \sqrt{(M-1)+ ( 2q'\tilde{\rho} - 1)^2} \\ & =  \Delta \sqrt{(M-1)+ ( 2f(\boldsymbol{\rho}) - 1)^2}, 
         \end{split}
     \end{equation}
 where $ f(\boldsymbol{\rho})=\left(\max_{m \in [M]} \rho_m \right)\sum_{m=1}^{M}\frac{1}{\rho_m}.$ 
 Then the following inequalities hold for a constant M and positive $\Delta$, using Lemma \ref{function}: 
\begin{equation}\label{min of sharp}
    \begin{split}
    f(\boldsymbol{\rho}) & \geq M \\
        \Rightarrow \sqrt{(M-1)+(2f(\boldsymbol{\rho})-1)^2} & \geq \sqrt{(M-1)+(2M-1)^2} \\ &= \sqrt{4M^2 - 3M}\\
        \Rightarrow \Delta\sqrt{(M-1)+(2f(\boldsymbol{\rho})-1)^2} & \geq \Delta\sqrt{4M^2 - 3M},
\end{split}
\end{equation}
and $\mbox{Sharp}_{\mathit{S}'}^{\infty} ({\Delta}) = \Delta\sqrt{4M^2 - 3M} $ when $\boldsymbol{\rho} = p\mathds{1}$ for all $p \in \mathbb{R}^{+}$.
Then Lemma \ref{min_sharp} is proved. 
\end{proof}

\subsubsection{Regret Bound, a function of privacy levels}\label{bound_lim}

Since $\beta_{t,m}$ is a function of $\alpha_m$ \eqref{beta},
we define vector $\mathit{a}=[\alpha_1,\alpha_2,...,\alpha_M]$, called privacy vector which is a vector of all $M$ agents' privacy level parameters (defined in \eqref{privacy_level}). Then, using Theorem \ref{main_2} and Lemma \ref{transformed_sharp}, the 
regret upper bound for Safe-Private Lin-UCB with a simplex safe set $\mathcal{S}$ can be written as a function of privacy vector $\mathit{a}$ . 

\begin{equation}\label{R2-Simp}
    \begin{split}
        R_T & \leq r(T, \mathit{a}) \\ & := 2LKS\sqrt{M}T'\\ &+ L(T-T')\frac{2\sqrt{2}K\tilde{\beta}_T}{\sqrt{2\nu + \check{\lambda}T'}}\sqrt{(M-1)+(2q'\tilde{\rho}-1)^2}\\
         & +L\max(\frac{1}{2q'},2) \\ & \times \sqrt{2d\log(1 + \frac{TK^2}{d\nu})(T-T')\sum_{m \in [M]}\beta_{T,m}^2},
     \end{split}
\end{equation} 
where $T' = \max(t'_h , t_{\delta'}, \frac{2}{\check{\lambda}}(\tilde{\beta}_TT)^{\frac{2}{3}})$ and $\tilde{\beta}_T = \underset{m \in [M]}{\max\beta_{T,m} }$.

Now we can state Lemma \ref{limit_lemma}.

\begin{lemma}\label{limit_lemma}
    Let given  $r(\mathit{a}) = \underset{T \rightarrow \infty}{\lim}\frac{r(T, \mathit{a})}{\left(T^{2}(\log T)\right)^{\frac{1}{3}}}$. Then for $\tilde{\alpha}= \underset{m \in [M]}{\max} \alpha_m $, it holds: 

\begin{equation}\label{limit_value_app}
\begin{split}
 r(\mathit{a}) 
        & =  2LK  \left(d(R^2 +{\tilde{\alpha}}^2\sigma^2)\right)^{\frac{1}{3}} \\
    &   \times    \left(\frac{2S\sqrt{M}}{\check{\lambda}} + \sqrt{(M-1)+ ( 2f(\mathit{a}) - 1)^2} \right),  \\
    \end{split}
\end{equation}
where the function $f(\mathit{a})$ is given by:
\begin{equation}\label{fa_app}
    f(\mathit{a}) = \max_{m \in [M]} \sum_{m'=1}^{M} \frac{c_{m}\sqrt{(R^2 +\alpha_{m'}^2\sigma^2)}}{c_{m'}\sqrt{(R^2 +\alpha_{ m}^2\sigma^2)}}.
\end{equation}
\end{lemma}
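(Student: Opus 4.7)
\textit{Proof proposal.}  The plan is to split $r(T,\mathit{a})$ into its three summands from \eqref{R2-Simp}, normalize each by $(T^2\log T)^{1/3}$, and take $T\to\infty$ term by term. The key algebraic observation driving the entire argument is that the exploration length $T'$ is chosen so that $\check{\lambda}T'/\tilde{\beta}_T^{2}$ scales like $T^{2/3}(d\log T)^{1/3}(R^2+\tilde\alpha^2\sigma^2)^{1/3}$, which is precisely the scaling needed to balance the pure-exploration cost (Term I below) against the exploration--exploitation safety cost, while making the confidence-width term lower order.

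Before tackling the three summands, I would record the asymptotics of the elementary building blocks. From \eqref{beta}, for each agent $m$,
\[
\beta_{T,m} \;=\; \sqrt{R^2+\alpha_m^2\sigma^2}\,\sqrt{d\log\!\bigl(\tfrac{1+TK^2/\nu}{\delta'/M}\bigr)}+S\sqrt{\nu}\;\sim\;\sqrt{(R^2+\alpha_m^2\sigma^2)\,d\log T},
\]
so $\tilde{\beta}_T\sim\sqrt{(R^2+\tilde{\alpha}^2\sigma^2)d\log T}$. In particular, $T' \sim \tfrac{2}{\check\lambda}(\tilde{\beta}_T T)^{2/3}$ dominates the $\max$ defining $T'$ for large $T$, giving $T'\sim \tfrac{2}{\check\lambda}\bigl((R^2+\tilde\alpha^2\sigma^2)d\log T\bigr)^{1/3}T^{2/3}$. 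Crucially, the common $\sqrt{d\log T}$ factor cancels in ratios of $\beta_{T,m}$'s, which together with $\rho_m=c_m/\beta_{T,m}$ yields
\[
q'\tilde\rho=\Bigl(\sum_{m'}\tfrac{\beta_{T,m'}}{c_{m'}}\Bigr)\,\max_{m}\tfrac{c_m}{\beta_{T,m}}
\;\longrightarrow\;\max_{m}\sum_{m'}\tfrac{c_m\sqrt{R^2+\alpha_{m'}^2\sigma^2}}{c_{m'}\sqrt{R^2+\alpha_m^2\sigma^2}}=f(\mathit{a}).
\]

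With these in hand, I would treat each term separately. For Term I, $2LKS\sqrt{M}T'$ divided by $(T^2\log T)^{1/3}$ tends to $\tfrac{4LKS\sqrt{M}}{\check\lambda}\bigl((R^2+\tilde\alpha^2\sigma^2)d\bigr)^{1/3}$. For Term II, I would use $T-T'\sim T$, substitute $\tilde{\beta}_T/\sqrt{2\nu+\check\lambda T'}\sim \tilde{\beta}_T/\sqrt{\check\lambda T'}$ and check that the $\sqrt{d\log T}$ factors combine with $T/T'{}^{1/2}$ to yield exactly $T^{2/3}(d\log T)^{1/3}(R^2+\tilde\alpha^2\sigma^2)^{1/3}$ up to constants; combining with the $q'\tilde\rho\to f(\mathit{a})$ limit produces $2LK((R^2+\tilde\alpha^2\sigma^2)d)^{1/3}\sqrt{(M-1)+(2f(\mathit{a})-1)^2}$. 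For Term III, note that $\max(1/(2q'),2)\to 2$ (since $q'\to\infty$), $\sum_m\beta_{T,m}^2=\Theta(\log T)$ and $(T-T')\log T=\Theta(T\log T)$, so the whole term is $\Theta\bigl(T^{1/2}\log T\bigr)$, which is $o\bigl(T^{2/3}(\log T)^{1/3}\bigr)$ and vanishes after normalization.

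Adding the surviving Terms I and II yields the claimed expression for $r(\mathit{a})$. The main obstacle I foresee is purely bookkeeping: one must track fractional powers of $T$, $\log T$, $d$, and $(R^2+\alpha_m^2\sigma^2)$ simultaneously, and in particular verify that the $1/6$-powers of $\log T$ that appear inside the square root of Term II combine with the $1/2$-power in the numerator to give the $1/3$-power of $\log T$ that cancels against the normalizer. Beyond this, the argument is a sequence of limits of elementary expressions, justified by continuity of the square root and the max, together with the fact that in the limit every occurrence of $\beta_{T,m}$ in $q'\tilde{\rho}$ contributes only through its dependence on $\alpha_m$ via $\sqrt{R^2+\alpha_m^2\sigma^2}$.
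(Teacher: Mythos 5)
Your proposal is correct and follows essentially the same route as the paper's own proof: normalize the three summands of \eqref{R2-Simp} by $(T^2\log T)^{1/3}$, use $T'\sim\tfrac{2}{\check{\lambda}}(\tilde{\beta}_T T)^{2/3}$ and $\tilde{\beta}_T\sim\sqrt{d(R^2+\tilde{\alpha}^2\sigma^2)\log T}$ to evaluate the limits of Terms I and II, and obtain $f(\mathit{a})$ as the limit of $q'\tilde{\rho}$ via cancellation of the common $\sqrt{d\log T}$ factors in the ratios $\beta_{T,m'}/\beta_{T,m}$. You are in fact slightly more explicit than the paper in one place---you verify that the third (confidence-width) term is $\Theta(T^{1/2}\log T)=o\bigl(T^{2/3}(\log T)^{1/3}\bigr)$, whereas the paper drops it without comment---though note a small slip in your motivational sentence: the quantity scaling like $T^{2/3}(d\log T)^{1/3}(R^2+\tilde{\alpha}^2\sigma^2)^{1/3}$ is $\check{\lambda}T'$ itself, not $\check{\lambda}T'/\tilde{\beta}_T^{2}$ (the latter carries $(\log T)^{-2/3}$), a discrepancy that does not affect your subsequent term-by-term computations.
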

\subsubsection{Proof of Lemma \ref{limit_lemma}}
\label{r_apn}

\begin{proof}
 
Given Equation \eqref{R2-Simp}, 
\begin{equation}
    \begin{split}
        r(T, \mathit{a}) & = 2LKS\sqrt{M}T'\\ &+ L(T-T')\frac{2\sqrt{2}K\tilde{\beta}_T}{\sqrt{2\nu + \check{\lambda}T'}}\sqrt{(M-1)+(2q'\tilde{\rho}-1)^2}\\
         & +L\max(\frac{1}{2q'},2) \\ & \times \sqrt{2d\log(1 + \frac{TK^2}{d\nu})(T-T')\Bigl(\!\!\sum_{m \in [M]}\!\!\!\beta_{T,m}^2\Bigl)},
     \end{split}
\end{equation} 
where $T' = \max(t'_h , t_{\delta'}, \frac{2}{\check{\lambda}}(\tilde{\beta}_TT)^{\frac{2}{3}})$ and $\tilde{\beta}_T = \underset{m \in [M]}{\max\beta_{T,m} }$.
Also, note that
 \begin{equation}\label{alpha_hat}
\begin{split}
      \tilde{\beta}_T & = \underset{m \in [M]}{\max\beta_{T,m}} \\ & =  \underset{m \in [M]}{\max }\sqrt{d(R^2 +\alpha_m^2\sigma^2)\log{(\frac{1+ TK^2/\nu}{\delta'/M})}} + S\sqrt{\nu} \\ 
      & = \sqrt{d(R^2 +  (\underset{m \in [M]}{\max}\alpha_m)^2\sigma^2)\log{(\frac{1+ TK^2/\nu}{\delta'/M})}} + S\sqrt{\nu} \\ 
    & = \sqrt{d(R^2 +  \tilde{\alpha}^2 \sigma^2)\log{(\frac{1+ TK^2/\nu}{\delta'/M})}} + S\sqrt{\nu}. \\
\end{split}
\end{equation}

Therefore, it holds that:
\begin{equation}\label{lim}
    \begin{split}
        r(\mathit{a}) & =  \underset{T \rightarrow \infty}{\lim}\frac{r(T, \mathit{a})}{\left(T^{2}(\log T)\right)^{\frac{1}{3}}}\\
        & = \underset{T \rightarrow \infty}{\lim}  \frac{1}{T^{2/3}(\log(T))^{\frac{1}{3}}}\Biggl(2LKS\sqrt{M}T'\\ & + L(T-T')\frac{2\sqrt{2}K\tilde{\beta}_T}{\sqrt{2\nu + \check{\lambda}T'}}\sqrt{(M-1)+(2q'\tilde{\rho}-1)^2}\\
        & + L\max(\frac{1}{2q'}, 2)\\ & \times\sqrt{2d\log(1 + \frac{TK^2}{d\nu})(T-T')\Bigl(\!\!\sum_{m \in [M]}\!\!\!\beta_{T,m}^2\Bigl)} \Biggl)\\ = & \underset{T \rightarrow \infty}{\lim}  \frac{2LKS\sqrt{M}(\frac{2}{\check{\lambda}}(\tilde{\beta}_TT)^{\frac{2}{3}})}{T^{2/3}(\log(T))^{\frac{1}{3}}} + \\ &\underset{T \rightarrow \infty}{\lim}  \frac{1}{T^{2/3}(\log(T))^{\frac{1}{3}}} \\& \times (\frac{2\sqrt{2}LKT\tilde{\beta}_T\sqrt{(M-1)+(2q'\tilde{\rho}-1)^2} }{\sqrt{2\nu + \check{\lambda}(\frac{2}{\check{\lambda}}(\tilde{\beta}_TT)^{\frac{2}{3}})}})
        \\ = & \frac{4LKS\sqrt{M}}{\check{\lambda}} \\
         & \times\!\!\Biggl(\!\underset{T \rightarrow \infty}{\lim}\!\!\Bigl(\!\!\sqrt{\frac{d(R^2 +\tilde{\alpha}^2\sigma^2)\log{(\frac{1+TK^2/\nu}{\delta'/M})}}{\log(T)}}\!+\!\frac{S\sqrt{\nu}}{\sqrt{\log(T)}}\Bigl)^{\!\frac{2}{3}}\!\!\Biggl) \\ & +\underset{T \rightarrow \infty}{\lim} \frac{2LK}{T^{2/3}log(T)^{\frac{1}{3}}}(\frac{T\tilde{\beta}_T\sqrt{(M-1)+(2q'\tilde{\rho}-1)^2} }{\sqrt{\nu+(T\tilde{\beta}_T)^{\frac{2}{3}}}})
        \\ = & \frac{4LKS\sqrt{M}}{\check{\lambda}}d^{\frac{1}{3}}(R^2+\tilde{\alpha}^2\sigma^2)^{\frac{1}{3}}+ 2LKd^{\frac{1}{3}}(R^2+\tilde{\alpha}^2\sigma^2)^{\frac{1}{3}} \\
        & \times \underset{T \rightarrow \infty}{\lim} \frac{(T\tilde{\beta}_T)^{\frac{1}{3}}(\sqrt{(M-1)+(2q'\tilde{\rho}-1)^2})}{\sqrt{\nu+(T\tilde{\beta}_T)^{\frac{2}{3}}}}
        \\ = & \frac{4LKS\sqrt{M}}{\check{\lambda}}d^{\frac{1}{3}}(R^2+\tilde{\alpha}^2\sigma^2)^{\frac{1}{3}} + \\
        & 2LKd^{\frac{1}{3}}(R^2+\tilde{\alpha}^2\sigma^2)^{\frac{1}{3}} \underset{T \rightarrow \infty}{\lim} (\sqrt{(M-1)+(2q'\tilde{\rho}-1)^2}).
    \end{split}    
\end{equation}
Given  $\tilde{\rho} = \underset{m \in [M]}{\max} \rho_m =  \underset{m \in [M]}{\max}\frac{c_m}{\beta_{T,m}}$, then it holds:

\begin{equation}
\label{lim_min}
\begin{split}
    & \underset{T \rightarrow \infty}{\lim} \sqrt{(M-1)+\left(2q'\tilde{\rho}-1\right)^2} \\
    = & \underset{T \rightarrow \infty}{\lim} \sqrt{(M-1)+\left(2\tilde{\rho}\sum_{m=1}^{M} \frac{1}{\rho_m}-1\right)^2} \\
    = & \underset{T \rightarrow \infty}{\lim} \sqrt{(M-1)+\left(2\tilde{\rho}\sum_{m=1}^{M} \frac{\beta_{T,m}}{c_m}-1\right)^2} \\
    = & \underset{T \rightarrow \infty}{\lim} \sqrt{(M-1)+\left(2\big(\underset{m \in [M]}{\max}\frac{c_m}{\beta_{T,m}}\big)\sum_{m=1}^{M} \frac{\beta_{T,m}}{c_m}-1\right)^2} \\
    = & \underset{T \rightarrow \infty}{\lim} \sqrt{(M-1)+\left(2\big(\underset{m \in [M]}{\max}\sum_{m'=1}^{M} \frac{c_m\beta_{T,m'}}{c_m'\beta_{T,m}}\big)-1\right)^2} \\
    = & \lim_{T \rightarrow \infty} \sqrt{(M-1)+\left(2\big(\underset{m \in [M]}{\max} \lim_{T \rightarrow \infty}\sum_{m'=1}^{M} \frac{c_m\beta_{T,m'}}{c_m'\beta_{T,m}}\big)-1\right)^2} \\
    = & \Biggl((M-1)+\biggl(2 \underset{m \in [M]}{\max} \underset{T \rightarrow \infty}{\lim} \\  \times  & \sum_{m'=1}^{M} \frac{c_{m}(\sqrt{d(R^2 +\alpha_{m'}^2\sigma^2)\log{( \frac{1+ TK^2/\nu}{\delta'/M})}} + S\sqrt{\nu})}{c_{m'}(\sqrt{d(R^2 +\alpha_{ m}^2\sigma^2)\log{( \frac{1+ TK^2/\nu}{\delta'/M})}} + S\sqrt{\nu})}-1\biggl)^2\Biggl)^{\frac{1}{2}}\\
    = & \sqrt{(M-1)+\left(2 \max_{m \in [M]} \sum_{m'=1}^{M} \frac{c_{m}\sqrt{(R^2 +\alpha_{m'}^2\sigma^2)}}{c_{m'}\sqrt{(R^2 +\alpha_{ m}^2\sigma^2)}}-1\right)^2}. 
\end{split}
\end{equation}
Therefore, for $f(\mathit{a}) = \max_{m \in [M]} \sum_{m'=1}^{M} \frac{c_{m}\sqrt{(R^2 +\alpha_{m'}^2\sigma^2)}}{c_{m'}\sqrt{(R^2 +\alpha_{ m}^2\sigma^2)}}$,
\begin{equation}\label{budget_r}
\begin{split}
    r(\mathit{a})  =& \frac{4LKS\sqrt{M}}{\check{\lambda}}d^{\frac{1}{3}}(R^2+\tilde{\alpha}^2\sigma^2)^{\frac{1}{3}} \\
        & + 2LKd^{\frac{1}{3}}(R^2+\tilde{\alpha}^2\sigma^2)^{\frac{1}{3}}\sqrt{(M-1)+(2f(\mathit{a})-1)^2}\\
        = & 2LK(d(R^2 +{\tilde{\alpha}}^2\sigma^2))^{\frac{1}{3}} \times \\
     & \left(\frac{2S\sqrt{M}}{\check{\lambda}} + \sqrt{(M-1)+ ( 2f(\mathit{a}) - 1)^2}\right).
\end{split}
\end{equation}


\end{proof}

\subsubsection{Completing the proof}
\label{compl_proof}

\begin{theorem}[Duplicate of Theorem \ref{main_3}]
    Consider the privacy vector $\mathit{a}^* \in \mathbb{R}^M$, for which the $m$th element is defined as
    \begin{equation*}
        \alpha^*_m = \sqrt{\left(\frac{R^2}{\sigma^2}  +\tilde{r}^2\right)\frac{c_m^2}{\Tilde{c}^2} - \frac{R^2}{\sigma^2}},
    \end{equation*}
    where $\tilde{c} = \max_{m \in [M]} c_m$ and
    \begin{equation*}
        \tilde{r} =\frac{1}{\sigma}\sqrt{\frac{U^3}{8 L^3 K^3 d M^{\frac{3}{2}} \left(\frac{2 S}{\hat{\lambda}} + \sqrt{4M - 3}\right)^3} - R^2}.
    \end{equation*}
    It holds that $\mathit{a}^* \in \mathcal{A}_*$.
\end{theorem}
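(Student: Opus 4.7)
The plan is to check the two defining requirements of $\mathcal{A}_*$: (i) feasibility, $r(\mathit{a}^*)\le U$, and (ii) unilateral unimprovability, i.e.\ $r(\mathit{a})>U$ whenever $\mathit{a}\ge \mathit{a}^*$ componentwise and $\mathit{a}\neq \mathit{a}^*$. Throughout I will work with the closed-form expression
\begin{equation*}
r(\mathit{a}) = 2LK\bigl(d(R^2+\tilde{\alpha}^2\sigma^2)\bigr)^{1/3}\Bigl(\tfrac{2S\sqrt{M}}{\check{\lambda}}+\sqrt{(M-1)+(2f(\mathit{a})-1)^2}\Bigr)
\end{equation*}
from Lemma \ref{limit_lemma}, with $\tilde{\alpha}=\max_m\alpha_m$ and $f(\mathit{a})=\max_m\sum_{m'}\tfrac{c_m\sqrt{R^2+\alpha_{m'}^2\sigma^2}}{c_{m'}\sqrt{R^2+\alpha_m^2\sigma^2}}$.

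For (i) I first rewrite the definition of $\alpha_m^*$ as $\sqrt{R^2+(\alpha_m^*)^2\sigma^2}=(c_m/\tilde{c})\sqrt{R^2+\tilde{r}^2\sigma^2}$. Substituting this ratio into $f$ collapses every summand to $1$, giving $f(\mathit{a}^*)=M$, and shows that $\max_m\alpha_m^*$ is attained precisely at any index with $c_m=\tilde{c}$, with value $\tilde{r}$. Using $\sqrt{(M-1)+(2M-1)^2}=\sqrt{M(4M-3)}$ and the explicit definition of $\tilde{r}$ then reduces $r(\mathit{a}^*)$ to $U$ by direct algebra, so in fact $r(\mathit{a}^*)=U$ and $\mathit{a}^* \in \mathcal{A}$.

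For (ii) I write $r(\mathit{a})=2LK\,\phi(\tilde{\alpha})\,\psi(f(\mathit{a}))$, where $\phi(t)=(d(R^2+t^2\sigma^2))^{1/3}$ is strictly increasing in $t\ge 0$ and $\psi(s)=\tfrac{2S\sqrt{M}}{\check{\lambda}}+\sqrt{(M-1)+(2s-1)^2}$ is strictly increasing in $s\ge \tfrac{1}{2}$. Setting $b_m:=\sqrt{R^2+\alpha_m^2\sigma^2}/c_m$, one has $f(\mathit{a})=\bigl(\sum_m b_m\bigr)/\min_m b_m\ge M$ by Lemma \ref{function}, with equality iff all $b_m$ are equal; at $\mathit{a}^*$ they are all equal to $B^*:=\sqrt{R^2+\tilde{r}^2\sigma^2}/\tilde{c}$, and the constraint $\mathit{a}\ge \mathit{a}^*$ forces $b_m\ge B^*$ for every $m$. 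For $\mathit{a}\ge \mathit{a}^*$ with $\mathit{a}\neq \mathit{a}^*$ I then split into two cases: if $\tilde{\alpha}(\mathit{a})>\tilde{r}$, the factor $\phi$ strictly increases and $\psi(f(\mathit{a}))\ge \psi(M)$, so $r(\mathit{a})>U$; otherwise $\tilde{\alpha}(\mathit{a})=\tilde{r}$, which forces $\alpha_{\tilde m}=\tilde r=\alpha_{\tilde m}^*$ at every index $\tilde m$ achieving $c_{\tilde m}=\tilde c$, pinning $b_{\tilde m}=B^*$ and hence $\min_m b_m=B^*$; but then $\mathit{a}\neq\mathit{a}^*$ forces some $m^*$ with $c_{m^*}<\tilde c$ and $\alpha_{m^*}>\alpha_{m^*}^*$, so $b_{m^*}>B^*$, $\sum_m b_m>MB^*$, and $f(\mathit{a})>M$, giving a strict increase in $\psi$ and again $r(\mathit{a})>U$.

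The main obstacle is the second case of (ii): increasing $\alpha_{m^*}$ at an index with $c_{m^*}<\tilde c$ does not raise $\tilde{\alpha}$, so the strict increase of $r$ must come entirely from $f$. The key observation that unlocks this is that $\mathit{a}\ge \mathit{a}^*$ pins $b_{\tilde m}=B^*$ at the indices with $c_{\tilde m}=\tilde c$, which in turn locks $\min_m b_m=B^*$ while any strict increase of a slack coordinate pushes $\sum_m b_m$ strictly above $MB^*$, landing exactly in the strict-inequality regime of Lemma \ref{function}.
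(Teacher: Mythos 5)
Your proof is correct and takes essentially the same route as the paper's: both verify $r(\mathit{a}^*)=U$ by direct substitution (showing $f(\mathit{a}^*)=M$ and $\tilde{\alpha}^*=\tilde{r}$), and both obtain strict growth of $r$ under any componentwise increase from its factorization into increasing functions of $\tilde{\alpha}$ and $f(\mathit{a})$, with Lemma \ref{function} supplying $f\geq M$ together with its equality characterization. The only difference is organizational: the paper splits cases on whether the perturbed ratios $c_m/\sqrt{R^2+\alpha_m^2\sigma^2}$ remain all equal (getting strictness from $\tilde{\alpha}$ via inverse-function monotonicity in one case and from the strict part of Lemma \ref{function} in the other), whereas you split on $\tilde{\alpha}>\tilde{r}$ versus $\tilde{\alpha}=\tilde{r}$ and use the pinning of the coordinates with $c_m=\tilde{c}$ to force $f(\mathit{a})>M$ in the second case --- the same two sources of strictness, arranged differently.
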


\begin{proof}
In order to show that $\mathit{a}^* \in \mathcal{A}_*$, we need that both
\begin{enumerate}
    \item $r(\mathit{a}^*) = U$,
    \item $r(\mathit{a}^* + v) > U$ for any $v \in \mathbb{R}_+ \setminus \{ \mathbf{0} \}$.
\end{enumerate}
First, we show 1).
To do so, note that
\begin{align*}
    f(\mathit{a}^*) &= \max_{m \in [M]} \sum_{m'=1}^{M} \frac{c_{m}\sqrt{R^2 +(\alpha_{m'}^*)^2\sigma^2}}{c_{m'}\sqrt{R^2 +(\alpha_{ m}^*)^2\sigma^2}}\\
    & = \sum_{m'=1}^{M} \frac{\tilde{c}\sqrt{R^2 +\tilde{r}^2\sigma^2}}{\tilde{c}\sqrt{R^2 +\tilde{r}^2\sigma^2}} = M.
\end{align*}
Also, it holds that
\begin{align*}
    \tilde{\alpha}^* & = \max_{m \in [M]} \alpha_m^*\\
    & = \max_{m \in [M]} \sqrt{(\frac{R^2}{\sigma^2}  +\tilde{r}^2)(\frac{c_m}{\tilde c})^2 - \frac{R^2}{\sigma^2}}\\
    & = \sqrt{(\frac{R^2}{\sigma^2}  +\tilde{r}^2)(\max_{m \in [M]} \frac{c_m}{\tilde c})^2 - \frac{R^2}{\sigma^2}} = \tilde{r}.
\end{align*}
Therefore,
\begin{align*}
    r(a^*) & = 2LK(d(R^2 +{\tilde{r}}^2\sigma^2))^{\frac{1}{3}} 
     \left(\frac{2S\sqrt{M}}{\check{\lambda}} + \sqrt{4M^2-3M}\right)\\
     & = U.
\end{align*}
Then, we show 2), that $r(a) > U$ where $a = \mathit{a}^* + v$ for any $v \in \mathbb{R}_+ \setminus \{ \mathbf{0} \}$.
First, let $g(h(a)) = f (a)$, where $h : \mathbb{R}^m_{++} \rightarrow \mathbb{R}^m_{++}$,
\begin{equation}
\label{h_func}
    h_m ( \alpha_m) :=  \frac{c_m}{\sqrt{R^2 + \alpha_m^2 \sigma^2}},
\end{equation}
and note that there exists positive real $B'$ such that $h_m ( \alpha_m^*) = B'$ for all $m \in [M]$.
Then, to show that $r(a) > U$, we consider two cases:
\begin{enumerate}
    \item If there exists a positive real $B$ such that $h_m(\alpha_m) = B$ for all $m \in [M]$, then it holds that $B = h_m(\alpha_m) < h_m ( \alpha_m^*) = B'$ since there exists at least one $\bar{m} \in [M]$ such that $\alpha_{\bar{m}} > \alpha_{\bar{m}}^*$ and $h_m$ is strictly decreasing for all $m$.
    Therefore, we can use the fact that the inverse of a strictly-decreasing function is strictly-decreasing (and therefore $h_m^{-1}$ is strictly-decreasing) to get that
    \begin{align*}
        \max_{m \in [M]} \alpha_m & = \max_{m \in [M]} h_m^{-1} (B)\\
        & > \max_{m \in [M]} h_m^{-1} (B')\\
        & = \max_{m \in [M]} \alpha_m^*.
    \end{align*}
    Also, Lemma \ref{function} tells us that $f(a) = g(h(a)) \geq M = f(a^*)$.
    Then, since $r(a)$ is strictly-increasing with respect to $f(a)$ and $\max_{m \in [M]} \alpha_m$, it follows that $r(a) > r(a^*) = U$.
    \item If there does not exist a positive real $B$ such that $h_m(\alpha_m) = B$ for all $m \in [M]$, then Lemma \ref{function} tells us that $f(a) = g(h(a)) > M = f(a^*)$.
    Also, by definition, $\max_{m \in [M]} \alpha_m \geq \max_{m \in [M]} \alpha_m^*$.
    Then, since $r(a)$ is strictly-increasing with respect to $f(a)$ and $\max_{m \in [M]} \alpha_m$, it follows that $r(a) > r(a^*) = U$.
\end{enumerate}

\end{proof}

\subsection{Numerical Experiment}\label{app:Appen4}
In this section, we discuss the numerical simulation results of Section \ref{num_exp} in detail. We first outline the algorithm and problem parameters for each setup, explain the steps to solve the underlying optimization problem in our simulation, and then provide a detailed interpretation of the results. 

The simulations were performed in a 3-dimensional action space with $M = 3$ agents, over a time horizon of $T = 30,000$ steps. The fixed constraint set is simplex $\mathit{S} = \{ \mathbf{x} \in \mathbb{R}^M: \mathbf{A}\mathbf{x} \leq \mathbf{b} \}$, where
\begin{equation}\label{A_num}
  \mathbf{A} = \begin{bmatrix}
  1 & 4 & 2\\ -1 & 0 & 0 \\ 0 & -1 & 0 \\ 0 & 0 & -1
\end{bmatrix},
\end{equation}
and 
\begin{equation}
  \mathbf{b} = \begin{bmatrix}
  \frac{1}{2}\\ \frac{1}{14}\\ \frac{1}{14} \\\frac{1}{14}
\end{bmatrix}.
\end{equation}

A linear Lipschitz function -  $f(\mathbf{y}) = \mathbf{c}^T\mathbf{y}$ for some $\mathbf{c} \in \mathbb R^3$ - was chosen as the reward function so that the optimal value lies at one of the vertices of the convex simplex.

In our first experiment, to observe the effect of the constraint set sharpness, the bandit parameter $\Theta_*$ and vector $\mathbf{c} = [0.8, 0.1, 0.1]$ were selected such that the optimal point is located at the sharpest vertex while satisfying $||\theta_{*,m}||_2\leq \frac{1}{2}$ and $||\mathbf{x}_m||_2\leq 2$ for all $m \in \{1, 2, 3\}$. The bandit parameter for the first agent was $\theta_{*,1} = [ 0 , 0 ,\frac{1}{2}]$ and for the other two agents, $\theta_{*,2} = \theta_{*,3} = [  -\frac{1}{13} , -\frac{1}{13} , -\frac{1}{13}]$.    

We then chose three privacy levels (defined in \eqref{privacy_level}) as $\frac{1}{4}, \frac{1}{2},$ and $1$, which are proportional to constraints parameters (Corollary \ref{P_S_T}). The base privacy parameters were $(\epsilon , \delta) = (2, 0.9)$ and given that the constraint set results in a response sensitivity of $1$, the base variance of the differential privacy noise for each agent was set to $\sigma = \frac{1}{2}\sqrt{2\ln(\frac{1.25}{0.9})}$ \cite{dwork2014algorithmic , zhao2019reviewing}. We then used the six different permutations of the chosen privacy levels as the privacy vectors for each setup, to compare the effects of privacy allocation and the shape of the constraint set. 

Additionally, the Sub-Gaussian parameter $\mathit{R}$ was chosen to be negligible, with a value of $0.001$, and the confidence parameter $\delta'$ and the regularization constant $\nu$ for least-squares estimation were set to $0.01$ and $0.1$ respectively. 

Given the reward function $f(\mathbf{y}) = \mathbf{c}^T\mathbf{y}$ with $\mathbf{c} \geq 0$, the following optimization must be solved to determine the agents' actions at time $t$: 

\begin{equation*}
            \mathbf{X}_t = \underset{ (\mathbf{X},\Theta) \in \mathcal{D}_{t} \times \mathcal{C}_{t}}{\argmax} \left( \sum_{m=1}^{3} c_m\theta_m^{T}\mathbf{x}_m \right).
\end{equation*}
 
Let $\mathbf{a}_m$ be the $m$'th column of matrix $\mathbf{A}$ in \eqref{A_num}. Then, for the set $(\mathbf{X},\Theta)$ such that $(\Theta\mathbf{X}, \Theta) \in \mathcal{S} \times \mathcal{C}_{t}$, it holds that:
\begin{equation*}
\sum_{j=1}^{3} \mathbf{a}_m\theta_m^{T}\mathbf{x}_m \leq b_i \quad \forall \Theta \in \mathcal{C}_{t},
\end{equation*}
which is equivalent to:
\[
\underset{\Theta \in \mathcal{C}_{t}}{\max} \left(\sum_{j=1}^{3} \mathbf{a}_{i,m}\theta_m^{T}\mathbf{x}_m \right) \leq b_i \quad \forall i \in \{1,2,3,4\},
\]
and
\[
\sum_{j=1}^{3} \underset{\Theta \in \mathcal{C}_{t}}{\max} \left(\mathbf{a}_{i,m}\theta_m^{T}\mathbf{x}_m \right) \leq b_i \quad \forall i \in \{1,2,3,4\}.
\]
Given that:
\begin{equation*}
   \underset{\Theta \in \mathcal{C}_{t}}{\max} \left(\mathbf{a}_{i,m}\theta_m^{T}\mathbf{x}_m \right) = \mathbf{a}_{i,m}\hat{\theta}_m^{T}\mathbf{x}_m + |\mathbf{a}_{i,m}|\beta_{t,m}\|\mathbf{x_m}\|_{\mathbf{G}_{t,m}^{-1}},
\end{equation*}
for all $i \in \{1,2,3,4\}$, the following holds: 

\begin{equation*}
   \sum_{j=1}^{3}\left(\mathbf{a}_{i,m}\hat{\theta}_m^{T}\mathbf{x}_m + |\mathbf{a}_{i,m}|\beta_{t,m}\|\mathbf{x_m}\|_{\mathbf{G}_{t,m}^{-1}}\right)\leq b_i.
\end{equation*}

 Thus, $\mathcal{D}_{t}$ can be defined as:

 \begin{equation}
     \left\{\mathbf{X}: 
\begin{aligned}
    & \|\mathbf{x_m}\|_2 \leq 2 \\
    & \sum_{j=1}^{3} \left(\mathbf{a}_{i,m} \hat{\theta}_m^{T}\mathbf{x}_m + |\mathbf{a}_{i,m}|\beta_{t,m}\|\mathbf{x_m}\|_{\mathbf{G}_{t,m}^{-1}} \right)\!\!\leq b_i \quad \!\!\!\!\forall i 
\end{aligned} 
\right\}.
 \end{equation}
 This definition was used to set the constraint for optimization problem in our simulation.
 
It follows that:

\begin{equation*}
\begin{aligned}
    \underset{ (\mathbf{X},\Theta) \in \mathcal{D}_{t} \times \mathcal{C}_{t}}{\max}& \left( \sum_{m=1}^{3} c_m\theta_m^{T}\mathbf{x}_m \right) \\ = \underset{ \mathbf{X} \in \mathcal{D}_{t}}{\max}&\left(\sum_{m=1}^{3} c_m \left(\underset{\Theta \in \mathcal{C}_{t}}{\max} \theta_m^{T}\mathbf{x}_m\right) \right) \\
    = \underset{ \mathbf{X} \in \mathcal{D}_{t}}{\max}&\left(\sum_{m=1}^{3} c_m \left(\hat{\theta}_m^{T}\mathbf{x}_m + \beta_{t,m}\|\mathbf{x_m}\|_{\mathbf{G}_{t,m}^{-1}}\right) \right) \\
    = \underset{ \mathbf{X} \in \mathcal{D}_{t}}{\max}&\left(\sum_{m=1}^{3} c_m \left(\hat{\theta}_m^{T}\mathbf{x}_m + \beta_{t,m}\|\mathbf{G}_{t,m}^{-\frac{1}{2}}\mathbf{x_m}\|_{2}\right) \right).
\end{aligned}
\end{equation*}

Relaxing the weighted norm to the infinity norm \cite{dani2008stochastic}, the non-convex optimization can be solved by solving $2dM$ convex optimizations, given that \( \|\mathbf{x}\|_{\infty} = \underset{i \in [d], \gamma \in \mathcal{E}}{\max} (\gamma x_i)\) \cite{cassel2022rate}, where $\mathcal{E} = \{-1,1\}$. Therefore,

\begin{equation*}
    \begin{aligned}  
    \underset{ \mathbf{X} \in \mathcal{D}_{t}}{\max}&\left(\sum_{m=1}^{3} c_m \left(\hat{\theta}_m^{T}\mathbf{x}_m +\beta_{t,m}\|\mathbf{G}_{t,m}^{-\frac{1}{2}}\mathbf{x_m}\|_{\infty}\right)\!\!\right)\!\!=\\ 
    \underset{\begin{array}{c}
        \gamma_m\in \mathcal{E} \\
        i_m \in [d] \\
        \forall m \in [M] \\     
    \end{array}}{\max}\!\!\!\underset{ \mathbf{X} \in \mathcal{D}_{t}}{\max}&\left(\sum_{m=1}^{3} c_m \left(\hat{\theta}_m^{T}\mathbf{x}_m +\gamma_m\beta_{t,m}(\mathbf{G}_{t,m}^{-\frac{1}{2}})_{i_m}\mathbf{x_m}\right) \!\!\right)
\end{aligned}
\end{equation*}
where $i_m$ is the row index of $\mathbf{G}_{t,m}^{-\frac{1}{2}}$.

Figure \ref{setup0_app} shows the regret performance of different privacy vectors when the optimal value lies at the sharpest corner of the constraint set. 


In all setups, the regret increases as the privacy level of the agent aligned with the optimal value rises. However, when this agent is the most private agent, the privacy allocations of the other two agents also affect the performance of the Safe-Private Lin-UCB algorithm. Therefore, a privacy allocation aligned with the constraint parameters can ensure better performance, which is consistent with the statement
of Theorem \ref{main_3}.
\begin{figure}[H]
 \centering
 \includegraphics[width=0.4\textwidth]{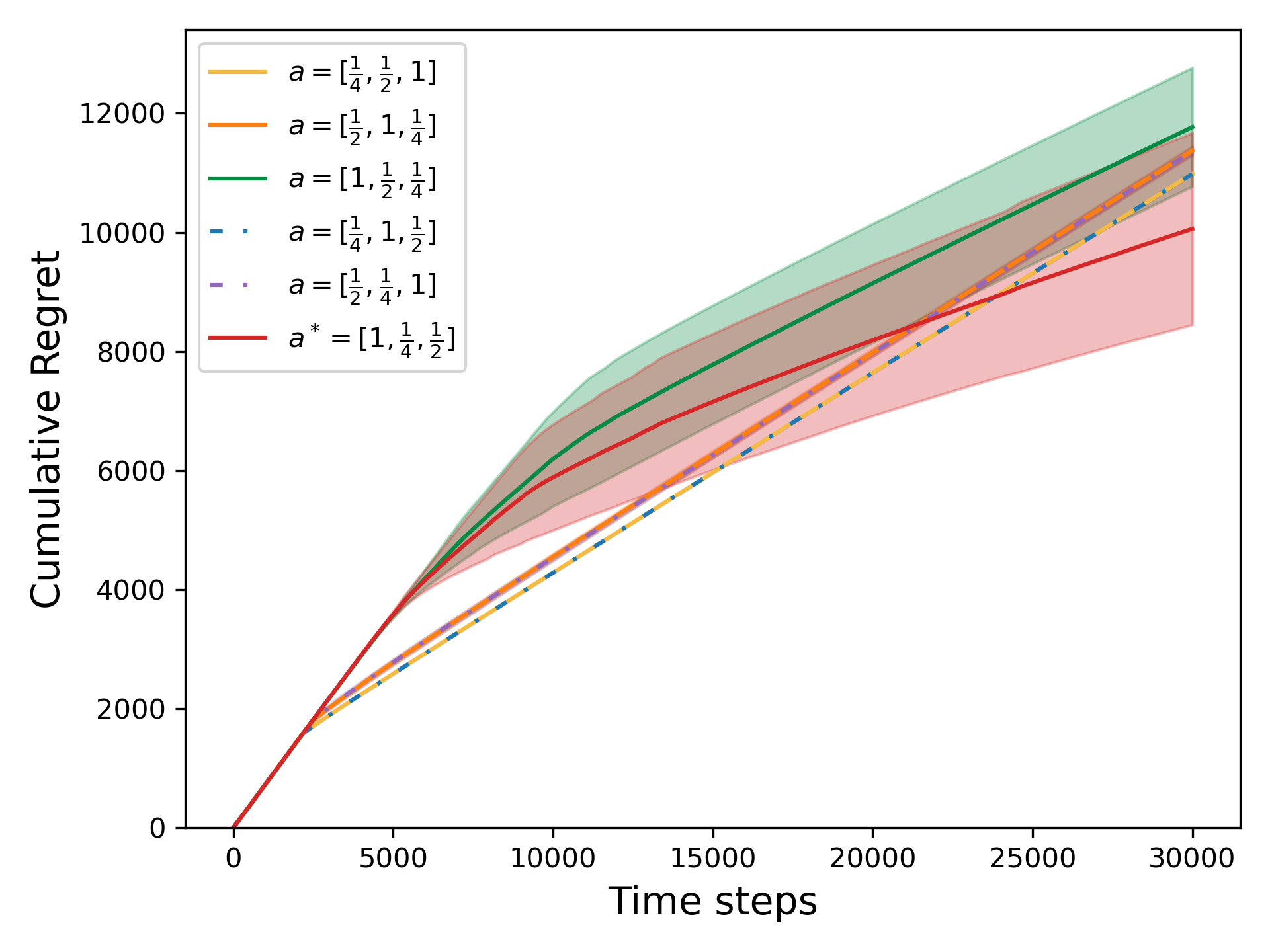}
 \caption{Cumulative regret for different privacy vectors when the optimal point lies at the sharpest vertex of the constraint set. The privacy vector $\mathit{a}^*$, as suggested in Theorem \ref{main_3}, shows better performance in terms of regret.}
 \label{setup0_app}
 \end{figure}

Additionally, since the optimal value can generally lie anywhere within the constraint set, we repeated the experiment with optimal points at different corners of the safe set $\mathcal{S}$. For one setup, the reward function vector is $\mathbf{c} = [0.1, 0.8, 0.1]$ with $\theta_{*,2} = [ 0 , 0 ,\frac{1}{2}]$ and $\theta_{*,1} = \theta_{*,3} = [  -\frac{1}{13} , -\frac{1}{13} , -\frac{1}{13}]$, while for the other setup, $\mathbf{c} = [0.1, 0.1, 0.8]$, with $\theta_{*,3} = [ 0 , 0 ,\frac{1}{2}]$ and $\theta_{*,1} = \theta_{*,2} = [  -\frac{1}{13} , -\frac{1}{13} , -\frac{1}{13}]$. 

Studying the performance of each privacy vector in all of these three setups can give us a better understanding of their overall performance, given that the learning process of the optimal value for each setup is impacted by the privacy level of the agents' responses aligned with the optimal value in that setup. Since the optimal values vary between the setups, to compare the regret, we first calculated the normalized regret for each setup, then averaged regret over the three setups that had the same privacy vectors but were different in optimal points. Figure \ref{norm_app} shows that $\mathit{a}^*$, as suggested in Theorem \ref{main_3}, performed better. Furthermore, among other privacy vectors, the worst case occurred when the privacy allocation order was exactly opposite to the constraint orders.

\begin{figure}[H]
 \centering
\includegraphics[width=0.4\textwidth]{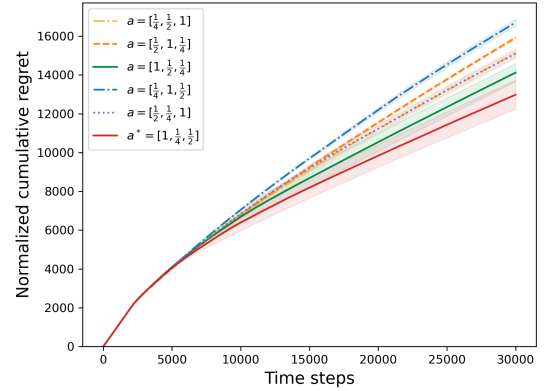}
 \caption{Average normalized cumulative regret over 3 different optimal point placement for each privacy vectors. The privacy vector $\mathit{a}^*$, as suggested in Theorem \ref{main_3}, shows better performance in terms of regret.}
 \label{norm_app}
 \end{figure}

\end{document}